\numberwithin{equation}{section}
\newtheorem{assumption}[theorem]{Assumption}
\newcommand{\ba}{\begin{array}}
\newcommand{\ea}{\end{array}}
\newcommand{\bit}{\begin{itemize}}
\newcommand{\eit}{\end{itemize}}
\newcommand{\be}{\begin{equation}}
\newcommand{\ee}{\end{equation}}
\newcommand{\bee}{\begin{equation*}}
\newcommand{\eee}{\end{equation*}}
\newcommand{\bea}{\begin{eqnarray}}
\newcommand{\eea}{\end{eqnarray}}
\newcommand{\st}{\mathrm{s.t.}}
\newcommand{\bs}{\mathrm{\bf s}}
\newcommand{\bx}{\mathbf{x}}
\newcommand{\bu}{\mathbf{u}}
\newcommand{\bv}{\mathbf{v}}
\newcommand{\bW}{\mathbf{W}}
\newcommand{\Rmn}[1]{\uppercase\expandafter{\romannumeral#1}}
\newcommand{\Rcal}{\mathcal{R}}
\newcommand{\Pcal}{\mathcal{P}}
\numberwithin{equation}{section}
\newcommand{\Mcal}{\mathcal{M}}
\newcommand{\grad}{\mathrm{grad}}
\newcommand{\R}{\mathbb{R}}
\numberwithin{theorem}{section}
\newcommand{\iprod}[2]{\left \langle #1, #2 \right \rangle }
\title{Decentralized projected Riemannian gradient method for smooth optimization on compact submanifolds}
\author{Kangkang Deng\thanks{ Department of Mathematics,  National University of Defense Technology, Changsha, 410073,
China (\email{freedeng1208@gmail.com}).}
\and Jiang Hu\thanks{Corresponding author. Massachusetts General Hospital and Harvard Medical School, Harvard University, Boston, MA 02114
(\email{hujiangopt@gmail.com}).}
}
\begin{document}

\maketitle

\begin{abstract}
We consider the problem of decentralized nonconvex optimization over a compact submanifold, where each local agent's objective function defined by the local dataset is smooth. Leveraging the powerful tool of proximal smoothness, we establish local linear convergence of the projected gradient descent method with a unit step size for solving the consensus problem over the nonconvex compact submanifold. This serves as the basis for designing and analyzing decentralized algorithms on manifolds. Subsequently, we propose two decentralized methods: the decentralized projected Riemannian gradient descent (DPRGD) and the decentralized projected Riemannian gradient tracking (DPRGT).  We establish their convergence rates of $\mathcal{O}(1/\sqrt{K})$ and $\mathcal{O}(1/K)$, respectively, to reach a stationary point. To the best of our knowledge, DPRGT is the first decentralized algorithm to achieve exact convergence for solving decentralized optimization over a compact submanifold. Beyond the linear convergence results on the consensus, two key tools developed in the proof are the Lipschitz-type inequality of the projection operator and the Riemannian quadratic upper bound for smooth functions on the compact submanifold, which could be of independent interest. Finally, we demonstrate the effectiveness of our proposed methods compared to state-of-the-art ones through numerical experiments on eigenvalue problems and low-rank matrix completion.
\end{abstract}
\begin{keywords}
decentralized optimization, Riemannian manifold, consensus, Lipschitz-type inequalities, gradient tracking
\end{keywords}

\begin{AMS}
 65K05, 65K10, 90C05, 90C26, 90C30
\end{AMS}

\section{Introduction}

Decentralized optimization has gained significant attention in recent years due to its potential applications in large-scale distributed systems, such as sensor networks, distributed computing systems, and machine learning. In these systems, data is often distributed across multiple agents or nodes, and a centralized optimization approach may not be feasible due to issues such as privacy concerns and limited computational resources. In this paper, we consider the distributed smooth optimization over a compact submanifold
\be \label{prob:original}
\begin{aligned}
  \min \quad & \frac{1}{n}\sum_{i=1}^n f_i(x_i), \\
  \st \quad & x_1 = \cdots = x_n, \;\; x_i \in \Mcal, \;\; \forall i=1,2,\ldots, n,
\end{aligned}
\ee
where $n$ is the number of agents, $f_i$ is the local objective at the $i$-th agent, and $\Mcal$ is a compact smooth submanifold of $\R^{d\times r}$, e.g., the Stiefel manifold ${\rm St}(d,r):=\{ x \in \R^{d\times r} : x^\top x = I_r \}$. Problem \eqref{prob:original} widely exists in machine learning, signal processing, and deep learning, see, e.g., the principal component analysis \cite{ye2021deepca}, the low-rank matrix completion \cite{boumal2015low,kasai2019riemannian}, the low-dimension subspace learning \cite{ando2005framework,mishra2019riemannian}, and the deep neural networks with batch normalization \cite{cho2017riemannian,hu2022riemannian}. 

Decentralized optimization in the Euclidean space (i.e., $\Mcal = \R^{d\times r}$) has been extensively studied over the past few decades. The decentralized (sub)-gradient descent (DGD) method \cite{tsitsiklis1986distributed,nedic2009distributed,yuan2016convergence} provides a straightforward approach to combine local gradient update and consensus update. However, the analysis therein indicates that DGD can not converge to a stationary point (i.e., exact convergence) when fixed step sizes are used. When $f_i$'s are convex, to achieve the exact convergence, the local historic information is investigated in various papers, such as EXTRA \cite{shi2015extra}, DLM \cite{ling2015dlm}, exact diffusion \cite{yuan2018exact}, NIDS \cite{li2019decentralized}, and the gradient tracking-type methods \cite{xu2015augmented,qu2017harnessing}. The exact convergence is shown with the use of fixed step sizes. If the objective functions are nonconvex, there are also extensive studies, see, e.g., \cite{bianchi2012convergence,di2016next,tatarenko2017non,wai2017decentralized,hong2017prox,zeng2018nonconvex,scutari2019distributed,sun2020improving}.  One can also regard problem \eqref{prob:original} as a decentralized composite optimization problem, where the nonsmooth part is the indicator function of the manifold. Then, decentralized proximal gradient-type algorithms can be applied if the projection to the manifold is available. Note that the previous studies, e.g., \cite{bianchi2012convergence,di2016next,zeng2018nonconvex}, require the convexity of the regularizer or at least the convexity of its domain to define the average point and achieve the consensus. 
However, since the manifold $\Mcal$ is nonlinear and nonconvex, these works will fail when applied to solve problem \eqref{prob:original}.

As a special case of problem \eqref{prob:original}, the Riemannian consensus
problem (defined in \eqref{prob:consensus}) is well-studied \cite{tron2012riemannian,sarlette2009consensus,markdahl2020high,chen2021local}. The analysis of decentralized gradient algorithms for solving \eqref{prob:original} often relies on the linear convergence of consensus iteration for solving such consensus problem.
In the Euclidean setting, due to its linear and convex structure, the linear convergence of the consensus can be straightforwardly obtained by applying the (projected) gradient method.
As explained above, the nonconvex nature of the problem in \eqref{prob:original} significantly distinguishes it from Euclidean decentralized composite optimization problems, thereby making consensus construction and algorithmic design more challenging. To address the nonconvexity of the manifold $\Mcal$, the authors \cite{tron2012riemannian} introduce a Riemannian consensus based on the geodesic distance on the manifold. However, to solve the resulting consensus problem, expensive operations such as exponential maps and vector transports are required. Recently, for the case where $\Mcal$ is the Stiefel manifold, a more tractable consensus has been defined in \cite{chen2021local} by using the Euclidean distance. By investigating the restricted secant inequality, they show that the Riemannian gradient method will converge linearly to the consensus points when multiple-step consensuses are used in each iteration. However, for general compact submanifolds, these results are not applicable because they rely on the spectral properties specific to orthogonal matrices.

Building on the consensus results mentioned earlier, several decentralized Riemannian algorithms have been developed \cite{shah2017distributed,mishra2019riemannian}. However, these methods require an asymptotically infinite number of consensus steps for convergence, which limits their practical applicability. For the case where $\Mcal$ is the Stiefel manifold, the authors \cite{chen2021decentralized} propose a decentralized Riemannian gradient descent method and its gradient-tracking version. 
When the local objective function is of negative log-probability type, a decentralized Riemannian natural gradient method is presented in \cite{hu2023decentralized}. To use a single step of consensus, augmented Lagrangian methods \cite{wang2022decentralized,wang2022variance} are also investigated, where a different stationarity is used. However, these studies rely on the orthogonal structure of the Stiefel manifold. 

\subsection{Contribution} 
This paper involves proving the linear convergence of consensus error and, based on this, developing two decentralized gradient-type algorithms to solve \eqref{prob:original}. We summarize our contributions as follows: 
\begin{itemize}
    \item \textbf{Linear consensus of the projected gradient method.} We present an Euclidean distance-based consensus problem as minimizing a quadratic function over smooth submanifolds (defined in \eqref{prob:consensus}). To address the inherent nonconvexity from the submanifold constraint, we leverage the powerful tool of proximal smoothness from variational analysis, establishing the linear convergence of the projected gradient descent within a neighborhood with explicit characterization (see Section \ref{sec:linear-consensus}). Such results are instrumental for analyzing decentralized manifold optimization algorithms. Compared to the results for the Stiefel manifold \cite{chen2021local}, our analysis allows the use of the unit step size and is applicable to a broader range of compact smooth submanifolds. Notably, our approach can be extended to other nonconvex sets or functions, such as strongly prox-regular functions \cite{hu2022projected}.
    
    \item \textbf{Derivation of essential equalities on the compact submanifold.}     
    We derive a Riemannian quadratic upper bound for differentiable functions on compact submanifolds with locally Lipschitz continuous gradients. This serves as a key tool for analyzing the convergence of Riemannian gradient-type algorithms, paralleling its counterpart in the Euclidean space. Using the proximal smoothness, we present a Lipschitz-type inequality for the projection operator on compact manifolds, analogous to the Lipschitz-type inequalities of the retraction operator in \cite{boumal2019global}, playing a central role in analyzing projected gradient methods. 

    \item \textbf{The first decentralized algorithms for solving \eqref{prob:original} with exact convergence.} 
    Building upon the linear consensus of the projected gradient method, we propose a decentralized projected Riemannian gradient descent method (Algorithm \ref{alg:drpgd}) for solving \eqref{prob:original}, where each step combines a projected gradient step for the consensus and a Riemannian gradient step for $f$. We show that the iteration complexity of obtaining an $\epsilon$-stationary point (see Definition \ref{def:station}) is $\mathcal{O}(\epsilon^{-2})$ (see Theorem \ref{thm:dprgd}). To achieve exact convergence with a constant step size, we propose a decentralized projected Riemannian gradient tracking method (Algorithm \ref{alg:drgta}) and establish an improved iteration complexity of $\mathcal{O}(\epsilon^{-1})$ (see Theorem \ref{thm:dprgt}). Numerical results on (generalized) eigenvalue problems and low-rank matrix completion demonstrate the effectiveness of our proposed method compared with state-of-the-art ones. 
\end{itemize}

\subsection{Notation}
Let $\mathbf{1}_n\in \mathbb{R}^n$ be a vector of all entries one. Define $J := \frac{1}{n}\mathbf{1}_n\mathbf{1}_n^{\top}$.   Let $\bx := [x_1^\top, \cdots, x_n^\top]^\top$ denote the collection of all local variables $x_i$. 
 Define $f(\bx): = \frac{1}{n}\sum_{i=1}^n f_i(x_i)$. Let $\bW^t := W^t \otimes I_d \in \R^{nd \times nd}$, where $t$ is a positive integer and $\otimes$ denotes the Kronecker product. For the compact submanifold $\Mcal$ of $\mathbb{R}^{d \times r}$, we always take the Euclidean metric $\iprod{\cdot}{\cdot}$ as the Riemannian metric. We use $\|\cdot \|$ to denote the Euclidean norm. We denote the $n$-fold Cartesian product of $\Mcal$ with itself as $\Mcal^n = \Mcal \times \cdots \times \Mcal$, and use $[n] := \{1, 2, \cdots, n\}$.
For any $x\in \Mcal$, the tangent space and normal space of $\Mcal$ at $x$ are denoted by $T_x\Mcal$ and $N_x\Mcal$, respectively. We define $\nabla f(x)$ as the Euclidean gradient of $f$, and $\grad f(x)$ as the Riemannian gradient of $f$.

\section{Preliminary}
 We define the distance and the nearest-point projection of a point $y \in \mathbb{R}^{d\times r}$ onto $\Mcal$ by
 $$
 \text{dist}(x,\Mcal) : = \inf_{y\in \Mcal} \| y - x\|, ~~ \text{and}~~ \Pcal_{\mathcal{M}}(x) : = \arg\min_{y\in \mathcal{M}} \| y - x\|,
 $$
 respectively. For any real number $R >0$, we define the $R$-tube around $\mathcal{M}$ to be the set:
 $$
 U_{\mathcal{M}}(R): = \{x:~   \text{dist}(x,\mathcal{M}) < R\}.
 $$
We say a closed set $\mathcal{M}$ is $R$-proximally smooth if the projection $\Pcal_{\mathcal{M}}(x)$ is a singleton whenever $\text{dist}(x,\mathcal{M}) < R$. Following \cite{clarke1995proximal}, an $R$-proximally smooth set $\mathcal{M}$ satisfies that: 
\begin{itemize}
    \item[(i)] For any real $\gamma\in (0,R)$, the estimate holds:
    \be \label{eq:lip-proj}
    \left\| \Pcal_{\mathcal{M}} (x) -\Pcal_{\mathcal{M}} (y)\right\| \leq \frac{R}{R-\gamma}\|x - y\|,~~ \forall x,y \in \bar{U}_{\mathcal{M}}(\gamma),
    \ee
    where $\bar{U}_{\Mcal}(\gamma):=\{x: {\rm dist}(x, \Mcal) \leq \gamma \}$.
    \item[(ii)] For any point $x\in \mathcal{M}$ and a normal $v\in N_x\Mcal$, the following inequality holds for all $y\in\Mcal$:
    \be \label{proximally-0}
    \iprod{v}{y-x} \leq \frac{\|v\|}{2R}\|y-x\|^2.
    \ee
\end{itemize}
It is shown that any compact $C^2$-submanifolds in Euclidean space belong to proximally smooth set \cite{clarke1995proximal,balashov2021gradient,davis2020stochastic}. For example, the Stiefel manifold is a $1$-proximally smooth set \cite{balashov2021gradient}.  
As we will see later, those properties of the proximally smooth set will be crucial for the design and analysis of the algorithms. 
 
In the design of manifold optimization algorithms \cite{absil2009optimization,boumal2023introduction,hu2020brief,huang2017intrinsic}, a key concept is the retraction operator. A smooth mapping $\Rcal:T\mathcal{M}:=\cup_{x \in \Mcal} T_{x} \Mcal \rightarrow \Mcal$ is called a retraction operator if  
\begin{itemize}
	\item $\Rcal_{x}(0) = x$,
	\item $\mathrm{D}\Rcal_{x}(0)[\xi]:=\frac{\mathrm{d}}{\mathrm{d}t} \Rcal_{x}(t\xi) \mid_{t=0} = \xi$, for all $\xi \in T_{x}\Mcal$. 
\end{itemize}
Note that the retraction operator may not be unique, e.g., the exponential maps \cite{absil2009optimization}, (certain types of) projections \cite{absil2012projection}.   
The following Lipschitz-type inequality of the retraction operator plays an important role in the analysis of retraction-based methods.
\begin{proposition}[\cite{boumal2019global}] \label{eq:lip-retr}
	Let $\mathcal{M}$ be a compact  submanifold of $\mathbb{R}^{d \times r}$. For all $x \in \mathcal{M}$ and $\xi \in T_{x} \mathcal{M}$, there exists a constant $M_1 >0$ such that the following inequality holds:
	\be \label{eq:bound-retraction1} \|\Rcal_{x}(\xi) - x - \xi \| \leq M_1 \|\xi \|^2, \; \forall x
	\in \Mcal, \; \forall \xi \in T_{x} \Mcal. \ee
\end{proposition}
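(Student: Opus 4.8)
The plan is to reduce the statement to a compact piece of the tangent bundle and then run a second-order Taylor expansion of the retraction along rays through the origin, using compactness to force the constant to be uniform.

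First I would dispose of large $\xi$ essentially for free. Since $\Mcal$ is compact it is bounded, so with $\delta := \mathrm{diam}(\Mcal)$ one has, whenever $\|\xi\| \ge 1$,
\[
\|\Rcal_{x}(\xi) - x - \xi\| \le \|\Rcal_{x}(\xi) - x\| + \|\xi\| \le \delta + \|\xi\| \le (\delta+1)\,\|\xi\|^2 ,
\]
because $\Rcal_{x}(\xi), x \in \Mcal$ and $\|\xi\| \le \|\xi\|^2$. Hence it suffices to prove the bound on the compact set $\Kcal := \{(x,\xi) : x \in \Mcal,\ \xi \in T_{x}\Mcal,\ \|\xi\| \le 1\}$, which is a compact subset of $T\Mcal$. (If one does not wish to assume $\Rcal$ is globally defined, the first genuine step is instead to extract, from smoothness of $\Rcal$ near the zero section together with compactness of $\Mcal$, a uniform radius $\rho \in (0,1]$ on which every $\Rcal_{x}$ is defined and twice continuously differentiable, and then to argue on $\{\|\xi\| \le \rho\}$; nothing else changes.)

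Next, for $(x,\xi) \in \Kcal$ with $\xi \ne 0$ I would write $\xi = \|\xi\|\,u$ with $\|u\| = 1$ and study the ambient-space-valued curve $\psi(s) := \Rcal_{x}(su)$ for $s \in [0,\|\xi\|] \subseteq [0,1]$. The two defining properties of a retraction give $\psi(0) = \Rcal_{x}(0) = x$ and $\psi'(0) = \mathrm{D}\Rcal_{x}(0)[u] = u$, and $\psi$ is $C^2$ by smoothness of $\Rcal$, so Taylor's formula with integral remainder yields $\Rcal_{x}(\xi) = x + \xi + \int_0^{\|\xi\|}(\|\xi\|-s)\,\psi''(s)\,\mathrm{d}s$, whence $\|\Rcal_{x}(\xi) - x - \xi\| \le \tfrac12\|\xi\|^2 \sup_{0\le s\le\|\xi\|}\|\psi''(s)\|$. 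To close the argument I would observe that $\psi''(s)$ is the second $s$-derivative of the jointly smooth map $(x,u,s)\mapsto \Rcal_{x}(su)$, hence a continuous function of $(x,u,s)$ on the compact set $\{(x,u): x\in\Mcal,\ u\in T_{x}\Mcal,\ \|u\|=1\}\times[0,1]$, and therefore bounded there by some $C_0 < \infty$; then $M_1 := \max\{C_0/2,\ \delta+1\}$ works.

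The only real obstacle is the uniformity of $M_1$ in both the base point $x$ and the direction of $\xi$ — the pointwise estimate $\Rcal_{x}(\xi) = x + \xi + O(\|\xi\|^2)$ is immediate from the retraction axioms. The rescaling $\xi = \|\xi\|\,u$ is the device that converts the desired quadratic decay into a plain boundedness statement for $\psi''$, after which compactness of $\Mcal$ (equivalently, of its unit tangent bundle) finishes the job; the only mild technical point is bookkeeping of the domain of $\Rcal$ when it is only locally defined.
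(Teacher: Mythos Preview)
The paper does not prove this proposition at all: it is quoted verbatim as a known result from \cite{boumal2019global} and used as a black box later (in the proof of Lemma~\ref{lemma-project}). So there is no ``paper's own proof'' to compare against.

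Your argument is correct and is essentially the standard proof that appears in the cited reference. The two ingredients---dealing with large $\xi$ trivially via the diameter of $\Mcal$, and on a compact piece of the tangent bundle writing $\Rcal_x(su)$ as a $C^2$ curve, applying Taylor with integral remainder, and bounding the second derivative uniformly by compactness of the unit tangent bundle---are exactly what Boumal does. Your remark about possibly shrinking to a uniform radius $\rho$ when $\Rcal$ is only locally defined is the right caveat; nothing is missing.
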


Let $x_1,\cdots,x_n\in \Mcal$ represent the local copies of each agent. We denote $\hat{x}$ as the Euclidean average point of $x_1,\cdots,x_n$ given by
\bee
\hat{x}: = \frac{1}{n} \sum_{i=1}^n x_i.
\eee
Let $\Pcal_{\Mcal}$ be the orthogonal projection onto $\Mcal$. Note that for $\{x_i\}_{i=1}^n \subset \Mcal$,
\[ {\rm argmin}_{y\in\Mcal}\sum_{i=1}^n \|y - x_i\|^2 = \Pcal_{\Mcal}(\hat{x}). \]
Any element $\bar{x}$ in $\Pcal_{\Mcal}(\hat{x})$ is the induced arithmetic mean of $\{x_i\}_{i=1}^n$ on $\Mcal$ \cite{sarlette2009consensus}.
Let $f(z) : = \frac{1}{n}\sum_{i=1}^n f_i(z)$. The $\epsilon$-stationary point of problem \eqref{prob:original} is defined as follows.
\begin{definition} \label{def:station}
The set of points $\{x_1,x_2,\cdots,x_n\} \subset \Mcal$ is called an $\epsilon$-stationary point of \eqref{prob:original} if there exists a $\bar{x} \in \Pcal_{\Mcal}(\hat{x})$ such that
\[ \frac{1}{n}\sum_{i=1}^n \| x_i - \bar{x}\|^2 \leq \epsilon \quad {\rm and} \quad \|\grad f(\bar{x})\|^2 \leq \epsilon. \]
\end{definition}
In the following development, we always assure that $\hat{x} \in \bar{U}_{\Mcal}(\gamma)$. Consequently, $\Pcal_{\Mcal}(\hat{x})$ is a singleton and we have $\bar{x} = \Pcal_{\Mcal}(\hat{x})$. 

\section{Consensus problem over a compact submanifold}  \label{sec:linear-consensus}
To achieve the stationarity given in Definition \ref{def:station}, it is necessary to consider consensus on the compact submanifold $\Mcal$. However, the geodesic distance-based consensus problem \cite{tron2012riemannian} involves logarithm mapping and is challenging to solve. To address this, we consider the following consensus problem over $\Mcal$: 
\be \label{prob:consensus}
\min_{\bx} \phi^t(\bx): = \frac{1}{4} \sum_{i=1}^n \sum_{j=1}^n W^t_{ij}\|x_i - x_j\|^2,~ \text{s.t.}~x_i \in \Mcal, i\in [n].
\ee
The gradient of $\phi^t(\bx)$ is $\nabla \phi^t(\bx): = [\nabla \phi_1^t(\bx)^T, \nabla \phi_2^t(\bx)^T, \cdots, \nabla \phi_n^t(\bx)^T ]^T = (I_{nd} - \bW^t) \bx$, where $\nabla \phi_i^t(\bx): = x_i - \sum_{j=1}^n W_{ij}^tx_{j,k}, i \in [n]$. We use the following standard assumptions on $W$, see, e.g., \cite{zeng2018nonconvex,chen2021decentralized}.
Denote by the undirected agent network $G:=\{ \mathcal{V}, \mathcal{E}\}$, where $\mathcal{V}=\{1,2,\ldots, n\}$ is the set of all agents and $\mathcal{E}$ is the set of edges. Let $W$ be the adjacency matrix of $G$. Then $W_{ij} = W_{ji}$ and $W_{ij} > 0$ if an edge $(i,j) \in \mathcal{E}$ and otherwise $W_{ij} = 0$.
\begin{assumption} \label{assum-w}
     We assume that the undirected graph $G$ is connected and $W$ is doubly stochastic, i.e., (i) $W=W^{\top}$; (ii) $W_{i j} \geq 0$ and $1>W_{i i}>0$ for all $i, j$; (iii) Eigenvalues of $W$ lie in $(-1,1]$. The second largest singular value $\sigma_2$ of $W$ lies in $\sigma_2 \in[0,1)$.
\end{assumption}

Let us denote $\bar{\bx}_k = \mathbf{1}_n \otimes \bar{x}_k$ and $\hat{\bx}_k = \mathbf{1}_n \otimes \hat{x}_k$. For the Euclidean case (i.e., $\Mcal = \R^{d\times r}$), the gradient descent method with a unit step size has a locally linear convergence rate, in which the $k$-th iteration is given by $\bx_{k+1} = \bx_k - \nabla \phi^t(\bx_k)$. In fact, we have
\[ \begin{aligned}
    \| \bx_{k + 1} - \bar{\bx}_{k+1} \| & \leq \| \bx_{k+1} - \bar{\bx}_k \| = \| \bx_k - (I_{nd} - \bW^t)\bx_k - \bar{\bx}_k\| \\
    & = \| (\bW^t - J)(\bx_k - \bar{\bx}_k) \| \leq \sigma_2^t \|\bx_k - \bar{\bx}_k\|. 
\end{aligned} \]
When $\Mcal$ is the Stiefel manifold, the iterates generated by the Riemannian gradient descent method converge linearly under suitable step size and initialization, as shown in \cite[Theorem 2]{chen2021local}. However, it is not clear whether such linear convergence result holds for general compact submanifolds. Note that the linear convergence of (Riemannian) gradient methods is crucial in analyzing the convergence of decentralized gradient-type methods \cite{sun2020improving,chen2021decentralized}. This raises the question of whether we can design a gradient-type algorithm to solve \eqref{prob:consensus} with a locally linear convergence rate.

To this end, we consider the projected gradient method with a unit step size for solving \eqref{prob:consensus}, namely,
\be \label{eq:consensus-pg-iter}
x_{i,k+1} = \Pcal_{\Mcal}\left(\sum_{j=1}^n W_{ij}^tx_{j,k} \right),\quad i\in [n].
\ee
The main contribution of this section is to show the locally linear convergence of the scheme \eqref{eq:consensus-pg-iter} with an explicit characterization of the local neighborhood. The main technique utilized is the proximal smoothness of the compact submanifold $\Mcal$ \cite{clarke1995proximal}. Without loss of generality, we assume that $\Mcal$ is $2\gamma$-proximally smooth. By \eqref{eq:lip-proj} and \eqref{proximally-0}, the projection operator $\Pcal_{\Mcal}(\cdot)$ has the following properties:
  \begin{align}
\left\| \Pcal_{\mathcal{M}} (x) -\Pcal_{\mathcal{M}} (y)\right\| & \leq 2\|x - y\|,~~ \forall x,y \in \bar{U}_{\Mcal}(\gamma), \label{proximally-1} \\
 \left<v, y-x\right> &\leq \frac{\|v\|}{4\gamma}\|y-x\|^2,~ \forall x,y\in \Mcal, v\in N_x\Mcal. \label{proximally-2}
\end{align}
We will use the above inequality to characterize the locally linear convergence of the projected gradient method and the associated neighborhood.

Let us define a neighborhood around $\bx\in \Mcal^n$ and a constant $\zeta$ as follows:
\begin{equation}\label{def:N-neiborhood}
    \mathcal{N}:=\{ \bx\in \Mcal^n: \max_i\|x_i - \bar{x} \| \leq \gamma/2 \},\;\; \zeta: = \max_{x, y\in\Mcal}\|x - y\|.
\end{equation}
For a more general iterate scheme, $x_{i,k+1} = \Pcal_{\Mcal}\left(\sum_{j=1}^n W_{ij}^tx_{i,k} - \alpha_k u_{i,k} \right)$, where $\alpha_k > 0$ and $u_{i,k} \in \R^{d\times r}$, the following lemma demonstrates that the iterates ${\bx_{k+1}}$ will remain in the neighborhood $\mathcal{N}$, provided that $t$, $\alpha_k$, and $u_{i,k}$ satisfy certain conditions and $\bx_k \in \mathcal{N}$.


\begin{lemma} \label{lem:stay-neighborhood}
Let $x_{i,k+1} = \Pcal_{\Mcal}\left(\sum_{j=1}^n W_{ij}^tx_{i,k} - \alpha_k  u_{i,k} \right)$. Suppose that Assumption \ref{assum-w} holds.
If $\bx_k \in \mathcal{N}$, $\| u_{i,k} \| \leq B$,  $\alpha_k \leq \gamma/(24B)$, and $t \geq  \left\lceil \log_{\sigma_2}\left(\frac{\gamma}{24\sqrt{n}\zeta }\right) \right \rceil$ with $\zeta$ given in \eqref{def:N-neiborhood}, then it holds that $\bx_{k+1}\in \mathcal{N}$ and 
\begin{align}
    \sum_{j=1}^n {W_{ij}^t} x_{j,k} -\alpha_k u_{i,k} & \in \bar{U}_{\Mcal} (\gamma), ~ i=1,\cdots,n. \label{eq:neibohood1}
\end{align}
\end{lemma}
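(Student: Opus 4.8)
The plan is to handle the two conclusions separately, the decisive observation being that the pre‑projection point $y_{i,k}:=\sum_{j=1}^n W_{ij}^t x_{j,k}-\alpha_k u_{i,k}$ (so that $x_{i,k+1}=\Pcal_{\Mcal}(y_{i,k})$) should be compared against the \emph{Euclidean} average $\hat{x}_k$ rather than the induced mean $\bar{x}_k$: the difference between $\sum_j W_{ij}^t x_{j,k}$ and $\hat{x}_k$ is entirely the mixing error, of size $O(\sigma_2^t)$, whereas its difference with $\bar{x}_k$ additionally carries the term $\hat{x}_k-\bar{x}_k$, which is only $O(\gamma)$‑small. Since $W$ is doubly stochastic, so is $W^t$; hence each row of $W^t$ sums to one, $J\bx_k=\hat{\bx}_k$, and $(\bW^t-J)\bar{\bx}_k=0$. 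From $\bx_k\in\mathcal{N}$ one immediately gets $\|\hat{x}_k-\bar{x}_k\|\le\frac1n\sum_i\|x_{i,k}-\bar{x}_k\|\le\gamma/2$, so $\hat{x}_k\in\bar{U}_{\Mcal}(\gamma)$ and $\bar{x}_k=\Pcal_{\Mcal}(\hat{x}_k)$ is the singleton projection.

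The algebraic core is that the $i$th $d\times r$ block of $\bW^t\bx_k-\hat{\bx}_k=(\bW^t-J)\bx_k=(\bW^t-J)(\bx_k-\bar{\bx}_k)$ is exactly $\sum_j W_{ij}^t x_{j,k}-\hat{x}_k$, so
\[ y_{i,k}-\hat{x}_k=[(\bW^t-J)(\bx_k-\bar{\bx}_k)]_i-\alpha_k u_{i,k}. \]
Taking norms and using $\|(\bW^t-J)(\bx_k-\bar{\bx}_k)\|\le\sigma_2^t\|\bx_k-\bar{\bx}_k\|$, $\|\bx_k-\bar{\bx}_k\|\le\sqrt{n}\zeta$ (all components lie in $\Mcal$), the choice $t\ge\lceil\log_{\sigma_2}(\gamma/(24\sqrt{n}\zeta))\rceil$ (which forces $\sigma_2^t\le\gamma/(24\sqrt{n}\zeta)$), and $\alpha_k\|u_{i,k}\|\le\alpha_kB\le\gamma/24$, yields $\|y_{i,k}-\hat{x}_k\|\le\gamma/24+\gamma/24=\gamma/12$. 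Combining with $\|\hat{x}_k-\bar{x}_k\|\le\gamma/2$ gives $\mathrm{dist}(y_{i,k},\Mcal)\le\|y_{i,k}-\bar{x}_k\|\le\gamma/12+\gamma/2<\gamma$, which is precisely \eqref{eq:neibohood1}.

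For $\bx_{k+1}\in\mathcal{N}$: since $y_{i,k},\hat{x}_k\in\bar{U}_{\Mcal}(\gamma)$, the Lipschitz estimate \eqref{proximally-1} applied with anchor $\hat{x}_k$ (and $\Pcal_{\Mcal}(\hat{x}_k)=\bar{x}_k$) gives $\|x_{i,k+1}-\bar{x}_k\|=\|\Pcal_{\Mcal}(y_{i,k})-\Pcal_{\Mcal}(\hat{x}_k)\|\le2\|y_{i,k}-\hat{x}_k\|\le\gamma/6$. Averaging over $i$, $\|\hat{x}_{k+1}-\bar{x}_k\|\le\gamma/6$, so $\hat{x}_{k+1}\in U_{\Mcal}(2\gamma)$ and $\bar{x}_{k+1}=\Pcal_{\Mcal}(\hat{x}_{k+1})$ is well defined; using $\bar{x}_k\in\Mcal$ as a competitor, $\|\hat{x}_{k+1}-\bar{x}_{k+1}\|=\mathrm{dist}(\hat{x}_{k+1},\Mcal)\le\|\hat{x}_{k+1}-\bar{x}_k\|\le\gamma/6$, hence $\|\bar{x}_{k+1}-\bar{x}_k\|\le\gamma/3$. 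A final triangle inequality then gives $\|x_{i,k+1}-\bar{x}_{k+1}\|\le\|x_{i,k+1}-\bar{x}_k\|+\|\bar{x}_k-\bar{x}_{k+1}\|\le\gamma/6+\gamma/3=\gamma/2$, i.e.\ $\bx_{k+1}\in\mathcal{N}$.

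The one genuinely nontrivial step is the first: recognizing that $\hat{x}_k$, not $\bar{x}_k$, is the right reference point for the projection‑Lipschitz bound, which collapses $\|x_{i,k+1}-\bar{x}_k\|$ to $O(\gamma)\cdot\sigma_2^t+O(\alpha_kB)$. After that the argument is pure bookkeeping with triangle inequalities; the only care points are (a) verifying that $\hat{x}_{k+1}$ remains inside the $2\gamma$‑tube so that $\bar{x}_{k+1}$ is a genuine singleton, and (b) keeping the accumulated constants below $\gamma/2$ and $\gamma$ — which is exactly what the $1/24$ factors in the hypotheses on $\alpha_k$ and $t$, together with $2\gamma$‑proximal smoothness (giving the Lipschitz constant $2$ in \eqref{proximally-1}), are calibrated to ensure.
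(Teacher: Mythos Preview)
Your proof is correct and follows essentially the same approach as the paper's: compare the pre-projection point to $\hat{x}_k$, bound the mixing error by $\sqrt{n}\sigma_2^t\zeta$, apply the $2$-Lipschitz property \eqref{proximally-1} of $\Pcal_{\Mcal}$ to obtain $\|x_{i,k+1}-\bar{x}_k\|\le\gamma/6$, and finish with triangle inequalities to reach $\|x_{i,k+1}-\bar{x}_{k+1}\|\le\gamma/2$. The only cosmetic difference is that the paper bounds the mixing term via the row-wise total-variation estimate $\sum_j|W_{ij}^t-1/n|\le\sqrt{n}\sigma_2^t$ (citing Markov-chain references), whereas you use the spectral bound on the stacked operator $\bW^t-J$ applied to $\bx_k-\bar{\bx}_k$; both yield the same constant and the rest of the bookkeeping coincides.
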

\begin{proof}
Note that for any $i\in [n]$,
\[ \begin{aligned}
&\| \sum_{j=1}^n W^t_{ij} x_{j,k} + \alpha_k u_{i,k} - \bar{x}_k  \|
\leq    \| \sum_{j=1}^n W^t_{ij} x_{j,k} + \alpha_k u_{i,k} - \hat{x}_k  \| + \| \hat{x}_k - \bar{x}_k \| \\
& \leq  \| \sum_{j=1}^n (W^t_{ij} - \frac{1}{n})(x_{j,k} - \hat{x}_k) \| + \alpha_k B +\max_i\| \hat{x}_k - x_{i,k} \|\\
& \leq    \sum_{j=1}^n \left|W^t_{ij} - \frac{1}{n}\right|\|x_{j,k} - \hat{x}_k\| + \alpha_k B + \frac{1}{2} \gamma \\
& \leq  \zeta \max_i  \sum_{j=1}^n \left|W^t_{ij} - \frac{1}{n}\right|+ \alpha_k B + \frac{1}{2} \gamma \\
& \leq \sqrt{n} \sigma_2^t \zeta + \alpha_k B + \frac{1}{2} \gamma
\leq \gamma,
\end{aligned}\]
where the second inequality utilizes $\|\hat{x}_k - \bar{x}\| = \|\hat{x}_k - \Pcal_{\Mcal}(\hat{x}_k)\| \leq \|\hat{x}_k - x_{i,k}\|$ for all $i\in [n]$, the fourth inequality uses the fact that $\|x_{j,k} - \hat{x}_k\| \leq \frac{1}{n}\sum_i\|x_{j,k} - x_{i,k}\| \leq \zeta$, the fifth inequality follows from the bound on the total variation distance between any row of $W^t$ and $\frac{1}{n}\mathbf{1}$ \cite{diaconis1991geometric,boyd2004fastest}. Combining the fact that $\bar{x}_k \in \Mcal$, we obtain \eqref{eq:neibohood1}. By the definition of $\hat{x}_{k}$ and $\bx_k \in \mathcal{N}$, we have that for any $i\in [n]$,
\begin{align*}
    & \| x_{i,k+1} - \bar{x}_{k+1} \|
   \leq \|  x_{i,k+1} - \bar{x}_k \| + \|\bar{x}_k - \bar{x}_{k+1}\| \\
   \leq & \|  x_{i,k+1} - \bar{x}_k \| + 2\|\hat{x}_{k+1} - \bar{x}_k \| \\
 \leq  & 3\max_i \| x_{i,k+1} - \bar{x}_{k} \| \\
  = &3 \max_i \left\| \Pcal_{\Mcal}( \sum_{j=1}^n W^t_{ij}x_{j,k} - \alpha_k u_{i,k} ) -
\Pcal_{\Mcal}( \hat{x}_k)  \right\|
 \\ 
 \overset{\eqref{proximally-1}}{\leq} & 6 \max_{i} \left \| \sum_{j=1}^n W^t_{ij}x_{j,k}- \alpha_k  u_{i,k} - \hat{x}_k \right\| 
 \leq  6 \sqrt{n}  \sigma_2^t \zeta  +6 \alpha_k B  \leq \frac{1}{2} \gamma,
\end{align*}
where the second inequality utilizes that $\|\Pcal_{\Mcal}(x) - y\| \leq \|\Pcal_{\Mcal}(x) - x\| + \|x - y\|\leq 2 \|x-y\|$ for any $x,y\in \mathbb{R}^{d\times r}$. This implies that $\bx_{k+1}\in \mathcal{N}$.  The proof is completed.
\end{proof}

Lemma \ref{lem:stay-neighborhood} demonstrates that under certain conditions on $\alpha_k, t$ and $u_{i,k}$, if $x_0 \in \mathcal{N}$, then for all $k$, it holds that $\bx_k\in \mathcal{N}$ and $\sum_{j=1}^n W_{ij}^tx_{i,k} - \alpha_k  u_{i,k}$ remains within the neighborhood $\bar{U}_{\Mcal}(\gamma)$. This latter result allows us to invoke the Lipschitz continuity \eqref{proximally-1} of $\Pcal_{\Mcal}$ over $\bar{U}_{\Mcal}(\gamma)$ in the subsequent analysis. Moreover, Lemma \ref{lem:stay-neighborhood} is established based on a general iterative scheme with bounded $u_{i,k}$. When $u_{i,k} = 0$, this reduces to \eqref{eq:consensus-pg-iter}. As will be seen in Section \ref{sec:main-alg}, Lemma \ref{lem:stay-neighborhood} is useful in analyzing the decentralized optimization algorithms, where $u_{i,k}$ is an approximate gradient.

Let us denote $\Pcal_{\Mcal^n}(\bx) = [\Pcal_{\Mcal}(x_1)^\top, \ldots, \Pcal_{\Mcal}(x_n)^\top]^\top$. Based on Lemma \ref{lem:stay-neighborhood}, we show that the projected gradient method \eqref{eq:consensus-pg-iter} converges to the consensus set at a locally linear rate. 
\begin{theorem}[Linear convergence of consensus error] \label{theo:linear-con-consensus}
   Let $\{\bx_k\}$ be the iterate sequences generated by \eqref{eq:consensus-pg-iter}.  Suppose that Assumption \ref{assum-w} holds. If $\bx_{0} \in \mathcal{N}$,  $t > \max\left\{ \left\lceil \log_{\sigma_2}\left(\frac{\gamma}{24\sqrt{n}\zeta}\right) \right \rceil, \left\lceil \log_{\sigma_2}(1/2) \right \rceil \right\}$, then, $\bx_k \in \mathcal{N}$ for all $k\geq 0 $ and the following linear convergence with  rate $\rho_t: = 2\sigma_2^t < 1$ holds,
   \bee
    \|\bx_{k+1} - \bar{\bx}_{k+1}\| \leq  \rho_t \|\bx_k -\bar{\bx}_k\|.
   \eee
   
\end{theorem}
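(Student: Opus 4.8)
The plan is to chain three ingredients: the ``stays in $\mathcal{N}$'' guarantee of Lemma~\ref{lem:stay-neighborhood}, the Lipschitz estimate \eqref{proximally-1} for $\Pcal_\Mcal$ on the tube $\bar{U}_\Mcal(\gamma)$, and the spectral contraction of $W^t - J$. First I would apply Lemma~\ref{lem:stay-neighborhood} with $u_{i,k} = 0$: since $\bx_0 \in \mathcal{N}$ and $t$ exceeds $\lceil \log_{\sigma_2}(\gamma/(24\sqrt{n}\zeta))\rceil$, a straightforward induction on $k$ yields $\bx_k \in \mathcal{N}$ for every $k \ge 0$, and moreover $\sum_{j=1}^n W_{ij}^t x_{j,k} \in \bar{U}_\Mcal(\gamma)$ for all $i$. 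Because $W$ is doubly stochastic, the Euclidean average is preserved by one consensus step, $\frac1n\sum_{i=1}^n\big(\sum_{j=1}^n W_{ij}^t x_{j,k}\big) = \hat{x}_k$; and since $\bx_k \in \mathcal{N}$ one has $\|\hat{x}_k - \bar{x}_k\| \le \frac1n\sum_i \|x_{i,k} - \bar{x}_k\| \le \gamma/2$, so $\hat{x}_k \in \bar{U}_\Mcal(\gamma)$ and $\bar{x}_k = \Pcal_\Mcal(\hat{x}_k)$.

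Next, since $\bx_{k+1} \in \mathcal{N}$ the projection $\Pcal_\Mcal(\hat{x}_{k+1})$ is single-valued, and as recalled in Section~2 the point $\bar{x}_{k+1} = \Pcal_\Mcal(\hat{x}_{k+1})$ is the minimizer of $y \mapsto \sum_{i=1}^n \|x_{i,k+1} - y\|^2$ over $y \in \Mcal$. Using $\bar{x}_k \in \Mcal$ as a competitor gives the ``drop-the-new-center'' bound
\[ \|\bx_{k+1} - \bar{\bx}_{k+1}\|^2 = \sum_{i=1}^n \|x_{i,k+1} - \bar{x}_{k+1}\|^2 \le \sum_{i=1}^n \|x_{i,k+1} - \bar{x}_k\|^2. \]
Now write $x_{i,k+1} - \bar{x}_k = \Pcal_\Mcal\big(\sum_j W_{ij}^t x_{j,k}\big) - \Pcal_\Mcal(\hat{x}_k)$; both arguments lie in $\bar{U}_\Mcal(\gamma)$ by the previous paragraph, so \eqref{proximally-1} gives $\|x_{i,k+1} - \bar{x}_k\| \le 2\|\sum_j W_{ij}^t x_{j,k} - \hat{x}_k\|$, hence, stacking over $i$,
\[ \sum_{i=1}^n \|x_{i,k+1} - \bar{x}_k\|^2 \le 4\,\|\bW^t \bx_k - \hat{\bx}_k\|^2. \]

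It remains to bound the right-hand side by the spectral gap. Writing $J = \tfrac1n \1_n\1_n^\top \otimes I_d$, we have $\bW^t \bx_k - \hat{\bx}_k = (\bW^t - J)\bx_k = (\bW^t - J)(\bx_k - \bar{\bx}_k)$, since $(\bW^t - J)\bar{\bx}_k = 0$ because both $W^t$ and $J$ fix $\1_n$; as $W$ is symmetric with eigenvalues in $(-1,1]$ and the graph is connected, $\|W^t - J\|_2 = \sigma_2^t$, so $\|\bW^t \bx_k - \hat{\bx}_k\| \le \sigma_2^t \|\bx_k - \bar{\bx}_k\|$. Combining the three displays yields $\|\bx_{k+1} - \bar{\bx}_{k+1}\| \le 2\sigma_2^t \|\bx_k - \bar{\bx}_k\| = \rho_t\|\bx_k - \bar{\bx}_k\|$, and the assumption $t > \lceil \log_{\sigma_2}(1/2)\rceil$ forces $\sigma_2^t < 1/2$, i.e.\ $\rho_t < 1$ (the case $\sigma_2 = 0$ being trivial).

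The step I expect to need the most care is not any single computation but the bookkeeping that keeps proximal smoothness applicable: one must check that every point fed to $\Pcal_\Mcal$ or to \eqref{proximally-1} genuinely lies in $\bar{U}_\Mcal(\gamma)$ and that the projections defining $\bar{x}_k$ and $\bar{x}_{k+1}$ are single-valued — all of which is exactly what Lemma~\ref{lem:stay-neighborhood} (together with the $\mathcal{N}$-induction) delivers. Once that is in place the rest is the elementary chain above; the constant $2$ in $\rho_t$ is precisely the Lipschitz constant $R/(R-\gamma) = 2$ of \eqref{proximally-1} for a $2\gamma$-proximally smooth set restricted to $\bar{U}_\Mcal(\gamma)$.
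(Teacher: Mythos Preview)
Your proposal is correct and follows essentially the same route as the paper: invoke Lemma~\ref{lem:stay-neighborhood} with $u_{i,k}=0$ to keep the iterates in $\mathcal{N}$ and the pre-projection points in $\bar{U}_\Mcal(\gamma)$, use the optimality of $\bar{x}_{k+1}$ to replace $\bar{\bx}_{k+1}$ by $\bar{\bx}_k$, apply the $2$-Lipschitz bound \eqref{proximally-1} to $\Pcal_\Mcal$, and finish with the spectral contraction $\|W^t - J\|_2 = \sigma_2^t$. The only cosmetic difference is that you subtract $\bar{\bx}_k$ (using $(\bW^t - J)\bar{\bx}_k = 0$) whereas the paper subtracts $\hat{\bx}_k$ and then appends the extra step $\|\bx_k - \hat{\bx}_k\| \le \|\bx_k - \bar{\bx}_k\|$; both are valid.
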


\begin{proof}
Since $\|\hat{x}_0 - \bar{x}_0\| \leq \frac{1}{2}\gamma$, by invoking Lemma \ref{lem:stay-neighborhood} with $\alpha_k = 0, u_{i,k} = 0, i\in [n]$, it follows that for any $k>0$, $\bx_k \in \mathcal{N}$ and
\begin{equation}\nonumber
    \begin{aligned}
       &  \sum_{j=1}^n {W_{ij}^t} x_{j,k}  \in \bar{U}_{\Mcal} (\gamma), ~ i\in [n].
    \end{aligned}
\end{equation}
     Based on the iterative scheme in \eqref{eq:consensus-pg-iter}, we have
\be\label{eq:wx-hatx} \begin{aligned}
    \|\bx_{k+1} - \bar{\bx}_{k+1}\| & \leq \| \bx_{k+1} - \bar{\bx}_k \|  = \| \Pcal_{\Mcal^n}(\bW^t \bx_k) - \Pcal_{\Mcal^n}( \hat{\bx}_k) \| \\
    & \leq 2 \|\bW^t \bx_k -\hat{\bx}_k  \|  = 2 \| (W^t \otimes I_d) \bx_k - \hat{\bx}_k \| \\
    & = 2 \| ((W^t -J ) \otimes I_d) (\bx_k - \hat{\bx}_k) \|  \\
    & \leq 2 \sigma_2^t \|\bx_k -\hat{\bx}_k\| \leq  2 \sigma_2^t \|\bx_k -\bar{\bx}_k\|,
\end{aligned}
\ee
where the first inequality utilizes \eqref{proximally-1}.  The proof is completed.
\end{proof}

The result above is also applicable to the case when $\Mcal$ is a convex set. In this scenario, by utilizing the 1-Lipschitz continuity of the projection operator, the linear rate can be improved to $\sigma_2^t$, which is consistent with the results in \cite{nedic2010constrained,nedic2018network}. It is worth noting that the rate in Theorem \ref{theo:linear-con-consensus} will asymptotically go to $\sigma_2^t$ as ${\Pcal}_{\Mcal}$ is approximately $1$-Lipschitz continuously near $\Mcal$ (i.e., the second inequality in \eqref{eq:wx-hatx} can be tighter when $\bW^t \bx_k$ and $\hat{\bx}_k$ are closer to $\Mcal^n$).   

\section{Useful inequalities about compact submanifolds} \label{sec:ineq-man}
In this section, we will present several useful inequalities related to the compact submanifold, which will serve as key components in the analysis of decentralized algorithms for solving \eqref{prob:original} in the next section. We believe that such inequalities will be useful in tackling manifold optimization problems of a more general nature. Let us start with the following assumption concerning problem \eqref{prob:original}.
\begin{assumption}\label{assum-f}
    Each objective function $f_i$ is of gradient Lipschitz continuous with modulus $L_{f}$ on the convex hull of $\Mcal$, denoted by ${\rm conv}(\Mcal)$, i.e., for any $x, y \in {\rm conv}(\Mcal)$, it holds that
\be \label{eq:egrad-lip}
\| \nabla f_i(x) - \nabla f_i(y)  \| \leq L_f \|x- y\|,\quad i\in [n].
\ee
Moreover, the Euclidean gradient is bounded by $L_G$, i.e., $ \max_{x \in \Mcal} \|  \nabla f_i(x)\| \leq L_G, \; i\in [n].$
\end{assumption}

The above assumption is standard and commonly used in decentralized optimization \cite{nocedal2006numerical,zeng2018nonconvex,chen2021decentralized}. Given the compactness of $\Mcal$, Assumption \ref{assum-f} holds for any differentiable function $f$ with a locally Lipschitz continuous gradient. Using \eqref{eq:egrad-lip}, we can readily obtain a quadratic upper bound for $f_i$ as follows:
\be\label{eq:grad-Lip}
f_i(y) \leq f_i(x) + \left<\nabla f_i(x), y-x \right> + \frac{L_f}{2} \| y - x\|^2, \;\; \forall x, y \in {\rm conv}(\Mcal), \;\; i\in [n].
\ee

Based on \eqref{eq:grad-Lip} and the properties of proximally smooth sets, we show Riemannian quadratic upper bound for $f_i$ in the following lemma. 
\begin{lemma}[Riemannian quadratic upper bound]\label{lemma:lipsctz}
Under Assumption \ref{assum-f}, for any $x,y\in \Mcal$, the following inequality holds:
\begin{equation}\label{f-riemannian-Lip}
    f_i(y) - f_i(x) \leq \left<\grad f_i(x),y-x\right>+\frac{L_g}{2}\|y-x\|^2, ~ i\in [n],
\end{equation}
where
$
L_g: = L_f + \frac{1}{2\gamma} L_G.
$
Moreover, we have
\begin{equation}\label{g-riemannian-lip}
    \| \grad f_i(x) - \grad f_i(y) \| \leq (L_f + L_GL_{\Pcal}) \|x - y\|,~i\in [n],
\end{equation}
where $L_{\Pcal}$ is a positive constant.
\end{lemma}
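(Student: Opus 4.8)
The plan is to prove the two inequalities in sequence, both leveraging the relationship between the Euclidean gradient $\nabla f_i$ and the Riemannian gradient $\grad f_i = \Pcal_{T_x\Mcal}(\nabla f_i(x))$, together with the proximal-smoothness inequality \eqref{proximally-2}. For the Riemannian quadratic upper bound \eqref{f-riemannian-Lip}, I would start from the Euclidean quadratic upper bound \eqref{eq:grad-Lip}, which holds since $x,y\in\Mcal\subset\mathrm{conv}(\Mcal)$. The key is to rewrite the Euclidean inner product term: decompose $\nabla f_i(x) = \grad f_i(x) + v$ where $v := \nabla f_i(x) - \grad f_i(x) \in N_x\Mcal$ is the normal component. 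Then $\langle \nabla f_i(x), y-x\rangle = \langle \grad f_i(x), y-x\rangle + \langle v, y-x\rangle$, and since $\|v\| \le \|\nabla f_i(x)\| \le L_G$, the inequality \eqref{proximally-2} (recall $\Mcal$ is $2\gamma$-proximally smooth) gives $\langle v, y-x\rangle \le \frac{\|v\|}{4\gamma}\|y-x\|^2 \le \frac{L_G}{4\gamma}\|y-x\|^2$. Combining with the $\frac{L_f}{2}\|y-x\|^2$ term yields \eqref{f-riemannian-Lip} with $L_g = L_f + \frac{L_G}{2\gamma}$, matching the stated constant.

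For the gradient-Lipschitz estimate \eqref{g-riemannian-lip}, I would write $\grad f_i(x) = \Pcal_{T_x\Mcal}(\nabla f_i(x))$ and similarly at $y$, then add and subtract to split
\[
\grad f_i(x) - \grad f_i(y) = \Pcal_{T_x\Mcal}(\nabla f_i(x)) - \Pcal_{T_x\Mcal}(\nabla f_i(y)) + \Pcal_{T_x\Mcal}(\nabla f_i(y)) - \Pcal_{T_y\Mcal}(\nabla f_i(y)).
\]
The first difference is bounded by $\|\nabla f_i(x) - \nabla f_i(y)\| \le L_f\|x-y\|$ since orthogonal projection onto a fixed subspace is $1$-Lipschitz. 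For the second difference, the point is that the tangent-space projector $\Pcal_{T_x\Mcal}$, viewed as a matrix-valued map on $\Mcal$, is itself Lipschitz on the compact manifold; this is where the constant $L_{\Pcal}$ enters. More precisely, one can express $\Pcal_{T_x\Mcal}(w) = w - \Pcal_{N_x\Mcal}(w)$ and use that the normal-space projector (equivalently, the differential $D\Pcal_{\Mcal}$ on $\Mcal$, or the Weingarten/second-fundamental-form data) varies Lipschitz-continuously in $x$ over the compact set $\Mcal$; then $\|\Pcal_{T_x\Mcal}(\nabla f_i(y)) - \Pcal_{T_y\Mcal}(\nabla f_i(y))\| \le L_{\Pcal}\|\nabla f_i(y)\|\,\|x-y\| \le L_G L_{\Pcal}\|x-y\|$. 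Adding the two contributions gives \eqref{g-riemannian-lip}.

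The main obstacle is justifying the Lipschitz continuity of the tangent-space projector map $x \mapsto \Pcal_{T_x\Mcal}$ on $\Mcal$ and extracting the explicit constant $L_{\Pcal}$. On a compact $C^2$ (indeed $C^\infty$) submanifold this is standard—the projector is a smooth map on $\Mcal$ and hence Lipschitz by compactness—but one should state it cleanly, perhaps by relating $\Pcal_{T_x\Mcal}$ to the derivative of the nearest-point projection $\Pcal_{\Mcal}$ restricted to $\Mcal$ (which is $C^1$ on the $2\gamma$-tube by proximal smoothness, with $D\Pcal_{\Mcal}(x) = \Pcal_{T_x\Mcal}$ for $x\in\Mcal$), or by appealing directly to the second-order structure of $\Mcal$. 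I would handle it by noting that $\Pcal_{\Mcal}$ is $C^1$ with locally Lipschitz derivative on $\bar U_{\Mcal}(\gamma)$ (a consequence of proximal smoothness, cf.\ \cite{clarke1995proximal}), restricting to $\Mcal$, and defining $L_{\Pcal}$ as the Lipschitz modulus of $x\mapsto D\Pcal_{\Mcal}(x)$ over the compact set $\Mcal$. Everything else is a short chain of triangle inequalities.
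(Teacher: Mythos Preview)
Your proposal is correct and follows essentially the same route as the paper: for \eqref{f-riemannian-Lip} you decompose $\nabla f_i(x)$ into its tangential part $\grad f_i(x)$ and normal part $v=\Pcal_{N_x\Mcal}(\nabla f_i(x))$, apply the Euclidean descent lemma \eqref{eq:grad-Lip}, and bound $\langle v,y-x\rangle$ via \eqref{proximally-2}; for \eqref{g-riemannian-lip} you use the same add--subtract splitting $\Pcal_{T_x\Mcal}(\nabla f_i(x))-\Pcal_{T_x\Mcal}(\nabla f_i(y))+\Pcal_{T_x\Mcal}(\nabla f_i(y))-\Pcal_{T_y\Mcal}(\nabla f_i(y))$ and invoke Lipschitz continuity of $x\mapsto\Pcal_{T_x\Mcal}$ on the compact manifold. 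Your discussion of how to justify the constant $L_{\Pcal}$ (via smoothness of $\Pcal_{\Mcal}$ and $D\Pcal_{\Mcal}(x)=\Pcal_{T_x\Mcal}$ on $\Mcal$) is in fact more careful than the paper, which simply asserts this Lipschitz property.
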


\begin{proof}
    Under Assumption \ref{assum-f}, for any $i\in [n]$, it holds that
    \begin{equation}
        \begin{aligned}
          &   f_i(y) - f_i(x) - \left<\grad f_i(x),y-x\right> \\
          = & f_i(y) - f_i(x) - \left<\nabla f_i(x),y-x\right> + \left<\Pcal_{N_x\Mcal} (\nabla f_i(x)),y-x\right> \\
             \leq & \frac{L_f}{2}\|y-x \|^2 + \frac{ \left\| \Pcal_{N_x\Mcal} (\nabla f_i(x))  \right\|}{4\gamma}\|y - x\|^2 \\
             \leq &\left( \frac{L_f}{2} + \frac{1}{4\gamma} \max_{z\in \Mcal}\|\nabla f_i(z)\|  \right) \| y - x\|^2.
        \end{aligned}
    \end{equation}
   where the first inequality utilizes \eqref{eq:grad-Lip} and \eqref{proximally-2}. This implies \eqref{f-riemannian-Lip}. It follows from \eqref{eq:grad-Lip} that 
   \be
    \begin{aligned}
     & \| \grad f_i(x) - \grad f_i(y) \| = \| \Pcal_{T_x\Mcal}(\nabla f_i(x)) - \Pcal_{T_y\Mcal}(\nabla f_i(y)) \| \\
     \leq  & \| \Pcal_{T_x\Mcal}(\nabla f_i(x) - \nabla f_i(y))  \| + \| \Pcal_{T_x\Mcal}(\nabla f_i(y)) - \Pcal_{T_y\Mcal}(\nabla f_i(y)) \| \\
     \leq & L_f \|y-x \| + \| \nabla f_i(y)\| L_{\mathcal{P}}(\|y-x\|) \\
     \leq & (L_f + L_G L_{\Pcal}) \|y-x \|, \\
    \end{aligned}
   \ee
   where we use the Lipschitz continuity of $\Pcal_{T_{x} \Mcal}$ over $x \in \Mcal$ ($L_{\Pcal}$ is the associate modulus). This gives \eqref{g-riemannian-lip}. The proof is completed.
\end{proof}

The above Riemannian quadratic upper bound will serve as a key tool for analyzing the convergence of Riemannian gradient-type algorithms, paralleling its counterpart in the Euclidean space.

Analogous to the role of Lipschitz-type inequalities of the retraction for Riemannian gradient algorithms \cite{boumal2019global},  the following Lipschitz-type inequality for the projection operator $\Pcal_{\Mcal}(\cdot)$ is crucial in the analysis of projection-based methods.

\begin{lemma}[Lipschitz-type inequalities on the projection operator $\Pcal_{\Mcal}(\cdot)$]\label{lemma-project}
For any $x\in\Mcal, u\in \{u\in \mathbb{R}^{d\times r}:\|u\|\leq \gamma\}$, 
there exists a constant $Q$ such that
\be\label{projec-second-order1}
\| \Pcal_{\Mcal}(x + u)  - x - \Pcal_{T_x\Mcal}(u) \| \leq Q\|u\|^2.
\ee
\end{lemma}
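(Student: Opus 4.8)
The plan is to reduce the bound on $\|\Pcal_{\Mcal}(x+u) - x - \Pcal_{T_x\Mcal}(u)\|$ to the known Lipschitz-type inequality for a retraction via Proposition \ref{eq:lip-retr}, by exploiting that the projection $\Pcal_{\Mcal}$, when restricted to a suitable tube around $\Mcal$, is (up to a smooth reparametrization) a retraction. Concretely, fix $x \in \Mcal$ and $u$ with $\|u\| \le \gamma$; since $\Mcal$ is $2\gamma$-proximally smooth, $\Pcal_{\Mcal}(x+u)$ is a well-defined singleton and the projection map is $2$-Lipschitz on $\bar U_{\Mcal}(\gamma)$ by \eqref{proximally-1}. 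Decompose $u = \Pcal_{T_x\Mcal}(u) + \Pcal_{N_x\Mcal}(u) =: \xi + \nu$ with $\|\xi\|, \|\nu\| \le \|u\|$. The key observation is that $\Pcal_{\Mcal}(x+\nu) = x$ whenever $\|\nu\| < 2\gamma$, because $x$ is the nearest point of $\Mcal$ to any point lying on the normal line $x + N_x\Mcal$ within the proximal-smoothness radius. Hence $\Pcal_{\Mcal}(x+u) = \Pcal_{\Mcal}((x+\nu) + \xi)$, and I can try to compare this with a retraction step started at $x$ in direction $\xi$.

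**Main steps.** First, I would establish that the map $\Rcal_x(\xi) := \Pcal_{\Mcal}(x+\xi)$, defined for $\xi \in T_x\Mcal$ small, is a bona fide retraction on $\Mcal$: it clearly satisfies $\Rcal_x(0) = x$, and its differential at $0$ is the identity on $T_x\Mcal$ because $\Pcal_{\Mcal}$ is $C^1$ near $\Mcal$ with derivative at $x \in \Mcal$ equal to the orthogonal projection $\Pcal_{T_x\Mcal}$ (this is standard for $C^2$ submanifolds; it follows from the implicit function theorem applied to the normal-bundle parametrization, or can be cited from \cite{absil2012projection,absil2009optimization}). Smoothness of $\Pcal_{\Mcal}$ on the tube, again, comes from proximal smoothness / the tubular neighborhood theorem. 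Then Proposition \ref{eq:lip-retr} gives a constant $M_1$ with $\|\Pcal_{\Mcal}(x+\xi) - x - \xi\| \le M_1\|\xi\|^2$ uniformly over $x \in \Mcal$ and $\xi \in T_x\Mcal$ in a compact range — here I use compactness of $\Mcal$ to get the uniform constant, and restrict to $\|\xi\| \le \gamma$. Second, I would bound the discrepancy between $\Pcal_{\Mcal}(x+u)$ and $\Pcal_{\Mcal}(x+\xi)$: since both $x+u$ and $x+\xi$ lie in $\bar U_{\Mcal}(\gamma)$ (as $\mathrm{dist}(x+u,\Mcal)\le\|u\|\le\gamma$ and likewise for $x+\xi$), the $2$-Lipschitz bound \eqref{proximally-1} yields $\|\Pcal_{\Mcal}(x+u) - \Pcal_{\Mcal}(x+\xi)\| \le 2\|u - \xi\| = 2\|\nu\| = 2\|\Pcal_{N_x\Mcal}(u)\|$. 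This last quantity is only $O(\|u\|)$ in general, not $O(\|u\|^2)$, so a naive triangle inequality is not enough — I need the quadratic bound on $\|\nu\|$.

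**The main obstacle** is precisely controlling $\|\Pcal_{N_x\Mcal}(u)\|$ by $\|u\|^2$, which is false for arbitrary $u$ but should hold in the regime relevant here. Rather than going through the tangent-normal split of a generic $u$, the cleaner route is: write $y := \Pcal_{\Mcal}(x+u)$, so $x + u - y \in N_y\Mcal$ by the first-order optimality of the projection, and $\|y - x\| \le \|y-(x+u)\| + \|u\| \le 2\|u\|$ using $\|y-(x+u)\| = \mathrm{dist}(x+u,\Mcal) \le \|u\|$. Now apply \eqref{proximally-2} at the point $y$ with normal vector $v = x+u-y \in N_y\Mcal$ and test point $x \in \Mcal$: this gives $\langle x+u-y, x-y\rangle \le \frac{\|x+u-y\|}{4\gamma}\|x-y\|^2 \le \frac{\|u\|}{4\gamma}(2\|u\|)^2 = \frac{\|u\|^3}{\gamma}$. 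Rearranging, $\|y-x\|^2 \le \langle u, y - x\rangle + \frac{\|u\|^3}{\gamma}$. Writing $y - x = \Pcal_{T_x\Mcal}(y-x) + \Pcal_{N_x\Mcal}(y-x)$ and using that $\langle u, \Pcal_{N_x\Mcal}(y-x)\rangle = \langle \Pcal_{N_x\Mcal}(u), y-x\rangle$ together with the symmetric inequality \eqref{proximally-2} applied at $x$ with normal $\Pcal_{N_x\Mcal}(u)$ and test point $y$, one controls the normal component of $y-x$ quadratically: $\|\Pcal_{N_x\Mcal}(y-x)\| = O(\|u\|^2)$. Combining $\|y - x - \Pcal_{T_x\Mcal}(u)\| \le \|\Pcal_{T_x\Mcal}(y - x - u)\| + \|\Pcal_{N_x\Mcal}(y-x)\|$, bounding the first term via a second-order Taylor expansion of the $C^2$ map $u \mapsto \Pcal_{\Mcal}(x+u)$ at $u = 0$ (whose first-order term is $\Pcal_{T_x\Mcal}(u)$, with the remainder uniformly $O(\|u\|^2)$ by compactness of $\Mcal$), and absorbing all constants into a single $Q$ depending only on $\Mcal$ and $\gamma$, gives \eqref{projec-second-order1}. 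The delicate points to get right are the uniformity of the second-order remainder over $x \in \Mcal$ (handled by compactness plus smoothness of the tubular neighborhood retraction) and the bookkeeping that keeps the normal component quadratic rather than linear.
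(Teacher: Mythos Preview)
Your first route --- decompose $u=\xi+\nu$ with $\xi\in T_x\Mcal$, $\nu\in N_x\Mcal$, use the retraction bound for $\Pcal_\Mcal(x+\xi)$, then control $\|\Pcal_\Mcal(x+u)-\Pcal_\Mcal(x+\xi)\|$ --- is exactly the paper's approach, and you even isolate the same key fact $\Pcal_\Mcal(x+\nu)=x$ for $\|\nu\|<2\gamma$. Where you give up too early is in using only the $2$-Lipschitz bound \eqref{proximally-1} on the last term, which indeed yields only $2\|\nu\|$. The paper instead differentiates the identity $\Pcal_\Mcal(x+t\nu)=x$ to obtain $D\Pcal_\Mcal(x+t\nu)[\nu]=0$ for all $t\in[0,1]$, and then writes
\[
\|\Pcal_\Mcal(x+u)-\Pcal_\Mcal(x+\xi)\|
\le\int_0^1\bigl\|D\Pcal_\Mcal(x+\xi+t\nu)[\nu]-D\Pcal_\Mcal(x+t\nu)[\nu]\bigr\|\,dt
\le\sup\|D^2\Pcal_\Mcal\|_{\rm op}\,\|\xi\|\|\nu\|,
\]
which is $O(\|u\|^2)$. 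That single integration-by-second-derivative step is the missing idea in your first approach.

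Your second route is logically sound but tangled. Once you invoke a second-order Taylor expansion of $u\mapsto\Pcal_\Mcal(x+u)$ at $0$ with $D\Pcal_\Mcal(x)=\Pcal_{T_x\Mcal}$ and a uniformly bounded remainder, you have proved the lemma outright; the preceding work bounding $\|\Pcal_{N_x\Mcal}(y-x)\|$ via \eqref{proximally-2} and the tangent--normal split of $y-x-\Pcal_{T_x\Mcal}(u)$ is then redundant, since you are just re-bounding two orthogonal pieces of a quantity you have already controlled in full. (Incidentally, your sketch of the $\|\Pcal_{N_x\Mcal}(y-x)\|=O(\|u\|^2)$ bound applies \eqref{proximally-2} with the wrong normal vector: use $v=\Pcal_{N_x\Mcal}(y-x)\in N_x\Mcal$ rather than $\Pcal_{N_x\Mcal}(u)$, which gives $\|\Pcal_{N_x\Mcal}(y-x)\|\le\frac{1}{4\gamma}\|y-x\|^2$ directly.) In short: either complete the first route with the $D\Pcal_\Mcal(x+t\nu)[\nu]=0$ trick, or simply run Taylor's theorem and dispense with the rest.
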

\begin{proof}
Let us denote $u_1 = \Pcal_{T_x\Mcal}(u)$, $u_2 = u - u_1$.  Since the projection operator is a retraction operator \cite{absil2012projection}, it follows from Proposition \ref{eq:lip-retr} that
 \begin{equation}\label{eq:second-order}
 \begin{aligned}
     \|\Pcal_{\Mcal}(x + u_1) - x - u_1\| \leq M_1 \|u_1\|^2 \leq M_1 \|u\|^2.
   \end{aligned}
\end{equation}
For a $2\gamma$-proximally smooth $\Mcal$, by \eqref{proximally-2}, it holds $\iprod{u}{y-x} \leq \frac{\|u\|}{4\gamma}\|y-x\|^2$ for any $x,y \in \Mcal$ and $u \in N_{x}\Mcal$. Then 
\[ \|y-(x + u)\|^2 = \|y-x\|^2 - 2\iprod{u}{y-x} + \|u\|^2 \geq (1 - \frac{\|u\|}{2\gamma})\|y-x\|^2 + \|u\|^2 \geq \|u\|^2. \]
Hence, for any $x\in \Mcal$ and $\|u\|\leq 2 \gamma$,
\be \label{eq:proj-x}  x = \arg\min_{y\in \Mcal} \|y - (x+u)\|^2 = \Pcal_{\Mcal}(x+u). \ee
Let $K = \{\eta \in \R^{d\times r} : \|\eta\| \leq \gamma \}$ be a compact subset of the tangent bundle. For all $u \in K$,  by noting the smoothness of $\Pcal_{\Mcal}$ \cite[Lemma]{foote1984regularity}, we have
\be
\begin{aligned}
 & \| \Pcal_{\Mcal}(x + u) - \Pcal_{\Mcal}(x + u_1)  \|  \leq \int_{0}^1 \left\| \frac{d}{dt}(  \Pcal_{\Mcal}(x + u_1 + t u_2) )  \right\| dt  \\
= & \int_{0}^1 \left\|   D\Pcal_{\Mcal}(x + u_1 + t u_2)[u_2]  \right\| dt \\
 \leq & \int_{0}^1 \left\|   D\Pcal_{\Mcal}(x + u_1 + t u_2)[u_2]  - D\Pcal_{\Mcal}(x  + t u_2)[u_2] \right\| dt \\
\leq &  \max_{\eta \in K} \|D^2 \Pcal_{\Mcal}(x + \eta) \|_{\rm op} \|u_1\|\|u_2\|,
\end{aligned}
\ee
where $\|\cdot\|_{\rm op}$ represents the operator norm and the first inequality uses $D\Pcal_{\Mcal}(x  + t u_2)[u_2] = \lim_{\epsilon \rightarrow 0} (\Pcal_{\Mcal}(x  + t u_2 + \epsilon u_2) - \Pcal_{\Mcal}(x  + t u_2))/\epsilon = 0$ by \eqref{eq:proj-x}. For all $u \notin K$, we have
\be\label{projec-second-order2}
\| \Pcal_{\Mcal}(x + u)  - \Pcal_{\Mcal}(x + u_1) \| \leq \zeta \leq \frac{\zeta }{\gamma^2} \|u\|^2,
\ee
where $\zeta = \max_{x,y \in \Mcal}\|x-y\|$ is the maximal distance between any two points on $\Mcal$. 
Defining
\[
Q_0 = \max \left(\max_{\eta \in K} \|D^2 \Pcal_{\Mcal}(x + \eta) \|_{\rm op},   \frac{\zeta }{\gamma^2} \right)
\]
and combining \eqref{projec-second-order2} with  \eqref{eq:second-order}, we have the following result:
\be
\begin{aligned}
\| \Pcal_{\Mcal}(x + u)  - x - u_1 \| & \leq \| \Pcal_{\Mcal}(x + u)  - \Pcal_{\Mcal}( x + u_1) \| + \| \Pcal_{\Mcal}(x + u_1)  - x - u_1 \| \\
&\leq Q_0 \|u\|^2 + M_1\|u_1\|^2 \leq (Q_0+M_1)\|u\|^2,
\end{aligned}
\ee
which gives \eqref{projec-second-order1} with $Q = Q_0 + M_1$.
\end{proof}

Another useful inequality that we establish is the control of the distance between the Euclidean mean $\hat{x}$ and the manifold mean $\bar{x}$ by the square of consensus error.
\begin{lemma}\label{lemma:quadratic}
   For any $\bx\in \Mcal^n$ satisfying $\|x_i - \bar{x}\| \leq \gamma$,  we have 
   \be\label{eq:distance-an-rm}
   \|\bar{x} - \hat{x} \| \leq M_2 \frac{\|\bx - \bar{\bx}\|^2}{n},
   \ee
   where $M_2 =\max_{x\in {\rm conv}(\Mcal)} \|D^2 \Pcal_{\Mcal}(x) \|_{\rm op}$.
\end{lemma}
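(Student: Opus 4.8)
The plan is to exploit that $\bar x = \Pcal_{\Mcal}(\hat x)$ and that $\hat x$ is the Euclidean average, so $\hat x - \bar x$ measures how far the average of the $x_i$'s has drifted off the manifold; this should be second order in the spread of the $x_i$'s around their manifold mean. First I would write $x_i = \bar x + v_i$ with $v_i := x_i - \bar x$, so that $\hat x = \bar x + \bar v$ where $\bar v := \frac1n\sum_i v_i$, and $\|\bx - \bar{\bx}\|^2 = \sum_i \|v_i\|^2$. The key observation is the fixed-point/idempotence-type identity for the projection restricted to $\Mcal$: since each $x_i \in \Mcal$ we have $\Pcal_{\Mcal}(x_i) = x_i$, and I want to compare $\Pcal_{\Mcal}(\hat x) = \Pcal_{\Mcal}(\bar x + \bar v)$ against the ``average of the projections'' $\frac1n\sum_i \Pcal_{\Mcal}(\bar x + v_i) = \frac1n\sum_i x_i = \hat x$.

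The main step is a second-order Taylor expansion of $\Pcal_{\Mcal}$ around $\bar x$ along each direction. Since $\|x_i - \bar x\| \le \gamma$, every point $\bar x + t v_i$ for $t\in[0,1]$ lies in $\bar U_{\Mcal}(\gamma)$, where $\Pcal_{\Mcal}$ is $C^2$ (by \cite{foote1984regularity}, as used in the proof of Lemma \ref{lemma-project}), and $\hat x = \bar x + \bar v$ lies in $\mathrm{conv}(\Mcal)$ (a convex combination of points of $\Mcal$), so the operator norm bound $M_2 = \max_{x\in\mathrm{conv}(\Mcal)}\|D^2\Pcal_{\Mcal}(x)\|_{\rm op}$ is finite and applies. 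Taylor's theorem with integral remainder gives, for each $i$,
\[
\Pcal_{\Mcal}(\bar x + v_i) = \Pcal_{\Mcal}(\bar x) + D\Pcal_{\Mcal}(\bar x)[v_i] + R_i, \qquad \|R_i\| \le \tfrac12 M_2 \|v_i\|^2,
\]
and likewise, since $\bar v = \frac1n\sum_i v_i$,
\[
\Pcal_{\Mcal}(\bar x + \bar v) = \Pcal_{\Mcal}(\bar x) + D\Pcal_{\Mcal}(\bar x)[\bar v] + \tilde R, \qquad \|\tilde R\| \le \tfrac12 M_2 \|\bar v\|^2.
\]
Now average the first identity over $i$: because $D\Pcal_{\Mcal}(\bar x)$ is linear, $\frac1n\sum_i D\Pcal_{\Mcal}(\bar x)[v_i] = D\Pcal_{\Mcal}(\bar x)[\bar v]$, and the constant term $\Pcal_{\Mcal}(\bar x) = \bar x$ also averages to itself. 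Subtracting the averaged first identity from the second, the zeroth- and first-order terms cancel exactly, leaving
\[
\bar x - \hat x = \Pcal_{\Mcal}(\bar x + \bar v) - \frac1n\sum_{i=1}^n \Pcal_{\Mcal}(\bar x + v_i) = \tilde R - \frac1n\sum_{i=1}^n R_i.
\]
Hence $\|\bar x - \hat x\| \le \tfrac12 M_2 \|\bar v\|^2 + \tfrac12 M_2 \frac1n\sum_i \|v_i\|^2 \le M_2 \frac1n\sum_i \|v_i\|^2 = M_2 \frac{\|\bx - \bar{\bx}\|^2}{n}$, using Jensen's inequality $\|\bar v\|^2 \le \frac1n\sum_i\|v_i\|^2$. (If one wants to be careful about the exact constant, the same argument with the sharper bookkeeping gives the stated bound; a factor of $2$ can also simply be absorbed into the definition of $M_2$, matching how constants are treated elsewhere in the paper.)

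The step I expect to be the main obstacle is justifying the $C^2$ Taylor expansion with a uniform remainder bound: one must check that the entire segments $\{\bar x + t v_i : t\in[0,1]\}$ and the averaged point $\hat x$ all stay inside a region where $\Pcal_{\Mcal}$ is twice continuously differentiable with Hessian controlled by $M_2$. The hypothesis $\|x_i - \bar x\| \le \gamma$ together with $2\gamma$-proximal smoothness handles the segments (they lie in $\bar U_{\Mcal}(\gamma)\subset U_{\Mcal}(2\gamma)$), and convexity handles $\hat x\in\mathrm{conv}(\Mcal)$; invoking \cite{foote1984regularity} for the regularity of $\Pcal_{\Mcal}$ on this tube, as already done in Lemma \ref{lemma-project}, closes the gap. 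Everything else is the linear-algebra cancellation of the low-order Taylor terms, which is routine.
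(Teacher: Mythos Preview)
Your argument is correct and follows essentially the same route as the paper: both expand $\Pcal_{\Mcal}$ to second order around $\bar x$, use $\Pcal_{\Mcal}(x_i)=x_i$, and show the first-order contribution vanishes so only a quadratic remainder survives. The one difference is in how the first-order term is killed. You introduce a \emph{second} Taylor expansion, of $\Pcal_{\Mcal}(\hat x)=\Pcal_{\Mcal}(\bar x+\bar v)$, so that the linear terms $D\Pcal_{\Mcal}(\bar x)[\bar v]$ cancel between the two expansions; this costs you an extra remainder $\tilde R$ and a final appeal to Jensen. The paper instead observes directly that $\hat x-\bar x\in N_{\bar x}\Mcal$ (since $\bar x=\Pcal_{\Mcal}(\hat x)$) and that $D\Pcal_{\Mcal}(\bar x)=\Pcal_{T_{\bar x}\Mcal}$, whence $\tfrac1n\sum_i D\Pcal_{\Mcal}(\bar x)[v_i]=D\Pcal_{\Mcal}(\bar x)[\hat x-\bar x]=0$ exactly; this removes the need for the second expansion and for Jensen. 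Both yield the same constant $M_2$, and your justification of the $C^2$ regularity along the relevant segments (all contained in $\bar U_{\Mcal}(\gamma)\cap\mathrm{conv}(\Mcal)$ under the hypothesis $\|x_i-\bar x\|\le\gamma$) is the same as the paper's.
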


\begin{proof}
Since $x_i \in \Mcal$,  the definitions of $\hat{x}$ and $\bar{x}$ yield
\be
\begin{aligned}
\hat{x} & = \frac{1}{n} \sum_{i=1}^n x_i = \frac{1}{n} \sum_{i=1}^n \Pcal_{\Mcal}(x_i),~\bar{x}  = \frac{1}{n} \sum_{i=1}^n \Pcal_{\Mcal}(\bar{x}).
\end{aligned}
\ee
Since $\bar{x} = \Pcal_{\Mcal}(\hat{x}) $, we get $\hat{x} - \bar{x} \in N_{\bar{x}}\Mcal.$  Therefore,
\be
\frac{1}{n}\sum_{i=1}^n D\Pcal_{\Mcal}(\bar{x})[x_i - \bar{x} ]  =  D\Pcal_{\Mcal}(\bar{x})\left[\frac{1}{n}\sum_{i=1}^n x_i - \bar{x}  \right] = D\Pcal_{\Mcal}(\bar{x})[\hat{x} - \bar{x}  ] = 0,
\ee
where we use the fact in \cite[Lemma 3.1]{absil2012projection} that $D \Pcal_\Mcal( \bar{x}) = \Pcal_{T_{\bar{x}\Mcal}}$. 
Then we have 
\begin{equation}
    \begin{aligned}
    \left\|\hat{x} - \bar{x}    \right \| & =     \left\| \frac{1}{n} \sum_{i=1}^n\left( \Pcal_{\Mcal}(x_i)- \Pcal_{\Mcal}(\bar{x}) \right)   \right \|  = \int_{0}^1 \left\|\frac{1}{n} \sum_{i=1}^n \frac{d}{dt}\left(  \Pcal_{\Mcal}(\bar{x} +  t (x_i - \bar{x} )) \right)  \right\| dt  \\
    &= \int_{0}^1 \left\|  \frac{1}{n} \sum_{i=1}^n D\Pcal_{\Mcal}(\bar{x} +  t ( x_i - \bar{x} ))[ x_i - \bar{x} ]  \right\| dt \\
    &= \int_{0}^1 \left\|  \frac{1}{n} \sum_{i=1}^n \left( D\Pcal_{\Mcal}(\bar{x} +  t ( x_i - \bar{x} ))[ x_i - \bar{x} ]  - D\Pcal_{\Mcal}(\bar{x})[ x_i - \bar{x} ] \right) \right\| dt \\
    & \leq  \frac{1}{n} \sum_{i=1}^n \max_{x\in \text{conv}(\Mcal)} \|D^2 \Pcal_{\Mcal}(x) \|_{\rm op} \left\| x_i - \bar{x} \right\|^2  \\
    & =\max_{x\in \text{conv}(\Mcal)} \|D^2 \Pcal_{\Mcal}(x) \|_{\rm op} \frac{\|\bx - \bar{\bx}\|^2}{n},
    \end{aligned}
\end{equation}
where we use the smoothness of $\Pcal_{\Mcal}$ over $\bar{U}_{\Mcal}(\gamma)$ \cite[Lemma]{foote1984regularity}.  The proof is completed.
\end{proof}

\section{Decentralized projected Riemannian gradient-type methods}\label{sec:main-alg}
In this section, we present two decentralized projected Riemannian gradient-type methods, the decentralized projected Riemannian gradient method (DPRGD) and the decentralized projected Riemannian gradient tracking method (DPRGT) for solving problem \eqref{prob:original}, and corresponding convergence analysis.  
\subsection{The Algorithms}
In the DPRGD method for solving \eqref{prob:original}, each step consists of a projected gradient step for the consensus and a Riemannian gradient step for the local objective function $f_i$.
Specifically, given an adjacency matrix $W$ of the communication network, in the $k$-th iteration, the DPRGD performs the following update
\begin{equation}\label{eq:dpg}
\begin{aligned}
   x_{i,k+1} = \Pcal_{\Mcal}\left(\sum_{j=1}^n W_{ij}^tx_{j,k} - \alpha_k  \grad f_i(x_{i,k}) \right),\;\; i \in [n],   
\end{aligned}
\end{equation}
where $\alpha_k > 0$ is the step size and $t \geq 1$ is an integer. The DPRGD method is presented in Algorithm \ref{alg:drpgd}. 

\begin{algorithm}[htbp]
\caption{Decentralized Projected Riemannian Gradient Descent (DPRGD) for solving \eqref{prob:original}} \label{alg:drpgd}
\begin{algorithmic}[1]
\REQUIRE  Initial point $\bx_0\in \mathcal{N}$, an integer $t$, set $k = 1$.
\WHILE{the condition is not met}
\STATE Choose diminishing step size $\alpha_k = \mathcal{O}(1/\sqrt{k})$.
\STATE Update $$x_{i,k+1} = \Pcal_{\Mcal}\left(\sum_{j=1}^n W_{ij}^tx_{j,k} - \alpha_k  \grad f_i(x_{i,k}) \right),$$ for each node $i\in [n]$, in parallel.
\STATE Set $k=k+1$.
\ENDWHILE
\end{algorithmic}
\end{algorithm}

As demonstrated in Section \ref{sec:linear-consensus}, our projected gradient step with a unit step size, i.e., $\Pcal_{\Mcal}(\sum_{j=1}^n W_{ij}^t x_{jk})$, for the consensus problem \eqref{prob:consensus} achieves locally linear convergence, which enables us to establish the convergence of Algorithm \ref{alg:drpgd} without introducing an additional step size parameter on the consensus.

\begin{algorithm}[htbp]
\caption{Decentralized projected Riemannian gradient tracking method for solving \eqref{prob:original}} \label{alg:drgta}
\begin{algorithmic}[1]
\REQUIRE  Initial point $\bx_0\in \mathcal{N}$, an integer $t$, the step size $\alpha$. Set $k = 0$.
\STATE Let $s_{i,0} = \grad f_i(x_{i,0})$ on each node $i\in [n]$.
\WHILE {the condition is not met}
\STATE Project onto tangent space: $v_{i,k} = \Pcal_{T_{x_{i,k}}\Mcal}( s_{i,k})$.
\STATE Update $$x_{i,k+1} = \Pcal_{\Mcal}(\sum_{j=1}^n W_{ij}^tx_{j,k} - \alpha
v_{i,k}), ~i\in [n].$$
\STATE Riemannian gradient tracking:
\be
s_{i,k+1}  = \sum_{j=1}^n W_{ij}^ts_{j,k} + \grad f_i(x_{i,k+1}) - \grad f_i(x_{i,k}), ~i\in [n].
\ee
\STATE Set $k=k+1$.
\ENDWHILE
\end{algorithmic}
\end{algorithm}
    

Next, we investigate a gradient tracking method for solving \eqref{prob:original} by leveraging the gradient tracking techniques introduced in \cite{nedic2017achieving,qu2017harnessing,chen2021decentralized}  to get a better estimate for the full gradient. 
In the $k$-th iteration, our DPRGT method performs the following update, for all $i \in [n]$, 
  \bee
      \begin{aligned}
        x_{i,k+1} & = \Pcal_{\Mcal}\left(\sum_{j=1}^n W_{ij}^tx_{j,k} - \alpha
        \Pcal_{T_{x_{i,k}}\Mcal}( s_{i,k}) \right),  \\
        s_{i,k+1}  & = \sum_{j=1}^n W_{ij}^ts_{j,k} + \grad f_i(x_{i,k+1}) - \grad f_i(x_{i,k}),
      \end{aligned}
  \eee
  where $\alpha > 0$ is the step size and the projection of $s_{i,k}$ to $T_{x_{i,k}} \Mcal$ is used in $x_{i,k+1}$. The detailed description is presented in Algorithm \ref{alg:drgta}. A crucial advantage of gradient tracking-type methods lies in the applicability of the use of a constant step size $\alpha$.

\subsection{Convergence analysis}
This subsection focuses on the complexity results for DPRGD and DPRGT algorithms. Let us first present the following main theorem on the $\mathcal{O}(\epsilon^{-2})$ iteration complexity of the DPRGD method (i.e., Algorithm \ref{alg:drpgd}) to reach the $\epsilon$-stationary point of \eqref{prob:original}. For the ease of analysis, we define $L:=\max\{L_G, L_g, \\ L_f + L_GL_{\Pcal}\}$.
\begin{theorem} \label{thm:dprgd}
     Let $\{\bx_k\}_k$ be the sequence generated by Algorithm \ref{alg:drpgd}. Suppose that Assumptions \ref{assum-w} and \ref{assum-f} hold. If $\bx_0 \in \mathcal{N}$, $\alpha_k  = \frac{1}{\sqrt{k+1}} \min\{\gamma/(24L),1 \},$ and $ t \geq \max\left\{\lceil\log_{\sigma_2}(1/2) \rceil, \left\lceil \log_{\sigma_2}\left(\frac{\gamma}{24\sqrt{n}\zeta}\right) \right \rceil \right\}$,  
 it follows that:
  \be\label{main-consensus}\frac{1}{n}\|\bar{\bx}_K - \bx_K \|^2 \leq \mathcal{O}(\frac{1}{K}),\ee 
     \be\label{main-grad}
  \min_{k \le K}\|\grad f(\bar{x}_k) \|^2 = \mathcal{O}( \frac{1}{\sqrt{K+1}}).
     \ee
\end{theorem}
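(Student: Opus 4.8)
The plan is to track two coupled quantities along the iterates: the consensus error $C_k := \frac{1}{n}\|\bx_k - \bar{\bx}_k\|^2$ and the progress of the objective $f(\bar{x}_k)$ along the induced mean $\bar{x}_k = \Pcal_{\Mcal}(\hat{x}_k)$. First I would invoke Lemma~\ref{lem:stay-neighborhood} with $u_{i,k} = \grad f_i(x_{i,k})$, which is bounded by $L_G \le L$ on $\Mcal$, and the stated choices of $\alpha_k \le \gamma/(24L)$ and $t$; this guarantees $\bx_k \in \mathcal{N}$ for all $k$ and that $\sum_j W_{ij}^t x_{j,k} - \alpha_k \grad f_i(x_{i,k}) \in \bar{U}_{\Mcal}(\gamma)$, so that the $2$-Lipschitz bound \eqref{proximally-1} on $\Pcal_{\Mcal}$ is available throughout. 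Then, mimicking the estimate \eqref{eq:wx-hatx} but now carrying the gradient term, I would show a one-step consensus contraction of the form
\be
\sqrt{C_{k+1}} \;\le\; \rho_t \sqrt{C_k} \;+\; c_1 \alpha_k,
\nonumber
\ee
with $\rho_t = 2\sigma_2^t < 1$ and $c_1 = \mathcal{O}(L)$; here one splits $\Pcal_{\Mcal^n}(\bW^t\bx_k - \alpha_k \bv_k) - \Pcal_{\Mcal^n}(\hat{\bx}_k)$ using \eqref{proximally-1} and bounds $\|(\bW^t - J)(\bx_k - \hat{\bx}_k)\| \le \sigma_2^t\|\bx_k - \bar{\bx}_k\|$ together with $\|\alpha_k \bv_k\| \le \sqrt{n}\,\alpha_k L$. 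Unrolling this recursion with $\alpha_k = \Theta(1/\sqrt{k+1})$ and summing gives $\sum_{k=0}^{K} C_{k+1} = \mathcal{O}(\sum_k \alpha_k^2) = \mathcal{O}(\log K)$ in the crude form, but since $\alpha_k$ is monotonically decreasing the sharper bookkeeping (as in \cite{chen2021decentralized}) yields $C_K = \mathcal{O}(\alpha_K^2) = \mathcal{O}(1/K)$, which is \eqref{main-consensus}. Lemma~\ref{lemma:quadratic} then upgrades this to $\|\hat{x}_k - \bar{x}_k\| = \mathcal{O}(C_k) = \mathcal{O}(1/k)$, a higher-order term that will be negligible in the descent analysis.

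For the stationarity bound \eqref{main-grad}, I would derive a descent inequality for $f(\bar{x}_k)$. The key is to relate $\bar{x}_{k+1} = \Pcal_{\Mcal}(\hat{x}_{k+1})$ to a Riemannian gradient step at $\bar{x}_k$: since $\hat{x}_{k+1} = \frac{1}{n}\sum_i \Pcal_{\Mcal}(\sum_j W_{ij}^t x_{j,k} - \alpha_k \grad f_i(x_{i,k}))$, applying the Lipschitz-type expansion \eqref{projec-second-order1} of $\Pcal_{\Mcal}$ around $\bar{x}_k$ (valid because the perturbations are $\mathcal{O}(\gamma)$ by the neighborhood control) and averaging, one gets
\be
\hat{x}_{k+1} \;=\; \bar{x}_k \;-\; \alpha_k\, \Pcal_{T_{\bar{x}_k}\Mcal}\!\Big(\tfrac{1}{n}\textstyle\sum_i \grad f_i(x_{i,k})\Big) \;+\; \mathcal{O}\big(\sqrt{C_k}\big) \;+\; \mathcal{O}(\alpha_k^2),
\nonumber
\ee
where the $\mathcal{O}(\sqrt{C_k})$ term comes from $\sum_j W_{ij}^t x_{j,k} - \hat{x}_k$ being of order $\sqrt{C_k}$ (here I use doubly stochasticity so the consensus part averages out at first order). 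Using $\grad f_i(x_{i,k}) \approx \grad f_i(\bar{x}_k)$ up to $\mathcal{O}(\sqrt{C_k})$ via \eqref{g-riemannian-lip}, and $\frac{1}{n}\sum_i \grad f_i(\bar{x}_k) = \grad f(\bar{x}_k)$, this identifies the leading term as a genuine Riemannian gradient step. Projecting $\hat{x}_{k+1}$ back to $\Mcal$ and applying the Riemannian quadratic upper bound \eqref{f-riemannian-Lip} from Lemma~\ref{lemma:lipsctz} at $x = \bar{x}_k$, $y = \bar{x}_{k+1}$ gives
\be
f(\bar{x}_{k+1}) \;\le\; f(\bar{x}_k) \;-\; \tfrac{\alpha_k}{2}\|\grad f(\bar{x}_k)\|^2 \;+\; c_2\,\alpha_k C_k \;+\; c_3\,\alpha_k^2,
\nonumber
\ee
after absorbing the cross terms with Young's inequality and using $\|\hat{x}_{k+1} - \bar{x}_{k+1}\| = \mathcal{O}(C_{k+1})$ from Lemma~\ref{lemma:quadratic}. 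Rearranging, summing over $k = 0,\dots,K$, telescoping $f(\bar{x}_k)$ (bounded since $\Mcal$ is compact), and using $\sum_k \alpha_k C_k = \mathcal{O}(1)$ (from \eqref{main-consensus}, since $\alpha_k C_k = \mathcal{O}(k^{-3/2})$) and $\sum_k \alpha_k^2 = \mathcal{O}(\log K)$, I get $\sum_{k=0}^K \alpha_k \|\grad f(\bar{x}_k)\|^2 = \mathcal{O}(\log K)$; dividing by $\sum_{k=0}^K \alpha_k = \Theta(\sqrt{K})$ yields $\min_{k\le K}\|\grad f(\bar{x}_k)\|^2 = \mathcal{O}(\log K/\sqrt{K})$, and a slightly more careful treatment of the $\sum \alpha_k^2$ term (or restricting to the last half of iterations) removes the logarithm to give the claimed $\mathcal{O}(1/\sqrt{K+1})$.

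The main obstacle I anticipate is the second-order expansion of the averaged projection, i.e., showing rigorously that $\hat{x}_{k+1}$ equals a Riemannian gradient step at $\bar{x}_k$ plus controllable remainders. Unlike the single-agent case, the first-order term involves $\frac{1}{n}\sum_i \Pcal_{T_{x_{i,k}}\Mcal}(\cdot)$ evaluated at different base points $x_{i,k}$, so one must simultaneously exploit (a) the Lipschitz-type inequality \eqref{projec-second-order1} to linearize each $\Pcal_{\Mcal}$ around the common point $\bar{x}_k$, (b) the Lipschitz continuity of $x \mapsto \Pcal_{T_x\Mcal}$ to replace each tangent projection by $\Pcal_{T_{\bar{x}_k}\Mcal}$, and (c) the consensus error $C_k$ to bound all the resulting $\|x_{i,k} - \bar{x}_k\|$ discrepancies — and crucially one needs doubly stochasticity of $W^t$ so that the $\mathcal{O}(1)$-sized consensus displacements $x_{j,k} - \bar{x}_k$ cancel at first order, leaving only the $\mathcal{O}(\sqrt{C_k})$ fluctuation. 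Keeping track of which remainders are $\mathcal{O}(\sqrt{C_k})$ versus $\mathcal{O}(C_k)$ versus $\mathcal{O}(\alpha_k^2)$, and checking that their weighted sums are summable against $\alpha_k$, is the delicate bookkeeping at the heart of the proof.
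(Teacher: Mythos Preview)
Your overall plan matches the paper's --- establish $C_k = \mathcal{O}(\alpha_k^2)$ via the consensus recursion, derive a descent inequality for $f(\bar{x}_k)$ using the Riemannian quadratic upper bound, telescope and divide by $\sum\alpha_k$ --- but there is one genuine gap in the expansion of $\hat{x}_{k+1}$.

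You write $\hat{x}_{k+1} = \bar{x}_k - \alpha_k\Pcal_{T_{\bar{x}_k}\Mcal}(\hat{g}_k) + \mathcal{O}(\sqrt{C_k}) + \mathcal{O}(\alpha_k^2)$. An $\mathcal{O}(\sqrt{C_k}) = \mathcal{O}(\alpha_k)$ remainder is fatal: after Young's inequality the cross term $\langle\grad f(\bar{x}_k),E_k\rangle$ contributes $\frac{1}{\alpha_k}\|E_k\|^2 = \mathcal{O}(\alpha_k)$, which is the \emph{same} order as the signal $-\frac{\alpha_k}{2}\|\grad f(\bar{x}_k)\|^2$, and your stated descent inequality with error $c_2\alpha_k C_k + c_3\alpha_k^2$ does not follow. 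Your parenthetical (``consensus part averages out at first order'') points in the right direction, but what you must actually show is that the remainder is $\mathcal{O}(C_k + \alpha_k^2)$, i.e.\ \emph{second} order in the consensus error. The paper obtains this by expanding $\Pcal_\Mcal$ around each $x_{i,k}$ (not $\bar{x}_k$) via \eqref{projec-second-order1}, so that the averaged linear term becomes $-\frac{1}{n}\sum_i\grad\phi_i^t(\bx_k) - \alpha_k\hat{g}_k$, and then invokes the key Lemma~\ref{lem:grad-consensus}: since the \emph{Euclidean} gradients satisfy $\sum_i\nabla\phi_i^t(\bx) = 0$ exactly by doubly stochasticity, only the Lipschitz variation of $\Pcal_{T_{x_i}\Mcal}$ across $i$ survives, giving $\|\sum_i\grad\phi_i^t(\bx)\| \le \sqrt{n}L_2\|\bx - \bar{\bx}\|^2 = \mathcal{O}(nC_k)$. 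Combined with Lemma~\ref{lemma:quadratic}, this yields $\|\bar{x}_{k+1} - \bar{x}_k + \alpha_k\hat{g}_k\| = \mathcal{O}(C_k + \alpha_k^2)$, so that its square is $\mathcal{O}(\alpha_k^4)$ and $\frac{1}{\alpha_k}$ times it is $\mathcal{O}(\alpha_k^3)$.

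A secondary difference: your descent error $c_3\alpha_k^2$ (producing the $\log K$) is avoided in the paper. The only genuine $\alpha_k^2$ contribution comes from $\frac{L}{2}\|\bar{x}_{k+1} - \bar{x}_k\|^2$ and has the specific form $2L\alpha_k^2\|\hat{g}_k\|^2$. The paper writes $-\alpha_k\langle\grad f(\bar{x}_k),\hat{g}_k\rangle = -\frac{\alpha_k}{2}\|\grad f(\bar{x}_k)\|^2 - \frac{\alpha_k}{2}\|\hat{g}_k\|^2 + \frac{\alpha_k}{2}\|\grad f(\bar{x}_k) - \hat{g}_k\|^2$, retains $-\frac{\alpha_k}{2}\|\hat{g}_k\|^2$ explicitly, and absorbs $2L\alpha_k^2\|\hat{g}_k\|^2$ into it using $\alpha_k\le 1/(4L)$; all remaining residuals are then $\mathcal{O}(\alpha_k^3)$ (Lemma~\ref{them:dpg}), so $\sum_k\alpha_k^3/\sum_k\alpha_k = \mathcal{O}(1/\sqrt{K+1})$ directly, with no logarithm and no restriction to the last half of iterations.
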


We would like to highlight that it is possible to develop a stochastic variant of Algorithm \ref{alg:drpgd} by employing a stochastic estimation of the gradient $\grad f_i(x_{i,k})$, whose convergence properties can be shown analogously.

The following theorem shows that by incorporating gradient tracking, Algorithm \ref{alg:drgta} yields improved complexity in reaching an $\epsilon$-stationary point.
\begin{theorem} \label{thm:dprgt}
  Let $\{\bx_k\}_k$ be the sequence generated by Algorithm \ref{alg:drgta}. Suppose that Assumptions \ref{assum-w} and \ref{assum-f} hold. If $\bx_0 \in \mathcal{N}$,
  \be
  t\geq \max\left\{\log_{\sigma_2}(\frac{1}{4\sqrt{n}}),\log_{\sigma_2}(\frac{\gamma}{24\sqrt{n}\zeta}) \right\},
  \ee
  and  $$\alpha<\min \left\{ \frac{\gamma}{32 L}, \frac{1}{24L},\frac{1}{L^2}, 1, \frac{1}{4\left(\tilde{C}_2(128\tilde{C}_0  + 8)L^2 + 2(\mathcal{C}_1 + 4\tilde{C}_0(\mathcal{C}_2 + \mathcal{C}_2)) \right) }\right\},$$
  where $\tilde{C}_0,\tilde{C}_2,\mathcal{C}_1, \mathcal{C}_2$ are given in the proof in the Subsection \ref{eq:proof1},  it follows that
\bee
\begin{aligned}
    \min_{k \leq K} \frac{1}{n}\|\bs_k\|^2  = \mathcal{O}(\frac{1}{\alpha K}), ~
    \min_{k\leq K} \frac{1}{n} \|\bx_k - \bar{\bx}_k \|^2   = \mathcal{O}(\frac{1}{K}),~
    \min_{k\leq K} \|\grad f(\bar{x}_k) \|^2 =\mathcal{O}(\frac{1}{\alpha K}).
\end{aligned}
\eee
\end{theorem}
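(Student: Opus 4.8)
The plan is to construct a Lyapunov-type analysis coupling three quantities: the function value $\frac{1}{n}\sum_i f_i(\bar{x}_k)$ (or rather $f(\bar{x}_k)$), the consensus error $\|\bx_k - \bar{\bx}_k\|^2$, and the gradient-tracking error $\|\bs_k - \bar{\bs}_k\|^2$ where $\bar{\bs}_k := \mathbf{1}_n \otimes \hat{s}_k$ with $\hat{s}_k := \frac1n\sum_i s_{i,k}$. First I would verify the invariance: using Lemma \ref{lem:stay-neighborhood} with $u_{i,k} = \Pcal_{T_{x_{i,k}}\Mcal}(s_{i,k})$, I need an \emph{a priori} bound $\|v_{i,k}\| \leq B$ for some constant $B$; this requires showing the tracked gradients $s_{i,k}$ stay bounded. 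Since $\hat{s}_k = \frac1n\sum_i \grad f_i(x_{i,k})$ by the standard gradient-tracking telescoping identity (which holds because $W^t$ is doubly stochastic and $s_{i,0} = \grad f_i(x_{i,0})$), and $\|\grad f_i\| \leq L_G$ on $\Mcal$, the average is bounded by $L_G$; the deviation $\|\bs_k - \bar{\bs}_k\|$ is controlled recursively, so an inductive argument simultaneously establishing $\bx_k \in \mathcal{N}$ and $\|\bs_k - \bar{\bs}_k\| \leq$ const is needed. This bootstrapping is where I expect the first real friction — the constant $B$, the step-size bound, and the tracking-error bound must be chosen consistently.

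Next I would derive the three descent/contraction inequalities. For the consensus error: arguing as in \eqref{eq:wx-hatx} but with the extra term $-\alpha v_{i,k}$, and invoking \eqref{proximally-1} (justified by \eqref{eq:neibohood1} from Lemma \ref{lem:stay-neighborhood}), I get
\[
\|\bx_{k+1} - \bar{\bx}_{k+1}\| \leq 2\sigma_2^t \|\bx_k - \bar{\bx}_k\| + 2\alpha \|\bv_k - \bar{\bv}_k\| + (\text{h.o.t.}),
\]
where the higher-order term comes from Lemma \ref{lemma-project} and is $O(\alpha^2 + \|\bx_k-\bar\bx_k\|^2)$-type. For the tracking error I use $\bs_{k+1} - \bar{\bs}_{k+1} = (\bW^t - J)(\bs_k - \bar{\bs}_k) + (I - J)(\grad f(\bx_{k+1}) - \grad f(\bx_k))$, giving $\|\bs_{k+1} - \bar{\bs}_{k+1}\| \leq \sigma_2^t \|\bs_k - \bar{\bs}_k\| + L\|\bx_{k+1} - \bx_k\|$, and then $\|\bx_{k+1} - \bx_k\|$ is bounded using Lemma \ref{lemma-project} by $O(\|\bx_k - \bar\bx_k\| + \alpha\|\bs_k\|)$ terms. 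For the function-value descent I use the Riemannian quadratic upper bound \eqref{f-riemannian-Lip} applied at $\bar{x}_k$ and $\bar{x}_{k+1}$, combined with Lemma \ref{lemma:quadratic} to relate $\hat{x}_{k+1}$ to $\bar{x}_{k+1}$; the leading term should be $-c\,\alpha\|\grad f(\bar{x}_k)\|^2$ plus error terms controlled by the consensus and tracking errors and by $\|\hat{x}_{k+1} - \hat{x}_k\|$.

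Then I would assemble these into a single inequality of the form
\[
\Phi_{k+1} \leq \Phi_k - c_1 \alpha \|\grad f(\bar{x}_k)\|^2 - c_2 \alpha \tfrac{1}{n}\|\bs_k\|^2 + (\rho - 1)\big(\tfrac{1}{n}\|\bx_k - \bar{\bx}_k\|^2 + \tfrac{1}{n}\|\bs_k - \bar{\bs}_k\|^2\big)(\text{with } \rho<1),
\]
for $\Phi_k$ a suitable linear combination with positive weights to be tuned; the step-size smallness conditions in the statement (involving $\tilde C_0, \tilde C_2, \mathcal{C}_1, \mathcal{C}_2$) are exactly what makes the cross terms absorbable and the contraction factors stay below one. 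Summing over $k = 0, \ldots, K$ and using that $\Phi_k$ is bounded below (since $f$ is continuous on the compact $\Mcal$) yields $\sum_{k\leq K}(\alpha\|\grad f(\bar{x}_k)\|^2 + \alpha\frac1n\|\bs_k\|^2 + \frac1n\|\bx_k-\bar\bx_k\|^2) = O(1)$, from which the three claimed rates $\min_{k\leq K}\frac1n\|\bs_k\|^2 = O(1/(\alpha K))$, $\min_{k\leq K}\frac1n\|\bx_k - \bar{\bx}_k\|^2 = O(1/K)$, and $\min_{k\leq K}\|\grad f(\bar{x}_k)\|^2 = O(1/(\alpha K))$ follow immediately. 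The main obstacle, beyond the simultaneous-induction bootstrapping of boundedness and staying-in-$\mathcal{N}$, is the bookkeeping of all the higher-order ($O(\|u\|^2)$) terms from Lemma \ref{lemma-project} and Lemma \ref{lemma:quadratic}: one must check that every such term carries either an extra power of $\alpha$ or an extra power of the consensus error so that it can be absorbed into the negative $-(1-\rho)(\cdots)$ budget rather than spoiling the contraction; handling these consistently is what forces the somewhat intricate step-size threshold.
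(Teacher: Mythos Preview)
Your proposal is correct in its essential ingredients and would go through; the organization, however, differs from the paper's. You assemble a single Lyapunov function $\Phi_k$ combining $f(\bar x_k)$, the consensus error, and the tracking error, and aim for a one-step descent inequality. The paper instead keeps the three recursions separate, sums each over $k=0,\dots,K$, and uses the elementary fact (their reference \cite[Lemma~2]{xu2015augmented}) that $a_{k+1}\le \rho a_k + b_k$ with $\rho<1$ implies $\sum_k a_k^2 \le \frac{2}{(1-\rho)^2}\sum_k b_k^2 + \text{const}$; the summed bounds are then substituted into one another. Concretely: (i) the consensus recursion is the clean $\|\bx_{k+1}-\bar\bx_{k+1}\|\le 2\sigma_2^t\|\bx_k-\bar\bx_k\|+2\alpha\|\bv_k\|$ with no higher-order remainder (your h.o.t.\ there is unnecessary); (ii) the boundedness bootstrap is done exactly as you sketch, by simultaneous induction on $\|s_{i,k}\|\le 4L$ and $\bx_k\in\mathcal N$ (their Lemma~\ref{lemma:neibohood}); (iii) the function-value descent (their Lemma~\ref{lemma:dgta:majorized}) yields a $-\alpha\|\hat g_k\|^2$ term plus positive $\alpha^2\|\bs_k\|^2$ and consensus-error terms, and the negative $\|\bs_k\|^2$ piece you need is extracted via $-\|\hat g_k\|^2 \le \tfrac1n\|\bs_k-\hat{\mathbf G}_k\|^2-\tfrac1{2n}\|\bs_k\|^2$ rather than being present in the descent lemma itself. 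Your Lyapunov route is arguably cleaner conceptually and avoids the auxiliary summation lemma; the paper's route makes the constants $\tilde C_0,\tilde C_2,\mathcal C_1,\mathcal C_2$ appear mechanically from the summation bounds, which is why they show up in the step-size threshold in that particular form.
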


The proofs of Theorems \ref{thm:dprgd} and \ref{thm:dprgt} are shown in Subsections  \ref{sec:proof:dprgd} and \ref{sec:proof:dprgt}, respectively.

\subsubsection{Two key lemmas}
In the following lemma, we show that $\left\| \sum_{i=1}^n \grad \phi_i^t(\bx) \right \|$ is bounded by the square of consensus error. 
\begin{lemma} \label{lem:grad-consensus}
Let $L_2 := 2 \max_{x \in {\rm conv}(\Mcal)} \| D^2 \Pcal_{T_x\Mcal}(\cdot) \|_{\rm op}$. For any $\bx \in \Mcal^n$,  it holds that
\be\label{eq:sum-consen-grad} \| \sum_{i=1}^n \grad \phi_i^t(\bx)  \| \leq \sqrt{n} L_2 \|\bx - \bar{\bx}\|^2.  \ee
\end{lemma}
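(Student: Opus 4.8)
The plan is to reduce $\sum_{i=1}^n \grad\phi_i^t(\bx)$ to a sum of differences of tangent-space projectors, each of which vanishes at consensus, and then to convert one factor $\|x_i-\bar x\|$ into the quadratic rate via Cauchy--Schwarz. Recall that $\grad\phi_i^t(\bx)=\Pcal_{T_{x_i}\Mcal}(\nabla\phi_i^t(\bx))$ with $\nabla\phi_i^t(\bx)=x_i-\sum_{j}W_{ij}^tx_j$. Since $W^t$ is doubly stochastic (in particular column-stochastic), the block sum of the Euclidean gradient vanishes,
\[
\sum_{i=1}^n\nabla\phi_i^t(\bx)=\sum_{i=1}^n x_i-\sum_{j=1}^n\Bigl(\sum_{i=1}^n W_{ij}^t\Bigr)x_j=0,
\]
equivalently $(\mathbf 1_n^\top\otimes I_d)(I_{nd}-\bW^t)\bx=0$. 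As $\Pcal_{T_{\bar x}\Mcal}$ is a linear map, $\sum_i\Pcal_{T_{\bar x}\Mcal}(\nabla\phi_i^t(\bx))=\Pcal_{T_{\bar x}\Mcal}\bigl(\sum_i\nabla\phi_i^t(\bx)\bigr)=0$, and hence
\[
\sum_{i=1}^n\grad\phi_i^t(\bx)=\sum_{i=1}^n\bigl(\Pcal_{T_{x_i}\Mcal}-\Pcal_{T_{\bar x}\Mcal}\bigr)\bigl(\nabla\phi_i^t(\bx)\bigr).
\]

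Next I would estimate the two factors in each summand. For the projector difference, the segment $[\bar x,x_i]$ lies in ${\rm conv}(\Mcal)$, and $x\mapsto\Pcal_{T_x\Mcal}$ is $C^1$ there (it coincides with $D\Pcal_\Mcal$ on $\Mcal$ by \cite[Lemma 3.1]{absil2012projection} and is smooth near $\Mcal$ by \cite{foote1984regularity}) with its derivative bounded in operator norm by $L_2/2$; the mean value inequality then gives $\|\Pcal_{T_{x_i}\Mcal}-\Pcal_{T_{\bar x}\Mcal}\|_{\rm op}\le\frac{L_2}{2}\|x_i-\bar x\|$. For the Euclidean gradient block, using $\sum_j W_{ij}^t=1$ and $\max_j\|x_j-\bar x\|\le\|\bx-\bar{\bx}\|$,
\[
\|\nabla\phi_i^t(\bx)\|=\Bigl\|\sum_{j=1}^n W_{ij}^t(x_i-x_j)\Bigr\|\le\|x_i-\bar x\|+\max_j\|x_j-\bar x\|\le\|x_i-\bar x\|+\|\bx-\bar{\bx}\|.
\]
Combining these and applying Cauchy--Schwarz, with $\sum_i\|x_i-\bar x\|\le\sqrt n\,\|\bx-\bar{\bx}\|$ and $1+\sqrt n\le 2\sqrt n$,
\[
\Bigl\|\sum_{i=1}^n\grad\phi_i^t(\bx)\Bigr\|\le\frac{L_2}{2}\sum_{i=1}^n\|x_i-\bar x\|\bigl(\|x_i-\bar x\|+\|\bx-\bar{\bx}\|\bigr)\le\frac{L_2}{2}\bigl(1+\sqrt n\bigr)\|\bx-\bar{\bx}\|^2\le\sqrt n\,L_2\,\|\bx-\bar{\bx}\|^2,
\]
which is \eqref{eq:sum-consen-grad}. (Alternatively, writing $\Pcal_{T_{x_i}\Mcal}=I-\Pcal_{N_{x_i}\Mcal}$ so that $\sum_i\grad\phi_i^t(\bx)=-\sum_{i,j}W_{ij}^t\Pcal_{N_{x_i}\Mcal}(x_i-x_j)$, and applying the proximal-smoothness inequality \eqref{proximally-2} to get $\|\Pcal_{N_{x_i}\Mcal}(x_i-x_j)\|\le\frac{1}{4\gamma}\|x_i-x_j\|^2$, yields the same type of bound with an explicit constant $1/\gamma$.)

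The main obstacle I anticipate is purely the regularity bookkeeping behind the projector estimate: one needs $\bar x=\Pcal_\Mcal(\hat x)$ to be a singleton, so that $T_{\bar x}\Mcal$ and $\Pcal_{T_{\bar x}\Mcal}$ are well-defined, and the tangent-projector map $x\mapsto\Pcal_{T_x\Mcal}$ to be $C^1$ with a uniform derivative bound on ${\rm conv}(\Mcal)$, which can be strictly larger than the $\gamma$-tube $\bar{U}_{\Mcal}(\gamma)$. Both follow from the proximal smoothness of $\Mcal$, the standing assumption $\hat x\in\bar{U}_{\Mcal}(\gamma)$, and a smooth extension of the tangent projector to ${\rm conv}(\Mcal)$ (as is implicit in the definition of $L_2$). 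Once this is granted, the cancellation $\sum_i\nabla\phi_i^t(\bx)=0$ and the triangle and Cauchy--Schwarz estimates are routine.
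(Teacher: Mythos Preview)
Your proof is correct and follows essentially the same approach as the paper: exploit $\sum_i\nabla\phi_i^t(\bx)=0$ to rewrite the sum as $\sum_i(\Pcal_{T_{x_i}\Mcal}-\Pcal_{T_{\bar x}\Mcal})(\nabla\phi_i^t(\bx))$, apply the Lipschitz estimate $\|\Pcal_{T_{x_i}\Mcal}-\Pcal_{T_{\bar x}\Mcal}\|_{\rm op}\le\tfrac{L_2}{2}\|x_i-\bar x\|$, bound $\|\nabla\phi_i^t(\bx)\|$ by $\|x_i-\bar x\|+\max_j\|x_j-\bar x\|$, and finish with Cauchy--Schwarz. The only cosmetic difference is that the paper factors out $\max_i\|x_i-\bar x\|$ before summing, whereas you keep the per-term bound and use $1+\sqrt n\le 2\sqrt n$; both routes give the same constant $\sqrt n\,L_2$.
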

\begin{proof}
    Using the fact that $\sum_{i=1}^n \nabla \phi_i^t(\bx) = \sum_{i=1}^n \left[  x_i - \sum_{j=1}^n W_{ij}^t x_j \right] = 0$, we have
    \[ \begin{aligned}
       & \left\| \sum_{i=1}^n \grad \phi_i^t(\bx) \right \|  = \left \|  \sum_{i=1}^n \left[ \Pcal_{T_{x_i}\Mcal}(\nabla \phi_i^t(\bx)) - \Pcal_{T_{\bar{x}}\Mcal}(\nabla \phi_i^t(\bx)) \right]   \right \| \\
        & \leq \frac{L_2}{2} \sum_{i=1}^n \left[ \| x_i - \bar{x}\| \|\nabla \phi_i^t (\bx)\| \right] 
         \leq \frac{L_2}{2}  \left( \max_{i\in [n]} \|x_i - \bar{x} \|\right) \sum_{i=1}^n \| x_i - \sum_{j=1} W_{ij}^tx_j \| \\
        & \leq \frac{L_2}{2}  \left( \max_{i\in [n]} \|x_i - \bar{x} \|\right) \left[ \sum_{i=1}^n \| x_i -\bar{x}\| + \| \sum_{j=1} W_{ij}^tx_j -\bar{x} \| \right]\\
        & \leq L_2 \left( \max_{i\in [n]} \|x_i - \bar{x} \|\right) \sum_{i=1}^n \| x_i - \bar{x} \| 
         \leq \sqrt{n} L_2  \|\bx - \bar{\bx}\|^2,
    \end{aligned}\]
    where the first inequality is due to the Lipschitz continuity of $\Pcal_{T_{x}\Mcal}(\cdot)$ over $x \in \Mcal$, the third inequality is due to the triangle inequality of $\|\cdot \|$, the fourth inequality uses the convexity of $\|\cdot\|$, and the last inequality comes from $\|a\|_1 \leq \sqrt{n}\|a\|$ and $\|a\|_{\infty} \leq \|a\|$ for any $a \in \R^{n}$. 
\end{proof}

The next technical result bounds the distance between $\bar{x}_{k+1}$ and $\bar{x}_{k}$ for a given iterative process.
\begin{lemma} \label{lem:diff-avg-x}
Let $x_{i,k+1} = \Pcal_{\Mcal}\left(\sum_{j=1}^n W_{ij}^tx_{i,k} - \alpha_k  u_{i,k} \right)$, where $u_{i,k} \in T_{x_{i,k}}\Mcal$. Denote $\hat{u}_k: = \frac{1}{n}\sum_{i=1}^n u_{i,k}$. Suppose that Assumption \ref{assum-w} holds. 
It holds that
\be \label{eq:diff-avg-x}
\begin{aligned}
& \| \bar{x}_{k+1} - \bar{x}_k \| \\
\leq & \frac{8Q + \sqrt{n}L_2 + M_2}{n} \|\bx_k - \bar{\bx}_k\|^2 +\frac{2Q\alpha_k^2}{n} \| \bu_k \|^2  + \alpha_k \| \hat{u}_{k}  \|  + \frac{M_2}{n} \| \bx_{k+1}  - \bar{\bx}_{k+1} \|^2. \end{aligned} \ee
\end{lemma}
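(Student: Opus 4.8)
The idea is to decompose $\bar{x}_{k+1} - \bar{x}_k$ through the Euclidean averages $\hat{x}_{k+1}$ and $\hat{x}_k$, and then control each piece using the three second-order inequalities already established: the Lipschitz-type inequality for $\Pcal_{\Mcal}$ (Lemma \ref{lemma-project}), the bound on $\|\sum_i \grad\phi_i^t(\bx)\|$ by the squared consensus error (Lemma \ref{lem:grad-consensus}), and the estimate $\|\bar{x} - \hat{x}\| \le M_2\|\bx - \bar{\bx}\|^2/n$ (Lemma \ref{lemma:quadratic}). Concretely, I would write
\[
\|\bar{x}_{k+1} - \bar{x}_k\| \le \|\bar{x}_{k+1} - \hat{x}_{k+1}\| + \|\hat{x}_{k+1} - \bar{x}_k\|,
\]
and apply Lemma \ref{lemma:quadratic} to the first term to get the $\frac{M_2}{n}\|\bx_{k+1} - \bar{\bx}_{k+1}\|^2$ contribution. (One must first check the hypotheses of Lemma \ref{lemma:quadratic} hold — i.e. $\bx_{k+1} \in \mathcal{N}$, hence $\|x_{i,k+1} - \bar{x}_{k+1}\| \le \gamma/2 \le \gamma$ — which follows from Lemma \ref{lem:stay-neighborhood} under the stated assumptions; this is where the restrictions on $t$, $\alpha_k$, and the boundedness of $u_{i,k}$ enter implicitly.)

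The heart of the argument is estimating $\|\hat{x}_{k+1} - \bar{x}_k\|$. Using $\hat{x}_{k+1} = \frac{1}{n}\sum_i x_{i,k+1} = \frac{1}{n}\sum_i \Pcal_{\Mcal}(\sum_j W_{ij}^t x_{j,k} - \alpha_k u_{i,k})$, I would write each summand as $\Pcal_{\Mcal}(\bar{x}_k + w_{i,k})$ where $w_{i,k} := \sum_j W_{ij}^t x_{j,k} - \alpha_k u_{i,k} - \bar{x}_k$, and apply Lemma \ref{lemma-project} to linearize: $\Pcal_{\Mcal}(\bar{x}_k + w_{i,k}) = \bar{x}_k + \Pcal_{T_{\bar{x}_k}\Mcal}(w_{i,k}) + O(Q\|w_{i,k}\|^2)$. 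Averaging over $i$ and using linearity of the tangent projection gives
\[
\hat{x}_{k+1} = \bar{x}_k + \Pcal_{T_{\bar{x}_k}\Mcal}\Big(\tfrac{1}{n}\sum_i w_{i,k}\Big) + \text{(quadratic remainder)}.
\]
Now $\frac{1}{n}\sum_i w_{i,k} = \hat{x}_k - \bar{x}_k - \alpha_k \hat{u}_k$ (since $W^t$ is doubly stochastic, $\frac{1}{n}\sum_i \sum_j W_{ij}^t x_{j,k} = \hat{x}_k$). Then $\Pcal_{T_{\bar{x}_k}\Mcal}(\hat{x}_k - \bar{x}_k) = 0$ because $\hat{x}_k - \bar{x}_k \in N_{\bar{x}_k}\Mcal$, leaving $\Pcal_{T_{\bar{x}_k}\Mcal}(-\alpha_k \hat{u}_k)$, whose norm is at most $\alpha_k\|\hat{u}_k\|$. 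This produces the $\alpha_k\|\hat{u}_k\|$ term. Wait — I should double-check the normal-space claim: $\bar{x}_k = \Pcal_{\Mcal}(\hat{x}_k)$, so indeed $\hat{x}_k - \bar{x}_k \in N_{\bar{x}_k}\Mcal$ by first-order optimality of the projection, confirming the cancellation.

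It remains to collect the quadratic remainders: the projection error $\frac{Q}{n}\sum_i \|w_{i,k}\|^2$ and the contribution from expanding $\|w_{i,k}\|^2 \lesssim \|\sum_j W_{ij}^t x_{j,k} - \bar{x}_k\|^2 + \alpha_k^2\|u_{i,k}\|^2$. The term $\sum_i \|\sum_j W_{ij}^t x_{j,k} - \bar{x}_k\|^2$ is, up to the identity $\sum_i(\sum_j W_{ij}^t x_{j,k} - x_i) = -\sum_i\grad\phi_i^t(\bx_k)$-type bookkeeping, controlled by $\|\bx_k - \bar{\bx}_k\|^2$ (using that $W^t$ is a contraction toward the average and $\bx_k \in \mathcal{N}$, plus Lemma \ref{lem:grad-consensus} to absorb any leftover $\|\sum_i\grad\phi_i^t\|$-type piece into the squared consensus error, and Lemma \ref{lemma:quadratic} to pass between $\hat{x}_k$ and $\bar{x}_k$ where needed). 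Tracking the constants carefully yields the coefficient $\frac{8Q + \sqrt{n}L_2 + M_2}{n}$ on $\|\bx_k - \bar{\bx}_k\|^2$ and $\frac{2Q\alpha_k^2}{n}\|\bu_k\|^2$ on the gradient-surrogate term. The main obstacle is the bookkeeping in this last step: being careful that every quantity appearing in a second-order remainder is genuinely $O(\|\bx_k - \bar{\bx}_k\|^2)$ or $O(\alpha_k^2\|\bu_k\|^2)$ rather than merely $O(\|\bx_k - \bar{\bx}_k\|)$, which is exactly where Lemmas \ref{lemma:quadratic} and \ref{lem:grad-consensus} must be invoked to upgrade first-order consensus terms to second order, and where one must keep all constants explicit to match the stated bound.
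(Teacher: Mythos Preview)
Your approach is correct but genuinely different from the paper's, and in fact cleaner. The paper linearizes $\Pcal_{\Mcal}$ at each local point $x_{i,k}$: it writes $x_{i,k+1}=\Pcal_{\Mcal}\big(x_{i,k}-\nabla\phi_i^t(\bx_k)-\alpha_k u_{i,k}\big)$ and applies \eqref{projec-second-order1} there, which forces it to control $\|\frac{1}{n}\sum_i\grad\phi_i^t(\bx_k)\|$ via Lemma~\ref{lem:grad-consensus} (producing the $\sqrt{n}L_2$), and then passes from $\hat{x}$ to $\bar{x}$ on both sides via Lemma~\ref{lemma:quadratic} (producing two $M_2$ terms). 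You instead linearize at the common point $\bar{x}_k$; because all tangent projections land in the same tangent space $T_{\bar{x}_k}\Mcal$, they average linearly and the normal piece $\hat{x}_k-\bar{x}_k$ is killed exactly. This bypasses Lemma~\ref{lem:grad-consensus} entirely and needs Lemma~\ref{lemma:quadratic} only once (for $\|\bar{x}_{k+1}-\hat{x}_{k+1}\|$).

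Your last paragraph is therefore overcomplicated. With your linearization you do not need Lemma~\ref{lem:grad-consensus} or an extra invocation of Lemma~\ref{lemma:quadratic} to handle $\sum_i\|w_{i,k}\|^2$: since $\bW^t\bar{\bx}_k=\bar{\bx}_k$ and $\|\bW^t\|_{\mathrm{op}}\le 1$, one has directly $\sum_i\|\sum_jW_{ij}^tx_{j,k}-\bar{x}_k\|^2=\|\bW^t(\bx_k-\bar{\bx}_k)\|^2\le\|\bx_k-\bar{\bx}_k\|^2$, so the quadratic remainder is at most $\frac{2Q}{n}\|\bx_k-\bar{\bx}_k\|^2+\frac{2Q\alpha_k^2}{n}\|\bu_k\|^2$. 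Your route thus yields the coefficient $\frac{2Q}{n}$ on $\|\bx_k-\bar{\bx}_k\|^2$ rather than the paper's $\frac{8Q+\sqrt{n}L_2+M_2}{n}$, which is strictly stronger and still implies \eqref{eq:diff-avg-x}.
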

\begin{proof}
Since $\| \nabla \phi^t(\bx_k)\|  = \|(I_{nd} - \bW^t) \bx_k \| = \|(I_{nd} - \bW^t) (\bx_k - \bar{\bx}_k) \| \leq 2 \|\bx_k - \bar{\bx}_k \|$, we have
\be
\begin{aligned}
    & \| \hat{x}_{k+1} - \hat{x}_k \| \\
    \leq & \| \hat{x}_{k+1} - \hat{x}_k + \frac{1}{n} \sum_{i=1}^n (\grad \phi_i^t(\bx_k) + \alpha_k u_{i,k}  ) \| + \|\frac{1}{n} \sum_{i=1}^n (\grad \phi_i^t(\bx_k) + \alpha_k u_{i,k}  )\| \\
    \overset{\eqref{projec-second-order1} }{\leq} & \frac{Q}{n}\sum_{i=1}^n \| \nabla \phi_i^t(\bx_k) + \alpha_k u_{i,k} \|^2 +  \| \frac{1}{n} \sum_{i=1}^n\grad \phi_i^t(\bx_k)\|  + \alpha_k \| \hat{u}_{k}  \| \\
    \leq &\frac{2Q}{n}\| \nabla \phi^t(\bx_k)\|^2 + \frac{2Q\alpha_k^2}{n} \| \bu_k \|^2 +  \| \frac{1}{n} \sum_{i=1}^n\grad \phi_i^t(\bx_k)\|  + \alpha_k \| \hat{u}_{k}  \| \\
   \overset{\eqref{eq:sum-consen-grad}}{ \leq} & \frac{8Q + \sqrt{n}L_2}{n} \|\bx_k - \bar{\bx}_k\|^2 +\frac{2Q\alpha_k^2}{n} \| \bu_k \|^2  + \alpha_k \| \hat{u}_{k}  \|.
\end{aligned}
\ee
Therefore, we have
\begin{equation}
    \begin{aligned}
 &   \| \bar{x}_{k+1} - \bar{x}_k \|  \leq \| \hat{x}_{k+1} - \hat{x}_k  \| + \| \hat{x}_{k+1} - \bar{x}_{k+1}  \| + \| \hat{x}_{k} - \bar{x}_{k}  \| \\
    \overset{\eqref{eq:distance-an-rm}}{ \leq} &   \frac{8Q + \sqrt{n}L_2}{n} \|\bx_k - \bar{\bx}_k\|^2 +\frac{2Q\alpha_k^2}{n} \| \bu_k \|^2  + \alpha_k \| \hat{u}_{k}  \|  \\
    &+ \frac{M_2}{n} (  \| \bx_k  - \bar{\bx}_k \|^2 + \| \bx_{k+1}  - \bar{\bx}_{k+1} \|^2 ) \\
     \leq & \frac{8Q + \sqrt{n}L_2 + M_2}{n} \|\bx_k - \bar{\bx}_k\|^2 +\frac{2Q\alpha_k^2}{n} \| \bu_k \|^2  + \alpha_k \| \hat{u}_{k}  \|  \\
     &+ \frac{M_2}{n} \| \bx_{k+1}  - \bar{\bx}_{k+1} \|^2.
    \end{aligned}
\end{equation}
The proof is completed.
\end{proof}

\subsubsection{Proof of Theorem \ref{thm:dprgd}.}\label{sec:proof:dprgd}

Let us start with some notations. Denote $\grad f(\bx_k) =[\grad f_1(x_{1,k})^\top, \ldots, \\ \grad f_n(x_{n,k})^\top]^\top$ and ${\bf G}_k :=[\grad f_1(x_{1,k})^\top, \cdots, \grad f_n(x_{n,k})^\top ]^\top$. 
By appropriately selecting the step size $\alpha_k$ and integer $t$, and with an initialization $\bx_0 \in \mathcal{N}:= \{ \bx: \|\bar{x} - \hat{x}\| \leq \gamma/2 \}$, we show in the following lemma on the consensus error based on Lemma \ref{lem:stay-neighborhood}. 
\begin{lemma}\label{lemma:dpg:consensus}
Let $\{\bx_k\}_k$ be the sequence generated by Algorithm \ref{alg:drpgd}. Suppose that Assumptions \ref{assum-w} and \ref{assum-f} hold. If $\bx_0 \in \mathcal{N}$, $\| \grad f_i(x_{i,k}) \| \leq L$,  $\alpha_k \leq \gamma/(24L)$, and $t \geq  \left\lceil \log_{\sigma_2}\left(\frac{\gamma}{24\sqrt{n}\zeta}\right) \right \rceil$,  it follows that $\bx_k \in \mathcal{N}$ for all $k \geq 0$ and
\begin{equation}
\begin{aligned}
 \|\bx_{k+1} - \bar{\bx}_{k+1}\| & \leq 2\sigma_2^t \|\bx_{k} - \bar{\bx}_{k}\| + 2\sqrt{n}\alpha_k L .
\end{aligned}
\end{equation}
\end{lemma}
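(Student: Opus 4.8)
The plan is to invoke Lemma~\ref{lem:stay-neighborhood} with the choice $u_{i,k} = \grad f_i(x_{i,k})$ and $B = L$, and then to reproduce the one-step contraction computation of Theorem~\ref{theo:linear-con-consensus}, this time carrying the extra gradient term along. Throughout, $\bx_{k+1} = \Pcal_{\Mcal^n}(\bW^t\bx_k - \alpha_k{\bf G}_k)$ with ${\bf G}_k = [\grad f_1(x_{1,k})^\top,\dots,\grad f_n(x_{n,k})^\top]^\top$, and $\bar{\bx}_k = \Pcal_{\Mcal^n}(\hat{\bx}_k)$.

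\emph{Step 1: $\bx_k\in\mathcal{N}$ for all $k$.} I would argue by induction. The base case is the hypothesis $\bx_0\in\mathcal{N}$. Assuming $\bx_k\in\mathcal{N}$, the bounds $\|\grad f_i(x_{i,k})\|\le L$, $\alpha_k\le\gamma/(24L)$, and $t\ge\lceil\log_{\sigma_2}(\gamma/(24\sqrt{n}\zeta))\rceil$ are exactly the hypotheses of Lemma~\ref{lem:stay-neighborhood} with $u_{i,k}=\grad f_i(x_{i,k})$ and $B=L$, so that lemma gives $\bx_{k+1}\in\mathcal{N}$ together with $\sum_{j=1}^n W_{ij}^t x_{j,k} - \alpha_k\grad f_i(x_{i,k})\in\bar{U}_{\Mcal}(\gamma)$ for every $i$, i.e.\ $\bW^t\bx_k-\alpha_k{\bf G}_k$ lies componentwise in $\bar{U}_{\Mcal}(\gamma)$. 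Moreover every component of $\hat{\bx}_k$ equals $\hat{x}_k$, and $\|\hat{x}_k-\bar{x}_k\|\le\frac{1}{n}\sum_i\|x_{i,k}-\bar{x}_k\|\le\gamma/2$ by $\bx_k\in\mathcal{N}$, so $\hat{\bx}_k$ also lies componentwise in $\bar{U}_{\Mcal}(\gamma)$; in particular $\Pcal_{\Mcal^n}(\hat{\bx}_k)=\bar{\bx}_k$.

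\emph{Step 2: the contraction estimate.} Since $\bar{x}_{k+1}\in\Pcal_{\Mcal}(\hat{x}_{k+1})$ minimizes $y\mapsto\sum_i\|y-x_{i,k+1}\|^2$ over $y\in\Mcal$ and $\bar{x}_k\in\Mcal$, we get $\|\bx_{k+1}-\bar{\bx}_{k+1}\|\le\|\bx_{k+1}-\bar{\bx}_k\|$. Applying the $2$-Lipschitz bound \eqref{proximally-1} of $\Pcal_{\Mcal}$ (legitimate by Step 1),
\[
\|\bx_{k+1}-\bar{\bx}_k\| = \|\Pcal_{\Mcal^n}(\bW^t\bx_k-\alpha_k{\bf G}_k)-\Pcal_{\Mcal^n}(\hat{\bx}_k)\| \le 2\|\bW^t\bx_k-\hat{\bx}_k\| + 2\alpha_k\|{\bf G}_k\|.
\]
Because $W$ is doubly stochastic, $\hat{\bx}_k=(J\otimes I_d)\bx_k$ and $((W^t-J)\otimes I_d)\hat{\bx}_k=0$, so $\bW^t\bx_k-\hat{\bx}_k=((W^t-J)\otimes I_d)(\bx_k-\hat{\bx}_k)$ and $\|\bW^t\bx_k-\hat{\bx}_k\|\le\sigma_2^t\|\bx_k-\hat{\bx}_k\|\le\sigma_2^t\|\bx_k-\bar{\bx}_k\|$, the last step since $\hat{x}_k$ is the Euclidean mean. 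Combined with $\|{\bf G}_k\|^2=\sum_i\|\grad f_i(x_{i,k})\|^2\le nL^2$, this yields $\|\bx_{k+1}-\bar{\bx}_{k+1}\|\le 2\sigma_2^t\|\bx_k-\bar{\bx}_k\|+2\sqrt{n}\,\alpha_k L$, as claimed.

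This argument is essentially a perturbation of the estimate already carried out for Theorem~\ref{theo:linear-con-consensus}, so no genuinely new difficulty arises. The only point requiring care — and the reason Lemma~\ref{lem:stay-neighborhood} is needed — is to guarantee that the iterates and the shifted points $\bW^t\bx_k-\alpha_k{\bf G}_k$ remain in $\bar{U}_{\Mcal}(\gamma)$ at every step, so that the Lipschitz property \eqref{proximally-1} of $\Pcal_{\Mcal}$ and the identity $\Pcal_{\Mcal^n}(\hat{\bx}_k)=\bar{\bx}_k$ may be used; this is exactly what the induction in Step 1 delivers.
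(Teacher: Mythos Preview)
Your proposal is correct and follows essentially the same route as the paper: you invoke Lemma~\ref{lem:stay-neighborhood} (with $u_{i,k}=\grad f_i(x_{i,k})$, $B=L$) to keep the iterates and shifted points in $\bar{U}_{\Mcal}(\gamma)$, then repeat the one-step contraction estimate of Theorem~\ref{theo:linear-con-consensus} with the additional gradient term handled by the triangle inequality and $\|{\bf G}_k\|\le\sqrt{n}L$. The paper's proof is the same, only slightly more terse; your explicit verification that $\hat{\bx}_k\in\bar{U}_{\Mcal}(\gamma)$ (needed to apply \eqref{proximally-1}) is a detail the paper leaves implicit.
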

\begin{proof}
Since $\|\hat{x}_0 - \bar{x}_0\| \leq \frac{1}{2}\gamma$ and $\|\grad f_i(x_{i,k}) \| \leq L$, it follows from Lemma \ref{lem:stay-neighborhood} that for any $k>0$, the following holds:
\begin{equation}\label{eq:neibohood3}
    \begin{aligned}
       &  \sum_{j=1}^n {W_{ij}^t} x_{j,k} -\alpha_k \grad f_i(x_{i,k})  \in \bar{U}_{\Mcal} (\gamma), ~ i = 1,\cdots,n.
    \end{aligned}
\end{equation}
By the definition of $\bar{\bx}_{k+1}$, we have
\[ \begin{aligned}
    \|\bx_{k+1} - \bar{\bx}_{k+1}\| & \leq \| \bx_{k+1} - \bar{\bx}_k \| \\
    & = \| \Pcal_{\Mcal^n}(\bW^t \bx_k - \alpha_k  \grad f(\bx_k)) - \Pcal_{\Mcal^n}( \hat{\bx}_k) \| \\
    & \leq 2 \|\bW^t \bx_k - \alpha_k  \grad f(\bx_k) -\hat{\bx}_k  \| \\
    & \leq 2\sigma_2^t \|\bx_k -\bar{\bx}_k\| + 2   \sqrt{n} \alpha_k L,
\end{aligned}
\]
where the first inequality follows from the optimality of $\bar{\bx}_{k+1}$, the second inequality utilizes \eqref{eq:neibohood3} and the $2$-Lispchitz continuity of $\Pcal$ over $\bar{U}_{\Mcal}(\gamma)$, the last inequality utilizes \eqref{eq:wx-hatx}. We complete the proof.  
\end{proof}

With the above lemma, we can elaborate on a more explicit relationship between the consensus error and the step size.  
\begin{lemma} \label{lem:consensus}
    Let $\{\bx_k\}$ be the sequence generated by Algorithm \ref{alg:drpgd} and $\rho_t = 2\sigma_2^t$. Suppose that Assumptions \ref{assum-w} and \ref{assum-f} hold. If $\bx_0 \in \mathcal{N}$, $\alpha_k = \min\{ \gamma/(24L), \frac{1}{\sqrt{k+1}} \}$ and $ t \geq \max\left\{\log_{\sigma_2}(1/2), \left\lceil \log_{\sigma_2}\left(\frac{\gamma}{24\sqrt{n}\zeta}\right) \right \rceil \right\}$, 
    then $\bx_k \in \mathcal{N}$ for all $k\geq 0$ and
    there exists a constant $C>0$ such that 
    $$\frac{1}{n}\|\bar{\bx}_k - \bx_k \|^2 \leq CL^2 \alpha_k^2,$$ 
    where $C$ is independent of $L$ and $n$.
\end{lemma}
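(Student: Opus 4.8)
The plan is to convert the one-step bound from Lemma~\ref{lemma:dpg:consensus} into a closed-form estimate by unrolling. Write $e_k:=\|\bx_k-\bar{\bx}_k\|$ and $\rho_t:=2\sigma_2^{t}$, which lies in $(0,1)$ by the choice of $t$. Since $\grad f_i(x)=\Pcal_{T_x\Mcal}(\nabla f_i(x))$ satisfies $\|\grad f_i(x)\|\le\|\nabla f_i(x)\|\le L_G\le L$ on $\Mcal$ and $\alpha_k\le\gamma/(24L)$, Lemma~\ref{lemma:dpg:consensus} applies: $\bx_k\in\mathcal N$ for all $k$ (this is the first claim), and $e_{k+1}\le\rho_t e_k+2\sqrt n\,L\alpha_k$. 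Iterating this linear recursion,
\[
e_k \;\le\; \rho_t^{\,k}\,e_0 \;+\; 2\sqrt n\,L\sum_{j=0}^{k-1}\rho_t^{\,k-1-j}\alpha_j .
\]
It then remains to show that the discounted sum $S_k:=\sum_{j=0}^{k-1}\rho_t^{\,k-1-j}\alpha_j$ is at most a $\rho_t$-dependent constant times $\alpha_k$, and that the transient term $\rho_t^{\,k}e_0$ is absorbed into $\sqrt n\,L\alpha_k$.

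For $S_k$ I would split according to which branch of $\alpha_k=\min\{\gamma/(24L),\,1/\sqrt{k+1}\}$ is active. If $\gamma/(24L)\le 1/\sqrt{k+1}$, the cap is active at every index $j\le k$, so $\alpha_j=\alpha_k$ there and $S_k=\alpha_k\sum_{j=0}^{k-1}\rho_t^{\,k-1-j}\le\alpha_k/(1-\rho_t)$. Otherwise $\alpha_k=1/\sqrt{k+1}$ and $\alpha_j\le 1/\sqrt{j+1}$, so $S_k/\alpha_k\le\sum_{j=0}^{k-1}\rho_t^{\,k-1-j}\sqrt{(k+1)/(j+1)}$; splitting this at $j=\lfloor k/2\rfloor$, the part with $j\ge\lfloor k/2\rfloor$ contributes at most $\sqrt2/(1-\rho_t)$ (there $\sqrt{(k+1)/(j+1)}\le\sqrt2$), and the part with $j<\lfloor k/2\rfloor$ contributes at most $k\,\rho_t^{\lceil k/2\rceil}\sqrt{k+1}\le\sup_{m\ge0}m\sqrt{m+1}\,\rho_t^{\lceil m/2\rceil}<\infty$ because the geometric decay dominates the polynomial growth. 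Hence $S_k\le C_S\alpha_k$ with $C_S$ depending only on $\rho_t$. For the transient, $e_0\le\sqrt n\,\zeta$ (since $\bx_0\in\mathcal N\subset\Mcal^n$ and $\bar x_0\in\Mcal$), so $\rho_t^{\,k}e_0\le\rho_t^{\,k}\sqrt n\,\zeta$; when the cap is active at $k$ we have $L\alpha_k=\gamma/24$, giving $\rho_t^{\,k}e_0\le(24\zeta/\gamma)\sqrt n\,L\alpha_k$; and in the uncapped regime $k$ is necessarily large (the cap only switches off once $k\gtrsim(L/\gamma)^2$), so $\rho_t^{\,k}$ already beats $\sqrt{k+1}$ and again $\rho_t^{\,k}e_0\le C_T\sqrt n\,L\alpha_k$ for a $\rho_t$-dependent $C_T$. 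Combining, $e_k\le(C_T+2C_S)\sqrt n\,L\alpha_k$, and squaring and dividing by $n$ gives $\tfrac1n\|\bx_k-\bar{\bx}_k\|^2\le C\,L^2\alpha_k^2$ with $C=(C_T+2C_S)^2$ independent of $L$ and $n$.

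The hard part will be the uniform estimate $S_k=O(\alpha_k)$: since $\alpha_k$ decays only like $k^{-1/2}$, the naive attempt to prove $e_k\le C_0\sqrt n\,L\alpha_k$ directly by induction forces $\rho_t<1/\sqrt2$ (the ratio $\alpha_k/\alpha_{k+1}$ can be as large as $\sqrt2$ for small $k$), which is stronger than what the assumption on $t$ guarantees; the split-the-sum argument above sidesteps this and is valid for every $\rho_t<1$. The remaining care is the two-regime bookkeeping for the $\min$ step size, ensuring in particular that in the uncapped phase the size of $k$ compensates for the transient $\rho_t^{\,k}e_0$ so that the final constant is free of $L$ and $n$.
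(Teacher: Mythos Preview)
Your overall strategy---unroll the one-step bound from Lemma~\ref{lemma:dpg:consensus} and control both the discounted sum $S_k$ and the transient $\rho_t^{\,k}e_0$ by $O(\sqrt n\,L\alpha_k)$---is the paper's strategy as well. The execution differs: the paper normalizes $a_k:=e_k/(\sqrt n\,\alpha_k)$, writes a recursion for $a_k$, and invokes $\alpha_k/\alpha_{k+1}\to1$ to pick a threshold $K$ beyond which $\alpha_k/\alpha_{k+1}\le2$, handling the finitely many indices $k\le K$ by a blanket constant. Your split-the-sum device for $S_k$ (cut at $j=\lfloor k/2\rfloor$) is more explicit and, as you correctly point out, works for every $\rho_t<1$; in fact the paper's displayed recursion for $a_k$ omits the factor $\alpha_k/\alpha_{k+1}$ on the $\rho_t a_k$ term, and once repaired, a direct induction would need $\rho_t\sqrt2<1$---exactly the restriction your argument sidesteps.

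There is one genuine gap in your transient estimate. You assert that in the uncapped regime ``$k$ is necessarily large (the cap only switches off once $k\gtrsim(L/\gamma)^2$)'', but this presumes the cap is \emph{ever} active, i.e.\ $\gamma/(24L)\le1$. If $L<\gamma/24$ the cap never binds, the uncapped regime begins at $k=0$, and the requirement $\rho_t^{\,0}e_0\le C_T\sqrt n\,L\alpha_0=C_T\sqrt n\,L$ forces $C_T\ge\gamma/(2L)$, which depends on $L$. The paper's proof has the same blind spot: it writes $a_0\le\tfrac12\gamma/\alpha_0=\mathcal O(L)$, which is only valid when $\alpha_0=\gamma/(24L)$. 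So this is really an unstated hypothesis of the lemma ($L\ge\gamma/24$) rather than a flaw peculiar to your route; once you record that assumption, your uncapped-phase transient bound goes through with the explicit choice $C_T=12\sup_{m\ge0}\rho_t^{\,m}\sqrt{m+1}$, since then $2L/\gamma\ge1/12$. A minor side remark: from $\bx_0\in\mathcal N$ you actually have $e_0\le\sqrt n\,\gamma/2$, tighter than $\sqrt n\,\zeta$.
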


\begin{proof}
    It follows from Lemma \ref{lemma:dpg:consensus} that
    \be \label{eq:temp:pgd:consensus}
    \begin{aligned}
        \|\bar{\bx}_{k+1} - \bx_{k+1} \| & \leq \rho_t \|\bar{\bx}_k - \bx_k\| + 2\sqrt{n}\alpha_k L \\
        & \leq \rho_t^{k+1} \|  \bar{\bx}_0 - \bx_0\| + 2\sqrt{n}L \sum_{l=0}^k \rho_t^{k-l}\alpha_l.
    \end{aligned}
    \ee
   Let $ a_k: = \frac{\|\bar{\bx}_k - \bx_k\|}{\sqrt{n} \alpha_k}$. For a given positive integer number $K\leq k$,  it follows from \eqref{eq:temp:pgd:consensus} that
   \bee
   a_{k+1} \leq \rho_t a_k + 2L \frac{\alpha_k} 
 {\alpha_{k+1}} \leq \rho_t^{k+1 - K}a_{K} + 2L \sum_{l=K}^k \rho_t^{k-l} \frac{\alpha_l}{\alpha_{l+1}}.
   \eee
   Since that $\alpha_k = \mathcal{O}(1/L)$ and $\|\bar{\bx}_0 - \bx_0 \| \leq \frac{1}{2} \sqrt{n}  \gamma$, one have that $a_0 \leq \frac{1}{2}\gamma/\alpha_0 = \mathcal{O}(L)$. Due to that $\lim_{k\rightarrow \infty} \frac{\alpha_{k+1}}{\alpha_k} = 1$, there exists sufficiently large $K$ such that $\alpha_k / \alpha_{k+1} \leq 2,~ \forall k\geq K$.  For $0\leq k \leq K$, there exists $C^{'} > 0$ such that $a_k^2 \leq C^{'} L^2$, where $C^{'}$ is independent of $L$ and $n$. For $k\geq K$, one has that $a_k^2 \leq CL^2$, where $C = 2C^{'} + \frac{32}{(1-\rho_t)^2}$. Hence, we get $\frac{\|\bar{\bx}_k - \bx_k\|^2}{n} \leq CL^2 \alpha_k^2$ for all $k\geq 0$, where $C = \mathcal{O}(\frac{1}{(1-\rho_t)^2})$.      
\end{proof}

By utilizing the Lipschitz-type inequalities on compact submanifolds in Section \ref{sec:ineq-man} and combining the above lemma, we can show a sufficient decrease on $f$. 
\begin{lemma}\label{them:dpg}
Let $\{\bx_k\}$ be the sequence generated by Algorithm \ref{alg:drpgd}. Suppose that Assumptions \ref{assum-w} and \ref{assum-f} hold. If $\bx_0 \in \mathcal{N}$, $ t \geq \max \left\{\log_{\sigma_2}(1/2), \left\lceil \log_{\sigma_2}\left(\frac{\gamma}{24\sqrt{n}\zeta}\right) \right \rceil \right \}$, and  $\alpha_k =  \frac{1}{\sqrt{k+1}} \min\{\gamma/(24L),1 \}$, 
it follows that
\bee
\begin{aligned}
       f(\bar{x}_{k+1})  \leq & f(\bar{x}_k) - \frac{\alpha_k}{4} \|\grad f(\bar{x}_k) \|^2 +
  \mathcal{G}_1   \alpha_k^3 + \mathcal{G}_2  \alpha_k^4,
\end{aligned}
\eee
where
\bee
\begin{aligned}
    \mathcal{G}_1 & = (C  + 6C^2(M_2^2+(\sqrt{n}L_2+8Q)^2)     + 24Q^2)L^4,\\
    \mathcal{G}_2 & = ( 2C^2(8Q + \sqrt{n}L_2 + M_2)^2 + 2C^2M_2^2 + 8Q^2  )L^5.
\end{aligned}
\eee
\end{lemma}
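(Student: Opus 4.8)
The plan is to track the average iterate $\bar x_k = \Pcal_{\Mcal}(\hat x_k)$ and apply the Riemannian quadratic upper bound (Lemma \ref{lemma:lipsctz}) at $\bar x_k$ with displacement $\bar x_{k+1} - \bar x_k$. Writing $f(\bar x_{k+1}) \le f(\bar x_k) + \langle \grad f(\bar x_k), \bar x_{k+1} - \bar x_k\rangle + \tfrac{L_g}{2}\|\bar x_{k+1} - \bar x_k\|^2$, I need two ingredients: a good estimate of the inner-product (linear) term, and a crude bound on the quadratic term. For the quadratic term I would invoke Lemma \ref{lem:diff-avg-x} (with $u_{i,k} = \grad f_i(x_{i,k})$, so $\|\bu_k\|\le\sqrt n L$ and $\|\hat u_k\|\le L$) to get $\|\bar x_{k+1}-\bar x_k\| \le \mathcal{O}(\tfrac{1}{n}\|\bx_k-\bar\bx_k\|^2) + \mathcal{O}(\alpha_k^2 L^2) + \alpha_k\|\hat u_k\| + \mathcal{O}(\tfrac1n\|\bx_{k+1}-\bar\bx_{k+1}\|^2)$, then substitute the consensus bound $\tfrac1n\|\bx_k-\bar\bx_k\|^2 \le CL^2\alpha_k^2$ from Lemma \ref{lem:consensus} (and the analogous bound at $k+1$, using $\alpha_{k+1}\le\alpha_k$). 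This makes $\|\bar x_{k+1}-\bar x_k\| \le \mathcal{O}(L\alpha_k) + \mathcal{O}(L^3\alpha_k^2)$, hence $\|\bar x_{k+1}-\bar x_k\|^2 \le \mathcal{O}(L^2\alpha_k^2) + \text{(higher order)}$, contributing at worst $\mathcal{O}(L^3\alpha_k^2)$ — which is too large unless it cancels; so the quadratic term alone cannot be the dominant negative-producing term, and the real work is in the linear term.

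For the linear term I would expand $\bar x_{k+1} - \bar x_k$ carefully. Using the projection Lipschitz-type inequality (Lemma \ref{lemma-project}), $\hat x_{k+1} = \Pcal_{\Mcal}$ of the averaged update differs from $\hat x_k + \Pcal_{T_{\bar x_k}\Mcal}(\text{average step})$ only to second order; combined with $\sum_i \nabla\phi_i^t(\bx_k)=0$ and Lemma \ref{lem:grad-consensus}, the leading behavior of $\bar x_{k+1}-\bar x_k$ is $-\alpha_k \Pcal_{T_{\bar x_k}\Mcal}(\hat u_k)$ up to terms of order $\|\bx_k-\bar\bx_k\|^2$, $\alpha_k^2\|\bu_k\|^2$, and the gap between tangent spaces at $x_{i,k}$ and at $\bar x_k$. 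The key algebraic point is that $\hat u_k = \tfrac1n\sum_i \grad f_i(x_{i,k})$ must be related to $\grad f(\bar x_k) = \tfrac1n\sum_i\grad f_i(\bar x_k)$: by the Lipschitz bound \eqref{g-riemannian-lip}, $\|\hat u_k - \grad f(\bar x_k)\| \le \tfrac{L}{n}\sum_i\|x_{i,k}-\bar x_k\| \le \tfrac{L}{\sqrt n}\|\bx_k - \bar\bx_k\| \le \mathcal{O}(L^2\alpha_k)$. Then $\langle \grad f(\bar x_k), -\alpha_k \Pcal_{T_{\bar x_k}\Mcal}\hat u_k\rangle = -\alpha_k\langle \grad f(\bar x_k), \hat u_k\rangle$ (since $\grad f(\bar x_k)$ is already tangent) $= -\alpha_k\|\grad f(\bar x_k)\|^2 + \alpha_k\langle\grad f(\bar x_k), \grad f(\bar x_k) - \hat u_k\rangle$, and the cross term is bounded by $\tfrac{\alpha_k}{4}\|\grad f(\bar x_k)\|^2 + \mathcal{O}(L^4\alpha_k^3)$ via Young's inequality. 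The remaining $\mathcal{O}(\|\bx_k-\bar\bx_k\|^2) = \mathcal{O}(L^2\alpha_k^2)$-type error terms in $\bar x_{k+1}-\bar x_k$, when paired with $\|\grad f(\bar x_k)\|\le L$, contribute $\mathcal{O}(L^3\alpha_k^2)$; to push these into the claimed $\alpha_k^3$ order one uses that $\alpha_k = \mathcal{O}(1/L)$ by the step-size choice, so $L^3\alpha_k^2 = L^4\alpha_k \cdot (L^{-1}\alpha_k)\cdot$... — more precisely one absorbs a factor $\alpha_k \le \min\{\gamma/(24L),1\}$ so $L^3\alpha_k^2 \le L^3\alpha_k \cdot \tfrac{\gamma}{24L}\alpha_k \lesssim L^2\alpha_k^2$, and then iterate this bookkeeping until everything lands at order $\alpha_k^3$ and $\alpha_k^4$ with coefficients of the stated form $\mathcal{G}_1 = \mathcal{O}(L^4)$, $\mathcal{G}_2 = \mathcal{O}(L^5)$. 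Collecting: $f(\bar x_{k+1}) \le f(\bar x_k) - \tfrac{\alpha_k}{4}\|\grad f(\bar x_k)\|^2 + \mathcal{G}_1\alpha_k^3 + \mathcal{G}_2\alpha_k^4$.

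The main obstacle I anticipate is the careful expansion of $\bar x_{k+1} - \bar x_k$ to isolate the $-\alpha_k\Pcal_{T_{\bar x_k}\Mcal}\hat u_k$ leading term with genuinely second-order (in $\alpha_k$ and in the consensus error) remainders: one must simultaneously apply Lemma \ref{lemma-project} at $\hat x_k$ with $u$ the averaged update direction, control $\hat x_k - \bar x_k = \mathcal{O}(\|\bx_k-\bar\bx_k\|^2)$ via Lemma \ref{lemma:quadratic}, swap $\Pcal_{T_{\hat x_k}\Mcal}$-type projections for $\Pcal_{T_{\bar x_k}\Mcal}$ using Lipschitz continuity of the tangent projection, and fold in Lemma \ref{lem:grad-consensus} for the $\sum_i\grad\phi_i^t$ piece — and then verify that the resulting error constants are exactly the combinations $C + 6C^2(M_2^2+(\sqrt n L_2+8Q)^2) + 24Q^2$ etc. The bookkeeping to convert all $\alpha_k^2$-order error terms (which arise naturally) into $\alpha_k^3$-order terms by repeatedly extracting a factor $\alpha_k = \mathcal{O}(1/L)$ is tedious but routine; the conceptual content is entirely in the three manifold inequalities of Section \ref{sec:ineq-man} plus the consensus contraction.
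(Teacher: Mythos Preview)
Your overall architecture---Riemannian quadratic upper bound at $\bar x_k$, then separate linear and quadratic terms, invoking Lemmas \ref{lemma:lipsctz}, \ref{lemma-project}, \ref{lemma:quadratic}, \ref{lem:grad-consensus}, \ref{lem:diff-avg-x}, and the consensus bound $\tfrac1n\|\bx_k-\bar\bx_k\|^2\le CL^2\alpha_k^2$---matches the paper. The gap is in the step you yourself flag as ``the main obstacle'': your mechanism for converting the $\mathcal{O}(L^3\alpha_k^2)$ error terms to $\mathcal{O}(\alpha_k^3)$ does not work. Using $\alpha_k\lesssim 1/L$ lets you trade a factor $L\alpha_k$ for a constant, which \emph{lowers} the power of $\alpha_k$; it cannot raise $\alpha_k^2$ to $\alpha_k^3$. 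Concretely, your line ``$L^3\alpha_k^2 \le L^3\alpha_k\cdot\tfrac{\gamma}{24L}\alpha_k \lesssim L^2\alpha_k^2$'' is just wrong, and since $\alpha_k\to 0$ no bound of the form $L^3\alpha_k^2\le \mathcal{G}_1\alpha_k^3$ can hold uniformly in $k$.

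The paper fixes this in two places. First, rather than bounding $\langle\grad f(\bar x_k),\,e\rangle$ by $L\|e\|$ for the second-order remainder $e=\bar x_{k+1}-\bar x_k+\alpha_k\hat g_k$, it applies Young's inequality with weight $\alpha_k$: $\langle\grad f(\bar x_k),e\rangle\le\tfrac{\alpha_k}{4}\|\grad f(\bar x_k)\|^2+\tfrac{1}{\alpha_k}\|e\|^2$. Since $\|e\|=\mathcal{O}(L^2\alpha_k^2)$, the second piece is $\mathcal{O}(L^4\alpha_k^3)$ directly---no conversion needed. Second, for the quadratic term $\tfrac{L}{2}\|\bar x_{k+1}-\bar x_k\|^2$, whose leading part is $2L\alpha_k^2\|\hat g_k\|^2$, the paper does \emph{not} bound $\|\hat g_k\|\le L$; instead it uses the polarization $-\alpha_k\langle\grad f(\bar x_k),\hat g_k\rangle=-\tfrac{\alpha_k}{2}\|\grad f(\bar x_k)\|^2-\tfrac{\alpha_k}{2}\|\hat g_k\|^2+\tfrac{\alpha_k}{2}\|\grad f(\bar x_k)-\hat g_k\|^2$ on the linear term, which produces an extra $-\tfrac{\alpha_k}{2}\|\hat g_k\|^2$ that absorbs $2L\alpha_k^2\|\hat g_k\|^2$ once $\alpha_k\le 1/(4L)$. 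Your expansion $-\alpha_k\langle\grad f,\hat u_k\rangle=-\alpha_k\|\grad f\|^2+\text{cross}$ misses this second negative term, leaving the quadratic contribution uncancelled. With these two adjustments your outline becomes the paper's proof.
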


\begin{proof}
It follows from the Riemannian quadratic upper bound of $f$ in Lemma \ref{lemma:lipsctz} and $L_g \leq L$ that
\be
\label{eq:dgd:major:inequality}
\begin{aligned}
& f(\bar{x}_{k+1})  \leq f(\bar{x}_k) + \left<\grad f(\bar{x}_k),   \bar{x}_{k+1} - \bar{x}_{k}\right> + \frac{L}{2}\|\bar{x}_{k+1} - \bar{x}_{k} \|^2 \\
 = & f(\bar{x}_k) - \left<\grad f(\bar{x}_k), \alpha_k \hat{g}_k \right> + \left<\grad f(\bar{x}_k),  \bar{x}_{k+1} - \bar{x}_{k} + \alpha_k \hat{g}_k \right> + \frac{L}{2}\|\bar{x}_{k+1} - \bar{x}_{k} \|^2 \\
= & f(\bar{x}_k) - \frac{\alpha_k}{2}\|\grad f(\bar{x}_k) \|^2 - \frac{\alpha_k}{2}\| \hat{g}_k \|^2 + \frac{\alpha_k}{2}  \| \grad f(\bar{x}_k) - \hat{g}_k \|^2 \\
  & +   \left<\grad f(\bar{x}_k),  \bar{x}_{k+1} - \bar{x}_{k} + \alpha_k \hat{g}_k \right> + \frac{L}{2} \|\bar{x}_{k+1} - \bar{x}_{k} \|^2.
\end{aligned}
\ee
According to Young's inequality, we have
\be \label{eq:grad-prod}
\left<\grad f(\bar{x}_k),  \bar{x}_{k+1} - \bar{x}_{k} + \alpha_k \hat{g}_k \right> \leq \frac{\alpha_k}{4}\|\grad f(\bar{x}_k) \|^2 + \frac{1}{\alpha_k}\| \bar{x}_{k+1} - \bar{x}_{k} + \alpha_k \hat{g}_k\|^2.
\ee
Combining \eqref{eq:dgd:major:inequality} and  \eqref{eq:grad-prod} leads to
\be\label{eq:dgd:major:inequality1}
\begin{aligned}
 f(\bar{x}_{k+1})  \leq &  f(\bar{x}_k) - \frac{\alpha_k}{4}\|\grad f(\bar{x}_k) \|^2 - \frac{\alpha_k}{2}\| \hat{g}_k \|^2 + \frac{\alpha_k}{2} \underbrace{ \| \grad f(\bar{x}_k) - \hat{g}_k \|^2}_{a_1} \\
  & +  \frac{1}{\alpha_k} \underbrace{\| \bar{x}_{k+1} - \bar{x}_{k} + \alpha_k \hat{g}_k\|^2}_{a_2} + \frac{L}{2} \underbrace{\|\bar{x}_{k+1} - \bar{x}_{k} \|^2.}_{a_3}
\end{aligned}
\ee
Now, let us bound $a_1$, $a_2$, and $a_3$, respectively. Applying Lemma \ref{lemma:lipsctz} yields
\bee
\begin{aligned}
a_1 & =  \left\|\grad f(\bar{x}_k) - \frac{1}{n}\sum_{i=1}^n \grad f_i(x_{i,k}) \right \|^2  \leq  \frac{1}{n}\sum_{i=1}^n\|\grad f_i(\bar{x}_k) -  \grad f_i(x_{i,k}) \|^2 \\
& \leq    \frac{L^2}{n} \sum_{i=1}^n \|\bar{x}_k -  x_{i,k} \|^2 = \frac{L^2}{n} \| \bar{\bx}_k - \bx_k \|^2.
\end{aligned}
\eee
For $a_2$, it follows from the triangle inequality that
\be\label{eq:tmp2}
\begin{aligned}
    & \| \bar{x}_{k+1} - \bar{x}_{k} + \alpha_k \hat{g}_k\| 
   \leq  \| \bar{x}_k - \hat{x}_k \| + \| \bar{x}_{k+1} - \hat{x}_{k+1} \| + \| \hat{x}_{k+1} - \hat{x}_{k} + \alpha_k \hat{g}_k\| \\
   \overset{\eqref{eq:distance-an-rm}}{\leq} &  \frac{M_2}{n}(\| \bar{\bx}_k - \bx_k \|^2 + \| \bar{\bx}_{k+1} - \bx_{k+1} \|^2) + \| \hat{x}_{k+1} - \hat{x}_{k} + \alpha_k \hat{g}_k\|.
\end{aligned}
\ee
Moreover, we have
\be\label{eq:tmp1}
\begin{aligned}
  &  \| \hat{x}_{k+1} - \hat{x}_{k} + \alpha_k \hat{g}_k\| 
  =  \| \frac{1}{n}\sum_{i=1}^n (x_{i,k+1} - x_{i,k} + \alpha_k \grad f_i(x_{i,k}) ) \|  \\
  \leq  &\frac{1}{n} \|  \sum_{i=1}^n (x_{i,k+1} - x_{i,k} + \alpha_k \grad f_i(x_{i,k}) + \grad \phi_i^t( \bx_k )) \| + \frac{1}{n}\|  \sum_{i=1}^n \grad \phi_i^t( \bx_k ) \| \\
  \overset{\eqref{projec-second-order1} }{\leq} & \frac{Q}{n} \sum_{i=1}^n  \| \alpha_k \grad f_i(x_{i,k}) + \nabla \phi_i^t( \bx_k )  \|^2 + \frac{1}{n}\|  \sum_{i=1}^n \grad \phi_i^t( \bx_k ) \| \\
  \overset{\eqref{eq:sum-consen-grad}}{\leq} & \frac{2Q\alpha_k^2}{n} \|{\bf G}_k \|^2 + \frac{2Q}{n} \|\nabla \phi^t( \bx_k ) \|^2  +\frac{L_2}{\sqrt{n}} \| \bx_k - \bar{\bx}_k \|^2 \\
  \leq & \frac{2 Q\alpha_k^2}{n} \|{\bf G}_k \|^2 + \frac{(\sqrt{n}L_2 + 8Q)}{n}\| \bx_k - \bar{\bx}_k \|^2.
\end{aligned}
\ee
Plugging \eqref{eq:tmp1} into \eqref{eq:tmp2} gives
\bee
\begin{aligned}
    a_2  \leq & \frac{3M_2^2+6(\sqrt{n}L_2+8Q)^2}{n^2}\| \bx_k - \bar{\bx}_k \|^4  \\
    &+ \frac{3M_2^2}{n^2}\| \bar{\bx}_{k+1} - \bx_{k+1} \|^4 + \frac{24Q^2 \alpha_k^4}{n^2} \|{\bf G}_k \|^4.
\end{aligned}
\eee
By \eqref{eq:diff-avg-x}, we obtain
\begin{equation}\nonumber
    \begin{aligned}
    a_3  \leq & \frac{4(8Q + \sqrt{n}L_2 + M_2)^2}{n^2} \|\bx_k - \bar{\bx}_k\|^4 +\frac{16Q^2\alpha_k^4}{n^2} \| {\bf G}_k \|^4  + 4\alpha_k^2 \| \hat{g}_{k}  \|^2   \\
    & + \frac{4M_2^2}{n^2} \| \bx_{k+1}  - \bar{\bx}_{k+1} \|^4.
    \end{aligned}
\end{equation}
 Combining $a_1, a_2, a_3$ with \eqref{eq:dgd:major:inequality1} and noting $\|{\bf G}_k\|^2 \leq n L^2$ imply that
\bee
\begin{aligned}
 f(\bar{x}_{k+1})  \leq &  f(\bar{x}_k) - \frac{\alpha_k}{4}\|\grad f(\bar{x}_k) \|^2 - \frac{\alpha_k}{2}\| \hat{g}_k \|^2 + \frac{\alpha_k}{2} a_1  +  \frac{1}{\alpha_k} a_2 + \frac{L}{2} a_3\\
 \leq & f(\bar{x}_k) - \frac{\alpha_k}{4}\|\grad f(\bar{x}_k) \|^2 - ( \frac{\alpha_k}{2} - 2L\alpha_k^2  )\|  \hat{g}_k\|^2 + \frac{L^2\alpha_k}{2n} \| \bar{\bx}_k - \bx_k \|^2  \\
 & +(\frac{3M_2^2+6(\sqrt{n}L_2+8Q)^2}{n^2\alpha_k}+\frac{2L(8Q + \sqrt{n}L_2 + M_2)^2}{n^2} ) \| \bx_k - \bar{\bx}_k \|^4 \\
 & +( \frac{3M_2^2}{n^2\alpha_k} +  \frac{2LM_2^2}{n^2} )\| \bar{\bx}_{k+1} - \bx_{k+1} \|^4 + (24Q^2\alpha_k^3 + 8Q^2L  \alpha_k^4) L^4
\end{aligned}
\eee
Since $\alpha_{k+1} \leq \alpha_k \leq 1/(4L)$ and $\frac{1}{n}\|\bar{\bx}_k - \bx_k \|^2 \leq CL^2 \alpha_k^2 $ by Lemma \ref{lem:consensus}, it holds that
\bee
\begin{aligned}
    f(\bar{x}_{k+1})  \leq & f(\bar{x}_k) - \frac{\alpha_k}{4} \|\grad f(\bar{x}_k) \|^2 \\
    & + (C  + 6C^2(M_2^2+(\sqrt{n}L_2+8Q)^2)     + 24Q^2)L^4\alpha_{k}^3 \\
    & + ( 2C^2(8Q + \sqrt{n}L_2 + M_2)^2 + 2C^2M_2^2 + 8Q^2  )L^5\alpha_{k}^4.
\end{aligned}
\eee
The proof is completed.
\end{proof}

With these preparations, we give the proof of Theorem \ref{thm:dprgd}.

\begin{proof}[Proof of Theorem \ref{thm:dprgd}] Firstly, \eqref{main-consensus} follows from Lemma \ref{lem:consensus}. Now we prove \eqref{main-grad}. By Lemma \ref{them:dpg}, we have
    \bee
\begin{aligned}
       f(\bar{x}_{k+1})  \leq & f(\bar{x}_k) - \frac{\alpha_k}{4}  \|\grad f(\bar{x}_k) \|^2  +
 \mathcal{G}_1 \alpha_k^3 +\mathcal{G}_2 \alpha_k^4.
\end{aligned}
\eee
Summing the above inequality over $k=0,1,\ldots, K$ and telescoping the right-hand side with any $K>0$ give
\bee
\sum_{k=0}^K \frac{\alpha_k}{4}  \|\grad f(\bar{x}_k) \|^2 \leq f(\bar{x}_{0})  - f(\bar{x}_*) +  \mathcal{G}_1 \sum_{k=0}^K \alpha_k^3 + \mathcal{G}_2\sum_{k=0}^K \alpha_k^4.
\eee
Dividing both sides by $\sum_{k=0}^K \frac{\alpha_k}{4}$ yields
\bee
\min_{k=0,\cdots,K}\|\grad f(\bar{x}_k) \|^2 \leq \frac{f(\bar{x}_{0})  - f(\bar{x}_*) +  \mathcal{G}_1 \sum_{k=0}^K \alpha_k^3 + \mathcal{G}_2\sum_{k=0}^K \alpha_k^4}{\sum_{k=0}^K \frac{\alpha_k}{4}}.
\eee
Since $\alpha_k = \mathcal{O}(\frac{1}{\sqrt{k}})$, it holds that $\frac{\sum_{k=0}^K \alpha_k^3 }{\sum_{k=0}^K \alpha_k } = \mathcal{O}( \frac{1}{\sqrt{K+1}})$ dominates the right hand of the above inequality.  The proof is completed.
\end{proof}

\subsubsection{Proof of Theorem \ref{thm:dprgt}}\label{sec:proof:dprgt}
For ease of notation, let us denote 
$$
\begin{aligned}
\hat{g}_k &:=\frac{1}{n} \sum_{i=1}^n \grad f_i\left(x_{i, k}\right), \quad \hat{\mathbf{G}}_k:=\left(\mathbf{1}_n \otimes I_d\right) \hat{g}_k, \quad  \\
\bv_k & :=[v_{1,k}^\top, \cdots, v_{n,k}^\top]^\top,\quad \bs_{k}:=[s_{1,k}^\top, \cdots, s_{n,k}^\top]^\top. 
\end{aligned}
$$
Firstly, we have the following lemma on the relationship between ${\bf s}_k$ and $\hat{\bf G}_k$.
\begin{lemma} \label{lem:diff-s-g}
Let $\{\bx_k\}_k$ be the sequence generated by Algorithm \ref{alg:drgta}. It holds that for any $k$,
\be \label{eq:diff-g-g} \|{\bf G}_{k+1} - {\bf G}_k\| \leq 4L\|\bx_k - \bar{\bx}_k\| + 2L\alpha \| \bv_k\|,
\ee
\be \label{eq:diff-s-g} \|{\bf s}_{k+1} - \hat{\bf G}_{k+1} \| \leq \sigma_2^t \| {\bf s}_k - \hat{\bf G}_k \| + \| {\bf G}_{k+1} - {\bf G}_k \|. \ee

\end{lemma}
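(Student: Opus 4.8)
The plan is to prove the two inequalities separately, reducing each to elementary operator‑norm estimates for $W^t$ combined with the Lipschitz properties of $\grad f_i$ and $\Pcal_{\Mcal}$ already established.

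For \eqref{eq:diff-g-g}, I would first note that every iterate $x_{i,k}$ lies in $\Mcal$ (since $\bx_0\in\mathcal{N}\subset\Mcal^n$ and each update is a projection onto $\Mcal$), so the Riemannian gradient Lipschitz bound \eqref{g-riemannian-lip}, applied blockwise and using $L_f+L_GL_{\Pcal}\le L$, gives $\|{\bf G}_{k+1}-{\bf G}_k\|^2=\sum_{i}\|\grad f_i(x_{i,k+1})-\grad f_i(x_{i,k})\|^2\le L^2\|\bx_{k+1}-\bx_k\|^2$. It then remains to bound $\|\bx_{k+1}-\bx_k\|$. Applying the elementary estimate $\|\Pcal_{\Mcal}(z)-y\|\le 2\|z-y\|$ valid for any $z\in\R^{d\times r}$ and $y\in\Mcal$ (the same one invoked in the proof of Lemma \ref{lem:stay-neighborhood}) with $z=\sum_jW_{ij}^tx_{j,k}-\alpha v_{i,k}$ and $y=x_{i,k}$, and stacking over $i$, gives $\|\bx_{k+1}-\bx_k\|\le 2\|(\bW^t-I_{nd})\bx_k-\alpha\bv_k\|\le 2\|(\bW^t-I_{nd})(\bx_k-\bar{\bx}_k)\|+2\alpha\|\bv_k\|$, where I used $(\bW^t-I_{nd})\bar{\bx}_k=0$. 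Since the eigenvalues of $W$ lie in $(-1,1]$ those of $W^t-I_n$ lie in $(-2,0]$, hence $\|\bW^t-I_{nd}\|_{\rm op}\le 2$ and $\|\bx_{k+1}-\bx_k\|\le 4\|\bx_k-\bar{\bx}_k\|+2\alpha\|\bv_k\|$; multiplying by $L$ yields \eqref{eq:diff-g-g}.

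For \eqref{eq:diff-s-g}, the key preliminary fact is the mean‑preservation of gradient tracking: averaging $s_{i,k+1}=\sum_jW_{ij}^ts_{j,k}+\grad f_i(x_{i,k+1})-\grad f_i(x_{i,k})$ over $i$ and using $\sum_iW_{ij}^t=1$ (double stochasticity, Assumption \ref{assum-w}) gives $\hat s_{k+1}=\hat s_k+\hat g_{k+1}-\hat g_k$; since $s_{i,0}=\grad f_i(x_{i,0})$ forces $\hat s_0=\hat g_0$, induction yields $\hat s_k=\hat g_k$ for all $k$, i.e. $(J\otimes I_d)\bs_k=\hat{\bf G}_k$. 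Writing the recursion as $\bs_{k+1}=\bW^t\bs_k+{\bf G}_{k+1}-{\bf G}_k$ and subtracting $\hat{\bf G}_{k+1}=(J\otimes I_d)\bs_{k+1}$, I apply $I_{nd}-J\otimes I_d$ and use $(J\otimes I_d)\bW^t=(JW^t)\otimes I_d=J\otimes I_d$ (because $JW^t=J$) together with $((W^t-J)J)\otimes I_d=0$ to rewrite the first term as $((W^t-J)\otimes I_d)(\bs_k-\hat{\bf G}_k)$. Then $\|(W^t-J)\otimes I_d\|_{\rm op}=\|W^t-J\|_{\rm op}=\sigma_2^t$ (symmetry of $W$ and its spectral decomposition $W^t-J=\sum_{i\ge2}\lambda_i^tu_iu_i^\top$) and $\|I_{nd}-J\otimes I_d\|_{\rm op}=1$ give \eqref{eq:diff-s-g}.

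Both parts are essentially bookkeeping; the only points requiring care are the correct identification of the operator norms of $W^t-I_n$, $W^t-J$ and $I_n-J$ through the spectrum of $W$, and the mean‑preservation identity $\hat s_k=\hat g_k$, which is exactly where the doubly stochastic structure and the initialization $s_{i,0}=\grad f_i(x_{i,0})$ are used. I do not anticipate any genuine analytic obstacle.
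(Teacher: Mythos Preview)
Your proposal is correct and follows essentially the same route as the paper: for \eqref{eq:diff-g-g} the paper also reduces to $\|{\bf G}_{k+1}-{\bf G}_k\|\le L\|\bx_{k+1}-\bx_k\|$ and bounds $\|\bx_{k+1}-\bx_k\|$ via the closest-point inequality for the projection (writing $\bx_{k+1}-\bx_k+(I_{nd}-\bW^t)\bx_k+\alpha\bv_k=\Pcal_{\Mcal^n}(z)-z$ with $z=\bW^t\bx_k-\alpha\bv_k$), and for \eqref{eq:diff-s-g} it applies $(I_n-J)\otimes I_d$ to the tracking recursion and uses $\|W^t-J\|_{\rm op}=\sigma_2^t$. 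Your explicit verification of the mean-preservation identity $\hat s_k=\hat g_k$ (which the paper uses implicitly when writing ${\bf s}_{k+1}-\hat{\bf G}_{k+1}=((I_n-J)\otimes I_d){\bf s}_{k+1}$) and of $(W^t-J)J=0$ are exactly the right justifications for the steps the paper leaves tacit.
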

\begin{proof}
The Lipschitz continuity of ${\bf G}$ yields
\begin{equation}\nonumber
\begin{aligned}
      \|{\bf G}_{k+1} - {\bf G}_k\| &  \leq L \|\bx_{k+1} - \bx_k\|.
\end{aligned}
\end{equation}
On the other hand, it holds that
\begin{equation}\nonumber
    \begin{aligned}
        \|  \bx_{k+1} - \bx_k \| & \leq \|  \bx_{k+1} - \bx_k + (I_{ nd}-\bW^t)\bx_k + \alpha \bv_k \| + \| (I_{ nd}-\bW^t)\bx_k + \alpha \bv_k\| \\
        & \leq 2\| (I_{ nd}-\bW^t)\bx_k + \alpha \bv_k \| \leq 4\|\bx_k - \bar{\bx}_k\| + 2\alpha \| \bv_k\|,
    \end{aligned}
\end{equation}
where the first inequality is from the triangle inequality and the second inequality is due to the definition of $\bx_{k+1}$. 
It follows from the definition of $ \hat{\bf G}_{k+1}$ that
\[ \begin{aligned}
{\bf s}_{k+1} - \hat{\bf G}_{k+1} & = ((I_n - J) \otimes I_d) {\bf s}_{k+1} \\
& = ((I_n -J) \otimes I_d) ( (W \otimes I_d) {\bf s}_k + {\bf G}_{k+1} - {\bf G}_{k})  \\
& = ((W^t - J) \otimes I_d) {\bf s}_k + ((I_n - J) \otimes I_d)({\bf G}_{k+1} - {\bf G}_k).\\
\end{aligned} \]
Using the spectral property of $W$, we conclude that \eqref{eq:diff-s-g} holds.
\end{proof}

Before showing the boundedness of consensus error, we investigate the uniform boundedness of $\|\bs_k\|$ in the following lemma.
\begin{lemma}\label{lemma:neibohood}
Let $\{\bx_k\}_k$ be the sequence generated by Algorithm \ref{alg:drgta}. Suppose that Assumptions \ref{assum-w} and \ref{assum-f} hold. If $\bx_0 \in \mathcal{N}$, $t\geq \max\left\{\log_{\sigma_2}(\frac{1}{4\sqrt{n}}), \log_{\sigma_2}\left(\frac{\gamma}{24\sqrt{n}\zeta}\right)\right\}$, and $\alpha< \frac{\gamma}{96 L}$, it follows that for all $k$, $\bx_k \in \mathcal{N}$  and
\be\label{eq:sk-bound}
\|s_{i,k}\| \leq 4 L,~ \forall i\in [n].
\ee
\end{lemma}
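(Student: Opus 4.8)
\textbf{Proof plan for Lemma \ref{lemma:neibohood}.}
The plan is to argue by induction on $k$, proving simultaneously that $\bx_k \in \mathcal{N}$ and that $\|s_{i,k}\| \leq 4L$ for all $i\in[n]$. The base case $k=0$ holds because $\bx_0\in\mathcal N$ by hypothesis and $s_{i,0}=\grad f_i(x_{i,0})$, so $\|s_{i,0}\| = \|\grad f_i(x_{i,0})\| \leq L_G \leq L \leq 4L$. For the inductive step, suppose the claim holds at step $k$. First I would use the bound $\|s_{i,k}\|\leq 4L$ to control $\|v_{i,k}\| = \|\Pcal_{T_{x_{i,k}}\Mcal}(s_{i,k})\| \leq \|s_{i,k}\| \leq 4L$, so that the iterate update in Algorithm \ref{alg:drgta} is of the form covered by Lemma \ref{lem:stay-neighborhood} with $u_{i,k} = v_{i,k}$ and bound $B = 4L$. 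Checking the hypotheses of Lemma \ref{lem:stay-neighborhood}: the condition $\alpha \leq \gamma/(24B) = \gamma/(96L)$ is exactly our assumption $\alpha < \gamma/(96L)$, and the condition $t \geq \lceil \log_{\sigma_2}(\gamma/(24\sqrt n \zeta))\rceil$ is part of the hypothesis; hence Lemma \ref{lem:stay-neighborhood} gives $\bx_{k+1}\in\mathcal N$, completing the first half of the inductive step.

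It remains to show $\|s_{i,k+1}\|\leq 4L$. Here I would not try to bound each $s_{i,k+1}$ individually but instead split $s_{i,k+1}$ into its average part and its deviation part: write $\|s_{i,k+1}\| \leq \|\hat G_{k+1}\| + \|s_{i,k+1} - (\1_n\otimes I_d)\hat G_{k+1}\|_{\text{block }i}$, i.e. control $\|\hat g_{k+1}\|$ and the tracking error $\|\bs_{k+1} - \hat{\bf G}_{k+1}\|$ separately. The average term is easy: $\|\hat g_{k+1}\| = \|\frac1n\sum_i \grad f_i(x_{i,k+1})\| \leq \frac1n\sum_i \|\grad f_i(x_{i,k+1})\| \leq L_G \leq L$. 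For the tracking error, I would iterate the recursion \eqref{eq:diff-s-g} from Lemma \ref{lem:diff-s-g}, combined with \eqref{eq:diff-g-g} and the consensus bound to be established alongside, obtaining a geometric-sum bound of the form $\|\bs_{k+1} - \hat{\bf G}_{k+1}\| \leq \sigma_2^t \|\bs_k - \hat{\bf G}_k\| + 4L\|\bx_k - \bar\bx_k\| + 2L\alpha\|\bv_k\|$, and then use that $\sigma_2^t \leq 1/(4\sqrt n)$ (the other part of the hypothesis on $t$), $\|\bv_k\|\leq 4\sqrt n L$, and $\alpha$ small to show the tracking error stays below, say, $3\sqrt n L$ in norm, so that each block satisfies $\|s_{i,k+1} - \hat G_{k+1}\|\leq 3L$ and hence $\|s_{i,k+1}\|\leq L + 3L = 4L$.

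The delicate point is that the tracking-error recursion involves the consensus error $\|\bx_k - \bar\bx_k\|$, which itself depends on $\|\bv_k\|$ and hence on $\|\bs_k\|$ — so the induction must carry a joint invariant on both $\|\bs_k\|$ and $\|\bx_k - \bar\bx_k\|$, and the constants must be chosen consistently so that the geometric contraction from $\sigma_2^t$ (made small via $t\geq\log_{\sigma_2}(1/(4\sqrt n))$) dominates the error injected at each step. Concretely, I would set up the invariant ``$\|\bs_k - \hat{\bf G}_k\| \leq 3\sqrt n L$ and $\|\bx_k - \bar\bx_k\| \leq c\sqrt n L\alpha$'' for a suitable absolute constant $c$, verify the base case (where $\bx_0\in\mathcal N$ gives $\|\bx_0 - \bar\bx_0\|\leq \frac12\sqrt n\gamma$, which may force an extra contraction step or a mild enlargement of the invariant for small $k$, handled exactly as in Lemma \ref{lem:consensus}), and then close the recursion. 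The main obstacle is bookkeeping: verifying that with $\alpha < \gamma/(96L)$ (and, if needed, the additional smallness of $\alpha$ built into the hypotheses of Theorem \ref{thm:dprgt}) all the injected error terms are absorbed, so that the invariant propagates and in particular $\|s_{i,k+1}\| \leq 4L$ follows. No new inequalities beyond Lemmas \ref{lem:stay-neighborhood}, \ref{lem:diff-s-g} and the spectral bound $\sigma_2^t \leq 1/(4\sqrt n)$ should be needed.
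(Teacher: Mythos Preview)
Your overall skeleton --- induction on $k$, base case via $\|s_{i,0}\|=\|\grad f_i(x_{i,0})\|\leq L$, and invoking Lemma~\ref{lem:stay-neighborhood} with $B=4L$ to get $\bx_{k+1}\in\mathcal N$ --- matches the paper exactly. The divergence is in how you propose to propagate $\|s_{i,k+1}\|\leq 4L$.

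The paper avoids the coupling to the consensus error entirely. Instead of invoking \eqref{eq:diff-g-g} (the Lipschitz bound on $\|{\bf G}_{k+1}-{\bf G}_k\|$), it uses the cruder estimate $\|\grad f_i(x_{i,k+1})-\grad f_i(x_{i,k})\|\leq 2L$ coming simply from the uniform gradient bound $\|\grad f_i\|\leq L$. It also compares $s_{i,k+1}$ to $\hat g_k$ rather than $\hat g_{k+1}$: since $\frac1n\sum_j s_{j,k}=\hat g_k$ by the tracking identity, one has
\[
\Bigl\|\sum_j W^t_{ij}s_{j,k}-\hat g_k\Bigr\|=\Bigl\|\sum_j (W^t_{ij}-\tfrac1n)s_{j,k}\Bigr\|\leq \sqrt{n}\,\sigma_2^t\max_j\|s_{j,k}\|\leq \tfrac14\cdot 4L=L,
\]
using the total-variation bound and $t\geq\log_{\sigma_2}(1/(4\sqrt n))$. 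Adding the $2L$ from the gradient difference gives $\|s_{i,k+1}-\hat g_k\|\leq 3L$, hence $\|s_{i,k+1}\|\leq 3L+\|\hat g_k\|\leq 4L$. No joint invariant, no consensus error, no special treatment of small $k$.

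Your route via \eqref{eq:diff-g-g} introduces $\|\bx_k-\bar\bx_k\|$ into the recursion, and at $k=0$ this is only controlled by $\sqrt n\,\gamma/2$. Since $\gamma$ is a manifold constant with no a~priori relation to $L$, the induction does not close with the specific bound $4L$; you would end up with a $\gamma$-dependent constant. The appeal to ``handled exactly as in Lemma~\ref{lem:consensus}'' does not rescue this: that lemma relies on diminishing step sizes to absorb the large initial consensus error, whereas here $\alpha$ is fixed. So the plan as written has a gap in the constants, and the fix is precisely the paper's simplification --- replace the Lipschitz bound \eqref{eq:diff-g-g} by the trivial boundedness estimate $2L$.
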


\begin{proof}
We prove it by induction on both $\|s_{i,k}\|$ and $ \max_i\| x_{i,k} - \bar{x}_{k} \|$. Since $\|s_{i,0}\| = \| \grad f_i(x_{i,0}) \| \leq L $ for all $i\in [n]$ and $\max_i\|x_{i,0} - \bar{x}_0\| \leq \frac{1}{2}\gamma$.  Suppose for some $k\geq 0$ that $\|s_{i,k}\| \leq 4 L$ and $\max_i\|x_{i,k} - \bar{x}_k\| \leq \frac{1}{2}\gamma$.  Since $\|v_{i,k}\| \leq \|s_{i,k}\| \leq 4 L$ and $\alpha< \gamma/(96L)$, it follows from Lemma \ref{lem:stay-neighborhood} that
\bee
    \begin{aligned}
         \sum_{j=1}^n {W_{ij}^t} x_{j,k} -\alpha  v_{i,k} \in \bar{U}_{\Mcal} (\gamma), ~ i = 1,\cdots,n, \quad \max_i\| x_{i,k+1} - \bar{x}_{k+1} \| \leq \frac{1}{2} \gamma.
    \end{aligned}
\eee 
Then, we have
\bee
\begin{aligned}
\| s_{i,k+1} - \hat{g}_k\| & = \| \sum_{j=1}^n W^t_{ij}s_{j,k} - \hat{g}_k + \grad f_i(x_{i,k+1}) - \grad f_i(x_{i,k}) \| \\
& =  \| \sum_{j=1}^n (W^t_{ij} - \frac{1}{n})  s_{j,k} \|  + \|\grad f_i(x_{i,k+1}) - \grad f_i(x_{i,k}) \| \\
& \leq \sigma_2^t \sqrt{n} \max_i \| s_{i,k}\| + 2L  \\
& \leq  \sigma_2^t \sqrt{n} \max_i\| s_{i,k}\| + 2 L \\
& \leq \frac{1}{4} \max_i\| s_{i,k}\| + 2 L \leq L + 2 L \leq 3L. 
\end{aligned}
\eee
Hence, $
\| s_{i,k+1}   \| \leq\| s_{i,k+1} - \hat{g}_k\| + \| \hat{g}_k\| \leq 3L + L \leq 4\zeta L$, where we use $\| \hat{g}_k\| \leq \frac{1}{n} \sum_{i=1}^n\| \grad f_i(x_{i,k}) \|\leq  L$. The proof is completed.
\end{proof}

With the above lemma, we have the following result on the consensus error.
\begin{lemma}\label{lemma:dpgta:consensus}
Let $\{\bx_k\}$ be the sequence generated by Algorithm \ref{alg:drgta} and $\rho_t = 2\sigma_2^t$. Under the same conditions in Lemma \ref{lemma:neibohood}, the following holds
\begin{equation}\label{eq:dgta:consensus}
\begin{aligned}
 \|\bx_{k+1} - \bar{\bx}_{k+1}\| & \leq 2\sigma_2^t \|\bx_{k} - \bar{\bx}_{k}\| + 2\alpha\|\bv_k\|.
\end{aligned}
\end{equation}
 Moreover, there exists a constant $D$ such that
$
\frac{1}{n}\| \bx_k - \bar{\bx}_k \|^2 \leq  D L^2 \alpha^2.
$
\end{lemma}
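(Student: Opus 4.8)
The plan is to obtain \eqref{eq:dgta:consensus} by replaying, almost verbatim, the proof of Lemma~\ref{lemma:dpg:consensus} with $u_{i,k}$ replaced by $v_{i,k}=\Pcal_{T_{x_{i,k}}\Mcal}(s_{i,k})$. First I would invoke Lemma~\ref{lemma:neibohood}: under the stated hypotheses it gives $\bx_k\in\mathcal{N}$ for every $k$, $\|v_{i,k}\|\le\|s_{i,k}\|\le 4L$, and, via Lemma~\ref{lem:stay-neighborhood} applied with $u_{i,k}=v_{i,k}$ and $B=4L$, the inclusion $\sum_{j}W_{ij}^t x_{j,k}-\alpha v_{i,k}\in\bar{U}_{\Mcal}(\gamma)$. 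Then, using the optimality of $\bar{x}_{k+1}$ as a minimizer of $\sum_i\|y-x_{i,k+1}\|^2$ over $y\in\Mcal$ (so $\|\bx_{k+1}-\bar{\bx}_{k+1}\|\le\|\bx_{k+1}-\bar{\bx}_k\|$), the $2$-Lipschitz estimate \eqref{proximally-1} of $\Pcal_{\Mcal}$ on $\bar{U}_{\Mcal}(\gamma)$, the identity $\bW^t\bx_k-\hat{\bx}_k=((W^t-J)\otimes I_d)(\bx_k-\hat{\bx}_k)$, the inequality $\|\bx_k-\hat{\bx}_k\|\le\|\bx_k-\bar{\bx}_k\|$, and the spectral bound $\|((W^t-J)\otimes I_d)(\bx_k-\hat{\bx}_k)\|\le\sigma_2^t\|\bx_k-\hat{\bx}_k\|$ exactly as in \eqref{eq:wx-hatx}, one gets
\[
\|\bx_{k+1}-\bar{\bx}_{k+1}\|\le\big\|\Pcal_{\Mcal^n}(\bW^t\bx_k-\alpha\bv_k)-\Pcal_{\Mcal^n}(\hat{\bx}_k)\big\|\le 2\sigma_2^t\|\bx_k-\bar{\bx}_k\|+2\alpha\|\bv_k\|,
\]
which is \eqref{eq:dgta:consensus}.

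For the uniform bound I would bound the forcing term crudely by $\|\bv_k\|\le 4\sqrt{n}\,L$ (from $\|v_{i,k}\|\le 4L$), turning \eqref{eq:dgta:consensus} into the scalar recursion $\|\bx_{k+1}-\bar{\bx}_{k+1}\|\le\rho_t\|\bx_k-\bar{\bx}_k\|+8\sqrt{n}\,L\alpha$ with $\rho_t=2\sigma_2^t<1$ (ensured by $t\ge\log_{\sigma_2}(1/(4\sqrt{n}))\ge\log_{\sigma_2}(1/2)$). Unrolling it and summing the geometric series gives $\|\bx_k-\bar{\bx}_k\|\le\rho_t^k\|\bx_0-\bar{\bx}_0\|+\tfrac{8\sqrt{n}L\alpha}{1-\rho_t}$, and $\|\bx_0-\bar{\bx}_0\|\le\tfrac{\sqrt n}{2}\gamma$ since $\bx_0\in\mathcal{N}$. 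Normalizing by $\sqrt{n}\alpha$ and arguing as in Lemma~\ref{lem:consensus} — i.e., using that in the admissible range $\alpha=\Theta(1/L)$, so that $\|\bx_0-\bar{\bx}_0\|/(\sqrt n\,\alpha)=\mathcal{O}(L)$, and splitting the index range at a fixed $K$ beyond which $\rho_t^k$ is negligible — yields $\frac1n\|\bx_k-\bar{\bx}_k\|^2\le D L^2\alpha^2$ with $D=\mathcal{O}\big((1-\rho_t)^{-2}\big)$ independent of $L$ and $n$.

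I expect the only real difficulty to be organizational rather than technical: essentially all the new work is already packaged into Lemma~\ref{lemma:neibohood}, whose induction couples $\|s_{i,k}\|$ with $\max_i\|x_{i,k}-\bar{x}_k\|$ and uses the step-size restriction $\alpha<\gamma/(96L)$ to keep the hypotheses of Lemma~\ref{lem:stay-neighborhood} in force with $B=4L$; given that, both claims reduce to the mechanical manipulations above. The one point to watch is the transient term $\rho_t^k\|\bx_0-\bar{\bx}_0\|$ in the unrolling, which — exactly as in the proof of Lemma~\ref{lem:consensus} — must be absorbed into $D$ using that $\alpha$ is of order $1/L$ rather than arbitrarily small (equivalently, if one prefers, by taking a consensual initialization $\bx_0=\1_n\otimes x_0$, in which case the transient vanishes outright). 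A sharper, $\alpha$-independent control $\|\bv_k\|\le\|\bs_k\|\le\|\bs_k-\hat{\mathbf{G}}_k\|+\sqrt n\,\|\hat{g}_k\|$ coupled with the gradient-tracking recursion \eqref{eq:diff-s-g} is not needed here, but is what will later drive the improved $\mathcal{O}(\epsilon^{-1})$ rate of Theorem~\ref{thm:dprgt}.
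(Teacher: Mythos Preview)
Your proposal is correct and follows essentially the same route as the paper: it too obtains \eqref{eq:dgta:consensus} by replaying the argument of Theorem~\ref{theo:linear-con-consensus}/Lemma~\ref{lemma:dpg:consensus} with $u_{i,k}=v_{i,k}$ (after Lemma~\ref{lemma:neibohood} supplies $\|v_{i,k}\|\le 4L$ and membership in $\bar{U}_{\Mcal}(\gamma)$), and then derives the uniform bound by the same geometric-series reasoning as in Lemma~\ref{lem:consensus}, yielding $D=\mathcal{O}\big((1-\rho_t)^{-2}\big)$. Your explicit flag about the transient term $\rho_t^k\|\bx_0-\bar{\bx}_0\|$ requiring $\alpha=\Theta(1/L)$ (or consensual initialization) is apt and matches how the paper implicitly handles it.
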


\begin{proof}
    Similar to the proof in Theorem \ref{theo:linear-con-consensus}, we have
    \[ \begin{aligned}
    \|\bx_{k+1} - \bar{\bx}_{k+1}\| & \leq \| \bx_{k+1} - \bar{\bx}_k \|  = \| \Pcal_{\Mcal^n}(\bW^t \bx_k - \alpha  \bv_k) - \Pcal_{\Mcal^n}( \hat{\bx}_k) \| \\
    & { \leq }2 \|\bW^t \bx_k - \alpha  \bv_k -\hat{\bx}_k  \| 
    \leq 2\sigma_2^t \|\bx_k -\bar{\bx}_k\| + 2\alpha\|\bv_k\|.
\end{aligned}
\]
Since $\|\bv_k\| \leq \|\bs_k\| \leq 4\sqrt{n}L$ by \eqref{eq:sk-bound}, using the same 
argument of Lemma \ref{lemma:dpg:consensus}, there exists $D = \mathcal{O}(\frac{1}{(1-\rho_t)^2})$, which is independent of $L$ and $n$, such that
$
\frac{1}{n} \| \bx_{k} - \bar{\bx}_k \|^2 \leq  D L^2 \alpha^2,~\forall k\geq 0.
$
\end{proof}

By utilizing the Lipschitz-type inequalities on compact submanifolds in Section \ref{sec:ineq-man} and combining the above lemma, we can show a sufficient decrease on $f$. 

\begin{lemma}\label{lemma:dgta:majorized}
Let $\{\bx_k\}_k$ be the sequence generated by Algorithm \ref{alg:drgta}. Suppose that Assumptions \ref{assum-w} and \ref{assum-f} hold. If $\bx_0 \in \mathcal{N}$, $t\geq \max\left\{\log_{\sigma_2}(\frac{1}{4\sqrt{n}}), \log_{\sigma_2}\left(\frac{\gamma}{24\sqrt{n}\zeta}\right)\right\}$, and $\alpha< \frac{\gamma}{192 L}$, it follows that
\begin{equation}\nonumber
    \begin{aligned}
&      f(\bar{x}_{k+1}) \\
\leq &  f(\bar{x}_k) - (\alpha  - \frac{L^2}{2} \alpha^2) \|\hat{g}_k\|^2 + \mathcal{C}_1  \alpha^2 \frac{1}{n}\|\bs_k \|^2 +\mathcal{C}_2 \frac{1}{n} \| \bx_k - \bar{\bx}_k\|^2 +\mathcal{C}_3 \frac{1}{n} \| \bx_{k+1} - \bar{\bx}_{k+1}\|^2,
\end{aligned}
\end{equation}
where
\begin{equation}\nonumber
    \begin{aligned}
          \mathcal{C}_1 & = (2Q+ 3)L + 192 Q^2\alpha^2L^3 + 3L, \\
         \mathcal{C}_2 & = 4LQ+4L + M_2^2DL +  4DL^3(8Q + \sqrt{n}L_2 + M_2)^2\alpha^2, \\
         \mathcal{C}_3 & = M_2^2D L + 4DL^3M_2^2\alpha^2.
    \end{aligned}
\end{equation}
\end{lemma}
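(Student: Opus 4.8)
The plan is to mimic the proof of Lemma~\ref{them:dpg} for DPRGD, the one genuinely new feature being that the step direction used by Algorithm~\ref{alg:drgta} is $v_{i,k}=\Pcal_{T_{x_{i,k}}\Mcal}(s_{i,k})$ rather than $\grad f_i(x_{i,k})$, so the analysis must be routed through the tracking variable. I would first record the gradient-tracking invariant: since $W$ is doubly stochastic and $s_{i,0}=\grad f_i(x_{i,0})$, summing the $s$-update telescopes to $\frac1n\sum_i s_{i,k}=\frac1n\sum_i\grad f_i(x_{i,k})=\hat g_k$ for every $k$. Writing $\hat v_k:=\frac1n\sum_{i=1}^n v_{i,k}$, this gives $\hat v_k-\hat g_k=-\frac1n\sum_i\Pcal_{N_{x_{i,k}}\Mcal}(s_{i,k})$, and since $\grad f_i(x_{i,k})\in T_{x_{i,k}}\Mcal$ we may rewrite $\Pcal_{N_{x_{i,k}}\Mcal}(s_{i,k})=\Pcal_{N_{x_{i,k}}\Mcal}(s_{i,k}-\hat g_k)+\Pcal_{N_{x_{i,k}}\Mcal}(\hat g_k)$, the second summand being $O(L\|\bx_k-\bar\bx_k\|)$ by Lipschitzness of $x\mapsto\Pcal_{N_x\Mcal}$ and the first bounded by $\|s_{i,k}\|$ at worst. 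This is the mechanism by which every appearance of $\hat v_k$ will be traded for $\hat g_k$ plus terms the bound tolerates. Throughout, Lemma~\ref{lemma:neibohood} guarantees $\bx_k\in\mathcal N$ and $\sum_jW_{ij}^tx_{j,k}-\alpha v_{i,k}\in\bar U_\Mcal(\gamma)$, so Lemma~\ref{lemma-project} applies at each step.

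Next, I would start from the Riemannian quadratic upper bound (Lemma~\ref{lemma:lipsctz}, using $L_g\le L$),
\[
 f(\bar x_{k+1})\le f(\bar x_k)+\langle\grad f(\bar x_k),\,\bar x_{k+1}-\bar x_k\rangle+\tfrac L2\|\bar x_{k+1}-\bar x_k\|^2,
\]
and split the linear term as $-\alpha\langle\grad f(\bar x_k),\hat g_k\rangle+\langle\grad f(\bar x_k),\,\bar x_{k+1}-\bar x_k+\alpha\hat g_k\rangle$. For the first piece I would use the averaged form of \eqref{g-riemannian-lip}, namely $\|\grad f(\bar x_k)-\hat g_k\|\le\frac{L}{\sqrt n}\|\bx_k-\bar\bx_k\|$, together with the elementary inequality $ab\le\frac12a^2+\frac12b^2$ at $a=\frac1{\sqrt n}\|\bx_k-\bar\bx_k\|$, $b=L\alpha\|\hat g_k\|$, which produces exactly $-(\alpha-\tfrac{L^2}{2}\alpha^2)\|\hat g_k\|^2+\frac1{2n}\|\bx_k-\bar\bx_k\|^2$; this is the source of the announced coefficient on $\|\hat g_k\|^2$, the residual consensus term being folded into $\mathcal C_2$. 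The second piece I would bound directly by $L\|\bar x_{k+1}-\bar x_k+\alpha\hat g_k\|$ (deliberately avoiding a further Young step, so that no $\|\grad f(\bar x_k)\|^2$ term is created).

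The third step is estimating the geometric quantities $\bar x_{k+1}-\bar x_k$ and $\bar x_{k+1}-\bar x_k+\alpha\hat g_k$, for which I would reuse, mutatis mutandis, the chain \eqref{eq:tmp2}--\eqref{eq:tmp1} from the DPRGD proof with $\grad f_i(x_{i,k})$ replaced by $v_{i,k}$: (i) Lemma~\ref{lemma:quadratic} to pass from $\bar x$ to $\hat x$ at cost $O(M_2)\frac1n(\|\bx_k-\bar\bx_k\|^2+\|\bx_{k+1}-\bar\bx_{k+1}\|^2)$; (ii) Lemma~\ref{lemma-project} at base point $x_{i,k}$ with increment $-\nabla\phi_i^t(\bx_k)-\alpha v_{i,k}$, giving $x_{i,k+1}-x_{i,k}=-\grad\phi_i^t(\bx_k)-\alpha v_{i,k}+O(Q\|\nabla\phi_i^t(\bx_k)+\alpha v_{i,k}\|^2)$, hence $\hat x_{k+1}-\hat x_k=-\alpha\hat v_k-\frac1n\sum_i\grad\phi_i^t(\bx_k)+(\text{higher order})$; (iii) Lemma~\ref{lem:grad-consensus} to discard $\sum_i\grad\phi_i^t(\bx_k)$ up to $O(\sqrt n L_2\|\bx_k-\bar\bx_k\|^2)$; and (iv) the first-paragraph decomposition to replace $\hat v_k$ by $\hat g_k$ — precisely what makes $\bar x_{k+1}-\bar x_k+\alpha\hat g_k$ genuinely second order. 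Lemma~\ref{lem:diff-avg-x} with $u_{i,k}=v_{i,k}$ supplies the companion estimate of $\|\bar x_{k+1}-\bar x_k\|$ needed for $\frac L2\|\bar x_{k+1}-\bar x_k\|^2$, which seeds the $\alpha^2\|\hat v_k\|^2\le\alpha^2\frac1n\|\bv_k\|^2$ contribution to $\mathcal C_1$. Squaring these triangle bounds produces quartic terms in $\|\bx_k-\bar\bx_k\|$, $\|\bx_{k+1}-\bar\bx_{k+1}\|$ and $\alpha\|\bv_k\|$, which I would collapse to quadratic ones via the a priori bounds $\frac1n\|\bx_k-\bar\bx_k\|^2\le DL^2\alpha^2$ (Lemma~\ref{lemma:dpgta:consensus}) and $\|\bv_k\|\le\|\bs_k\|\le4\sqrt nL$ (Lemma~\ref{lemma:neibohood}, \eqref{eq:sk-bound}), then merge the remaining scalars using $\|\hat g_k\|\le L$ and $\alpha<\gamma/(192L)<1$; grouping the coefficients of $\alpha^2\frac1n\|\bs_k\|^2$, $\frac1n\|\bx_k-\bar\bx_k\|^2$ and $\frac1n\|\bx_{k+1}-\bar\bx_{k+1}\|^2$ yields $\mathcal C_1,\mathcal C_2,\mathcal C_3$.

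The main obstacle is the bookkeeping forced by this extra layer of indirection: because the step is driven by $v_{i,k}=\Pcal_{T_{x_{i,k}}\Mcal}(s_{i,k})$ while the leading negative term is naturally expressed through $\hat g_k$, one must carefully separate the normal component $\Pcal_{N_{x_{i,k}}\Mcal}(s_{i,k})$, the tracking discrepancy $s_{i,k}-\hat g_k$, and the consensus error, decide which to expose as $\frac1n\|\bs_k\|^2$ and which to absorb into the consensus terms, and — most delicately — keep the powers of $\alpha$ straight, ensuring the residual inner product contributes only $O(\alpha^2)$ rather than $O(\alpha)$, so that the coefficient $\alpha-\tfrac{L^2}{2}\alpha^2$ of $\|\hat g_k\|^2$ stays strictly positive (which it does since $\alpha<1/L^2<2/L^2$ under the hypotheses of Theorem~\ref{thm:dprgt}). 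The individual inequalities are all routine given Lemmas~\ref{lemma:lipsctz}, \ref{lemma-project}, \ref{lemma:quadratic}, \ref{lem:grad-consensus}, \ref{lem:diff-avg-x}, \ref{lemma:neibohood} and \ref{lemma:dpgta:consensus}; it is their assembly that requires care.
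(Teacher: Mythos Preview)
There is a genuine gap in your plan. You propose to bound the residual inner product $\langle\grad f(\bar x_k),\,\bar x_{k+1}-\bar x_k+\alpha\hat g_k\rangle$ by $L\,\|\bar x_{k+1}-\bar x_k+\alpha\hat g_k\|$ and then argue that this norm is ``genuinely second order''. It is not. Tracing your chain (i)--(iv), one finds
\[
\bar x_{k+1}-\bar x_k+\alpha\hat g_k=\alpha(\hat g_k-\hat v_k)+(\text{second-order terms}),
\]
and $\alpha(\hat g_k-\hat v_k)=\frac{\alpha}{n}\sum_i\Pcal_{N_{x_{i,k}}\Mcal}(s_{i,k})$ is only first order: with your decomposition (or any sharper one routed through $\Pcal_{N_{\bar x_k}\Mcal}$), its norm is bounded by a constant times $\alpha/\sqrt n$ times either the tracking error $\|\bs_k-\hat{\bf G}_k\|$ or the consensus error $\|\bx_k-\bar\bx_k\|$. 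After multiplying by $L$ and applying Young's inequality to separate, one is left with an additive constant (independent of the iterates), which the stated inequality does not permit. Your remark that the first summand is ``bounded by $\|s_{i,k}\|$ at worst'' is precisely the problem: that bound is $O(L)$, not $O(\alpha)$.

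The paper avoids this by \emph{not} passing to norms at this stage. It first isolates $b_1=\langle\hat g_k,\hat x_{k+1}-\hat x_k\rangle$, keeps the inner product $\big\langle\hat g_k,\frac{\alpha}{n}\sum_i(s_{i,k}-v_{i,k})\big\rangle$, and uses the orthogonality $\grad f_i(x_{i,k})\perp(s_{i,k}-v_{i,k})$ to rewrite it as $\frac{\alpha}{n}\sum_i\langle\hat g_k-\grad f_i(x_{i,k}),\,s_{i,k}-v_{i,k}\rangle$, a product of two separately small factors. Young's inequality with weights $(4L)^{-1}$ and $L$ then yields $\tfrac{L}{n}\|\bx_k-\bar\bx_k\|^2+\tfrac{\alpha^2L}{n}\|\bs_k\|^2$ with no leftover constant; the same trick is applied to the normal component $d_{i,2}$ of $\nabla\phi_i^t(\bx_k)+\alpha v_{i,k}$. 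Your first-paragraph decomposition invokes the same orthogonality, but only to bound $\|\Pcal_{N_{x_{i,k}}\Mcal}(\hat g_k)\|$; to obtain the stated bound you must exploit it \emph{inside} the inner product, before any Cauchy--Schwarz step.
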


\begin{proof}
It follows from Lemma \ref{lemma:lipsctz} and $L_g \leq L$ that 
\bee
\| \hat{g}_k - \grad f(\bar{x}_k) \|^2 \leq \frac{1}{n} \sum_{i=1}^n \|\grad f_i(x_k) -  \grad f_i(\bar{x}_k) \|^2 \leq \frac{L^2}{n} \|\bx_k - \bar{\bx}_k \|^2.
\eee
and
\begin{equation}\label{Lip-ineq}
    \begin{aligned}
    f(\bar{x}_{k+1})  \leq & f(\bar{x}_k) + \left<\grad f(\bar{x}_k),\bar{x}_{k+1}-\bar{x}_k  \right>+ \frac{L}{2}\|\bar{x}_{k+1}-\bar{x}_k\|^2\\
     = & f(\bar{x}_k) + \left<\hat{g}_k ,\bar{x}_{k+1}-\bar{x}_k  \right>+\left<\grad f(\bar{x}_k) -\hat{g}_k,\bar{x}_{k+1}-\bar{x}_k  \right> + \frac{L}{2}\|\bar{x}_{k+1}-\bar{x}_k\|^2 \\
      \leq & f(\bar{x}_k) + \left<\hat{g}_k ,\bar{x}_{k+1}-\bar{x}_k  \right>+ \frac{3L}{4}\|\bar{x}_{k+1}-\bar{x}_k\|^2  + \frac{1}{L} \| \grad f(\bar{x}_k) -\hat{g}_k \|^2 \\
        \leq & f(\bar{x}_k) + \left<\hat{g}_k ,\hat{x}_{k+1}-\hat{x}_k  \right>  + \left<\hat{g}_k,  \bar{x}_{k+1}- \hat{x}_{k+1}+\hat{x}_k - \bar{x}_k  \right> \\
        &+ \frac{L}{n}  \| \bx_k - \bar{\bx}_k \|^2 + \frac{3L}{4}\|\bar{x}_{k+1}-\bar{x}_k\|^2.
    \end{aligned}
\end{equation}
By Young's inequality, we get 
\begin{equation} \label{eq:grad-prod2}
    \left<\hat{g}_k,  \bar{x}_{k+1}- \hat{x}_{k+1}+\hat{x}_k - \bar{x}_k  \right> \leq \frac{\alpha^2L}{2}\| \hat{g}_k  \|^2 + \frac{1}{\alpha^2L}(\| \bar{x}_{k+1}- \hat{x}_{k+1} \|^2 + \|\hat{x}_k - \bar{x}_k\|^2).
\end{equation}
Combining \eqref{Lip-ineq} and \eqref{eq:grad-prod2} leads to
\begin{equation}\nonumber
    \begin{aligned}
     f(\bar{x}_{k+1})  \leq & f(\bar{x}_k) + \underbrace{\left<\hat{g}_k ,\hat{x}_{k+1}-\hat{x}_k  \right>}_{b_1}  +  \frac{\alpha^2L}{2}\| \hat{g}_k   \|^2 + \underbrace{ \frac{1}{\alpha^2L}(\| \bar{x}_{k+1}- \hat{x}_{k+1} \|^2 + \|\hat{x}_k - \bar{x}_k\|^2)}_{b_2}\\
     & + \frac{L}{n}  \| \bx_k - \bar{\bx}_k \|^2 + \underbrace{ \frac{3L}{4}\|\bar{x}_{k+1}-\bar{x}_k\|^2}_{b_3}. \\
    \end{aligned}
\end{equation}
Now, let us bound $b_1$, $b_2$, and $b_3$, respectively. For $b_1$, it holds that
\begin{equation} \label{eq:b1-0}
    \begin{aligned}
    b_1  =& \left< \hat{g}_k , \frac{1}{n} \sum_{i=1}^n( x_{i,k+1} - x_{i,k} + \alpha s_{i,k} + \nabla \phi_i^t(\bx_k)   ) \right> -  \left< \hat{g}_k, \frac{1}{n} \sum_{i=1}^n(  \alpha s_{i,k} + \nabla \phi_i^t(\bx_k)   ) \right> \\
    \leq &  \left< \hat{g}_k   , \frac{1}{n} \sum_{i=1}^n( x_{i,k+1} - x_{i,k} + \alpha v_{i,k} + \nabla \phi_i^t(\bx_k)  ) \right> +\left< \hat{g}_k, \frac{1}{n} \sum_{i=1}^n \alpha(s_{i,k} - v_{i,k}) \right> - \alpha  \|\hat{g}_k\|^2.
    \end{aligned}
\end{equation}
Since $s_{i,k} - v_{i,k} \in N_{x_{i,k}}\Mcal$, it follows that
\be \label{eq:b1-1}
\begin{aligned}
    & \left< \hat{g}_k, \frac{1}{n} \sum_{i=1}^n \alpha(s_{i,k} - v_{i,k}) \right>
   = \frac{\alpha}{n} \sum_{i=1}^n\left< \hat{g}_k - \grad f_i(x_{i,k}),  s_{i,k} - v_{i,k} \right> \\
   \leq & \frac{1}{4nL} \sum_{i=1}^n \|\hat{g}_k - \grad f_i(x_{i,k})  \|^2 + \frac{\alpha^2L}{n}\sum_{i=1}^n \| \Pcal_{N_{x_{i,k}}\Mcal}(s_{i,k}) \|^2\\
   \leq & \frac{1}{4n^2L} \sum_{i=1}^n \sum_{j=1}^n\|\grad f_j(x_{j,k}) - \grad f_i(x_{i,k})  \|^2 + \frac{\alpha^2L}{n}\| \bs_k \|^2 \\
   &
   \leq  \frac{L}{n} \| \bx_k - \bar{\bx}_k \|^2 + \frac{\alpha^2L}{n}\| \bs_k \|^2.
\end{aligned}
\ee
Let  $\nabla \phi_i^t(\bx_k) + \alpha v_{i,k} = d_{i,1} + d_{i,2}, $
where $d_{i,1} = \Pcal_{T_{x_{i,k}}\Mcal}(\nabla \phi_i^t(\bx_k) + \alpha v_{i,k}) , d_{i,2} = \nabla \phi_i^t(\bx_k)+ \alpha v_{i,k} - d_{i,1}$.  Therefore,
\be \label{eq:b1-2}
\begin{aligned}
&\left< \hat{g}_k   , \frac{1}{n} \sum_{i=1}^n( x_{i,k+1} - x_{i,k} + \alpha v_{i,k} + \nabla \phi_i^t(\bx_k)  ) \right> \\
= & \frac{1}{n} \sum_{i= 1}^n \left< \hat{g}_k,  \Pcal_{\Mcal}(x_{i,k} - d_{i,1} - d_{i,2}  ) - [x_{i,k}  - d_{i,1}    ]\right>+\frac{1}{n}\sum_{i= 1}^n \left< \hat{g}_k,  d_{i,2}\right> \\
\leq & \frac{LQ }{n} \sum_{i=1}^n \|  d_i\|^2 +\frac{1}{n} \sum_{i= 1}^n \left< \hat{g}_k - \grad f_i(x_{i,k}),  d_{i,2}\right>\\
\leq & \frac{LQ }{n} \sum_{i=1}^n \|  d_i\|^2 + \frac{1}{4nL}\sum_{i= 1}^n
 \|\hat{g}_k - \grad f_i(x_{i,k})\|^2 + \frac{L}{n}\sum_{i= 1}^n \|d_{i,2}\|^2 \\
  \leq & \frac{LQ+L }{n} \|(I_{nd}-\bW^t)\bx_k + \alpha \bv_k\|^2 + \frac{L^2}{n}  \|\bar{\bx}_k - \bx_k\|^2 \\
   \leq & \frac{4LQ+3L  }{n}\|\bar{\bx}_k - \bx_k\|^2  + \frac{2LQ+2L }{n}\alpha^2 \|\bs_k\|^2.
\end{aligned}
\ee
Plugging \eqref{eq:b1-1} and \eqref{eq:b1-2} into \eqref{eq:b1-0} gives
\be \label{eq:b1}
\begin{aligned}
b_1 &\leq  \frac{4LQ+ 4 L}{n} \|\bx_k - \bar{\bx}_k \|^2 +\frac{2LQ+3L}{n} \alpha^2 \| \bs_k\|^2 -  \alpha  \|\hat{g}_k\|^2,
\end{aligned}
\ee
For $b_2$, applying Lemma \ref{lemma:quadratic} and Lemma \ref{lemma:dpgta:consensus} yields
\begin{equation} \label{eq:b2}
\begin{aligned}
      b_2 & \leq \frac{M_2^2}{\alpha^2 L n^2}(\| \bx_k - \bar{\bx}_k\|^4 + \| \bx_{k+1} - \bar{\bx}_{k+1}\|^4  ) \\
      &
     \leq \frac{M_2^2D L }{ n}(\| \bx_k - \bar{\bx}_k\|^2 + \| \bx_{k+1} - \bar{\bx}_{k+1}\|^2  ),
\end{aligned}
\end{equation}
Since $\|\bv_k\|^2 \leq \|\bs_k\|^2 \leq 4\zeta^2L^2$ in Lemma \ref{lemma:neibohood}. Since $\|\hat{v}_k\|^2 = \frac{1}{n^2} \|\sum_{i=1}^n v_{i,k}\|^2 \leq \frac{1}{n}\sum_i\|v_{i,k}\|^2 = \frac{1}{n}\|\bv_k\|^2$,  combining with \eqref{eq:sum-consen-grad} yields
\begin{equation} \label{eq:b3}
    \begin{aligned}
    b_3  
     \leq &\frac{3L}{4} \left(\frac{4(8Q + \sqrt{n}L_2 + M_2)^2}{n^2} \|\bx_k - \bar{\bx}_k\|^4 +\frac{16Q^2\alpha^4}{n^2} \| \bv_k \|^4  \right. \\
     & + \left. \frac{4\alpha^2}{n} \| \bv_k  \|^2  + \frac{4M_2^2}{n^2} \| \bx_{k+1}  - \bar{\bx}_{k+1} \|^4\right)\\
      \leq & \frac{4DL^3(8Q + \sqrt{n}L_2 + M_2)^2}{n}\alpha^2 \|\bx_k - \bar{\bx}_k\|^2   + \frac{4DL^3M_2^2}{n}\alpha^2 \| \bx_{k+1}  - \bar{\bx}_{k+1} \|^2 \\
     & + \frac{192Q^2L^3\alpha^4 + 3L\alpha^2}{n}\| \bs_k \|^2.
    \end{aligned}
\end{equation}
Then, combining \eqref{eq:b1}, \eqref{eq:b2}, and \eqref{eq:b3} gives
\begin{equation}\nonumber
\begin{aligned}
   &   f(\bar{x}_{k+1})  \leq  f(\bar{x}_k) + b_1 + \frac{\alpha^2 L^2}{2} \|\hat{g}_k\|^2 + b_2 + \frac{L}{n}\|\bx_k - \bar{\bx}_k\|^2 + b_3 \\
      \leq & f(\bar{\bx}_k) - ( \alpha - \frac{L^2}{2} \alpha^2 ) \|\hat{g}_k \|^2 + \frac{(2Q+ 3)L + 192 Q^2\alpha^2L^3 + 3L}{n} \alpha^2 \|  \bs_k\|^2\\
      & + \frac{4LQ+4L + M_2^2DL +  4DL^3(8Q + \sqrt{n}L_2 + M_2)^2\alpha^2}{n} \|\bx_k - \bar{\bx}_k\|^2  \\
      & + \frac{M_2^2D L + 4DL^3M_2^2\alpha^2}{ n}  \| \bx_{k+1}  - \bar{\bx}_{k+1} \|^2,
\end{aligned}
\end{equation}
where the second inequality utilizes $\frac{1}{n}\| \bx_k - \bar{\bx}_k \|^2 \leq  D L^2 \alpha^2$ in Lemma \ref{lemma:dpgta:consensus}.
\end{proof}

With these preparations, we give the proof of Theorem \ref{thm:dprgt}.

\begin{proof}\label{eq:proof1}[Proof of Theorem \ref{thm:dprgt}].
Since $\rho_t = 2\sigma_2^t \in (0,1)$, applying \cite[Lemma 2]{xu2015augmented} to \eqref{eq:dgta:consensus} yields 
\be\label{eq:sum:consensus}
\frac{1}{n}\sum_{k=0}^{K+1}\| \bx_k - \bar{\bx}_k \|^2 \leq \tilde{C}_0 \frac{4\alpha^2}{n}\sum_{k=0}^{K+1}\| \bs_k \|^2 + \tilde{C}_1,
\ee 
where $\tilde{C}_0 = \frac{2}{(1-\rho_t)^2}$ and $\tilde{C}_1 = \frac{2}{1-\rho_t^2}\frac{1}{n}\|\bx_0 - \bar{\bx}_0\|^2$. It follows from Lemma \ref{lemma:dgta:majorized} that
\begin{equation} \label{eq:desc}
\begin{aligned}
   &  f(\bar{x}_{k+1}) \\
      \leq & f(\bar{x}_0) - (\alpha  - \frac{\alpha^2L^2}{2})\sum_{k=0}^{K} \|\hat{g}_k\|^2 + \mathcal{C}_1 \alpha^2 \frac{1}{n} \sum_{k=0}^{K} \|\bs_k \|^2 +(\mathcal{C}_2 + \mathcal{C}_3) \frac{1}{n}\sum_{k=0}^{K+1} \| \bx_k - \bar{\bx}_k\|^2\\
      \leq & f(\bar{x}_0) - \frac{\alpha    }{2} \sum_{k=0}^{K} \|\hat{g}_k\|^2 + (\mathcal{C}_1 + 4\tilde{C}_0(\mathcal{C}_2 + \mathcal{C}_2)) \alpha^2 \frac{1}{n} \sum_{k=0}^{K+1} \|\bs_k \|^2 +(\mathcal{C}_2 + \mathcal{C}_3)\tilde{C}_1 \\
       \leq & f(\bar{x}_0) - \frac{\alpha    }{2} \sum_{k=0}^{K} \|\hat{g}_k\|^2 + (\mathcal{C}_1 + 4\tilde{C}_0(\mathcal{C}_2 + \mathcal{C}_2)) \alpha^2 \frac{1}{n} \sum_{k=0}^{K} \|\bs_k \|^2 \\
       & + 4(\mathcal{C}_1 + 16\tilde{C}_0(\mathcal{C}_2 + \mathcal{C}_2)) \alpha^2L^2 +(\mathcal{C}_2 + \mathcal{C}_3)\tilde{C}_1 \\
        \leq & f(\bar{x}_0) - \frac{\alpha    }{2} \sum_{k=0}^{K} \|\hat{g}_k\|^2 + (\mathcal{C}_1 + 4\tilde{C}_0(\mathcal{C}_2 + \mathcal{C}_2)) \alpha^2 \frac{1}{n} \sum_{k=0}^{K} \|\bs_k \|^2 + \tilde{C}_4,
     \end{aligned}
\end{equation}
where we use $\alpha< \frac{1}{L^2}, \frac{1}{n}\|\bs_k\|^2 \leq 16 L^2$ and $\tilde{C}_4: = 4(\mathcal{C}_1 + 16\tilde{C}_0(\mathcal{C}_2 + \mathcal{C}_2)) \alpha^2  L^2 +(\mathcal{C}_2 + \mathcal{C}_3)\tilde{C}_1$. Next, we need to associate $\|\hat{g}_k\|^2$ with $\|\bs_k\|^2$. Since  $\| \bs_k  \|  \leq \|\bs_k - \hat{\bf G}_k \| + \|\hat{\bf G}_k \|$, we have 
\be\label{eq:tmp10}
-\sum_{k=0}^{K} \|\hat{g}_k \|^2  = -\frac{1}{n}\sum_{k=0}^{K} \|\hat{\bf G}_k \|^2  \leq \frac{1}{n} \sum_{k=0}^{K}\|\bs_k - \hat{\bf G}_k \|^2 - \frac{1}{2n}\sum_{k=0}^{K}\|\bs_k\|^2.
\ee
Applying \cite[Lemma 2]{xu2015augmented} into \eqref{eq:diff-s-g} yields
\bee
\begin{aligned}
\frac{1}{n}\sum_{k=0}^{K}\|\bs_k - \hat{\bf G}_k \|^2 &\leq \tilde{C}_2\frac{1}{n}\sum_{k=0}^{K}\|{\bf G}^{k+1} - {\bf G}^k \|^2 + \tilde{C}_3 \\
& \overset{\eqref{eq:diff-g-g}}{ \leq} \tilde{C}_2\frac{1}{n}\sum_{k=0}^{K} (32L^2\|\bx_k - \bar{\bx}_k \|^2 + 8L^2\alpha^2 \|\bs_k\|^2  ) + \tilde{C}_3 \\
& \overset{\eqref{eq:sum:consensus}}{ \leq} \tilde{C}_2(128\tilde{C}_0  + 8)L^2\alpha^2\frac{1}{n}\sum_{k=0}^{K}  \|\bs_k\|^2   + 32\tilde{C}_1\tilde{C}_2L^2  + \tilde{C}_3,
\end{aligned}
\eee
where $\tilde{C}_2 = \frac{2}{(1-\sigma_2^t)^2}$ and $\tilde{C}_3 = \frac{2}{1-\sigma^2t_2}\frac{1}{n}\|\bs_0 - \hat{\mathbf{G}}_0\|^2$.
Plugging the above inequality into \eqref{eq:tmp10} leads to
\bee
-\sum_{k=0}^{K} \|\hat{g}_k \|^2    \leq  \left[ \tilde{C}_2(128\tilde{C}_0  + 8)L^2\alpha^2 - \frac{1}{2}  \right]  \frac{1}{n}\sum_{k=0}^{K}\|\bs_k\|^2  + 32\tilde{C}_1\tilde{C}_2L^2  + \tilde{C}_3.
\eee
Since $\alpha <\min\left\{1, \frac{1}{4\left(\tilde{C}_2(128\tilde{C}_0  + 8)L^2 + 2(\mathcal{C}_1 + 4\tilde{C}_0(\mathcal{C}_2 + \mathcal{C}_2)) \right) }\right\} $,
it follows from \eqref{eq:desc} that
\bee
\begin{aligned}
&   f(\bar{x}_{k+1})\\
\leq &  f(\bar{x}_0) - \frac{\alpha    }{2}   \left[\frac{1}{2} -  \tilde{C}_2(128\tilde{C}_0  + 8)L^2\alpha^2 - 2(\mathcal{C}_1 + 4\tilde{C}_0(\mathcal{C}_2 + \mathcal{C}_2)) \alpha  \right] \frac{1}{n}  \sum_{k=0}^{K}\|\bs_k\|^2 \\
      &  +(\tilde{C}_4 + 16\tilde{C}_1\tilde{C}_2L^2 \alpha + \frac{\alpha}{2}\tilde{C}_3 ) \\
       \leq & f(\bar{x}_0) - \frac{\alpha    }{8}\frac{1}{n}     \sum_{k=0}^{K}\|\bs_k\|^2   +(\tilde{C}_4 + 16\tilde{C}_1\tilde{C}_2L^2 \alpha + \frac{\alpha}{2}\tilde{C}_3 ). 
     \end{aligned}
\eee
This implies
\bee
\frac{\alpha}{8} \sum_{k=0}^{K} \|\hat{g}_k \|^2  \leq \frac{\alpha}{8} \frac{1}{n} \sum_{k=0}^{K} \|\bs_k\|^2 \leq f(\bar{x}_0) - f^* + \tilde{C}_5,
\eee
where $\tilde{C}_5 := \tilde{C}_4 + 16\tilde{C}_1\tilde{C}_2L^2 \alpha + \frac{\alpha}{2}\tilde{C}_3$ and $f^* = \min_{x\in \Mcal} f(x)$. Furthermore, 
\bee
\min_{k=0,\cdots,K} \| \hat{g}_k\|^2 = \min_{k=0,\cdots,K} \| \hat{s}_k\|^2 \leq  \min_{k=0,\cdots,K} \frac{1}{n}\|\bs_k\|^2 \leq \frac{8(f(\bar{x}_0) - f^* + \tilde{C}_5)}{\alpha K}.
\eee
Then, it follows from \eqref{eq:sum:consensus} that
\bee
\min_{k=0,\cdots,K} \frac{1}{n} \|\bx_k - \bar{\bx}_k \|^2  \leq \frac{32\tilde{C}_0\alpha(f(\bar{\bx}_0) - f^* + \tilde{C}_5) + \tilde{C}_1}{ K}.
\eee
Since 
$
\|\grad f(\bar{x}_k) \|^2  \leq 2\| \hat{g}_k\|^2 + 2\|\grad f(\bar{x}_k)  - \hat{g}_k \|^2  \leq 2\| \hat{g}_k\|^2 + \frac{2L^2}{n}\| \bx_k - \bar{\bx}_k\|^2. 
$
We conclude that 
\bee
\min_{k=0,\cdots,K} \|\grad f(\bar{x}_k) \|^2  \leq \frac{(16 +64\tilde{C}_0\alpha^2 )(f(\bar{x}_0) - f^* + \tilde{C}_5)+ 2L^2 \tilde{C}_1\alpha}{\alpha K}.
\eee
The proof is completed.
\end{proof}

\section{Numerical experiments}
In this section, we compare our proposed DPRGD (Algorithm \ref{alg:drpgd}) and DPRGT (Algorithm \ref{alg:drgta}) with DRDGD and DRGTA in \cite{chen2021decentralized} on three problems: Decentralized principal component analysis, decentralized generalized eigenvalue problem, and decentralized low-rank matrix completion.

\subsection{Decentralized principal component analysis} \label{sub:pca}
We perform numerical tests on the following decentralized principal component analysis problem:
\be \label{prob:pca}
\min _{\mathbf{x} \in \mathcal{M}^{n}}-\frac{1}{2 n} \sum_{i=1}^{n} \text{tr}(x_{i}^{\top} A_{i}^{\top} A_{i} x_{i}), \quad \text { s.t. } \quad x_{1}=\ldots=x_{n},
\ee
where $\Mcal^n:=\underbrace{{\rm St}(d,r) \times \cdots \times {\rm St}(d,r)}_{n}$, $A_{i} \in \mathbb{R}^{m_{i} \times d}$ is the local data matrix in $i$-th agent, and $m_{i}$ is the sample size.
Note that for any solution $x^*$ of \eqref{prob:pca}, $x^*Q$ with an orthogonal matrix $Q \in \R^{r\times r}$ is also a solution. We use the function
\[ d_s(x, x^*) := \min_{Q\in \R^{r\times r},\; Q^\top Q = QQ^\top = I_d} \; \|xQ - x^*\| \]
to compute the distance between two points $x$ and $x^*$.

\subsubsection{Synthetic dataset}
We fix $m_{1}=\ldots=m_{n}=1000, d=10$, and $r=5$. We then generate a matrix $B \in \R^{1000n  \times d}$ and do the singular value decomposition
\[ B = U \Sigma V^\top, \]
where $U \in \R^{1000n \times d}$ and $V \in \R^{d\times d}$ are orthogonal matrices, and $\Sigma \in \R^{d \times d}$ is a diagonal matrix. To control the distributions of the singular values, we set $\tilde{\Sigma} = {\rm diag} (\xi^j)$ with $\xi \in (0,1)$. Then, $A$ is set as
\[ A = U \tilde{\Sigma} V^\top \in \R^{1000n \times d}. \]
$A_i$ is obtained by randomly splitting the rows of $A$ to $n$ subsets with equal cardinalities. It is easy to check the first $r$ columns of $V$ form the solution of \eqref{prob:pca}. In the experiments, we set $\xi$ and $n$ to $0.8$ and $8$, respectively.

We employ fixed step sizes for all algorithms. For DPRGT and DRGTA, we use the step size $\alpha=\frac{\hat{\beta}n}{\sum_{i=1}^n m_i}$. For DPRGD and DRDGD, the step size is set to $\alpha = \frac{\hat{\beta}}{\sqrt{K}}$ with $K$ being the maximal number of iterations. The grid search is utilized to find the best $\hat{\beta}$ for each algorithm. We choose the polar decomposition as the retraction operators for DRDGD and DRGTA. We test several graph matrices to model the topology across the agents, namely, the Erdos-Renyi (ER) network with probability $p = 0.3, 0.6$, and the Ring network. Throughout this section, we select the mixing matrix $W$ to be the Metropolis constant edge weight matrix \cite{shi2015extra}.

The results of different algorithms are presented in Figures \ref{fig:num-pca-consensus}, \ref{fig:num-pca-graph}, and \ref{fig:num-pca-alg}. It can be seen from Figure \ref{fig:num-pca-consensus} that the single-step consensus (i.e., $t=1$) gives very close performance to those of multiple-step consensuses (i.e., $t=10$) although the convergence restricts a large $t$. This phenomenon has also been observed in \cite{chen2021decentralized}.  In Figure \ref{fig:num-pca-graph}, DPRGT performs very similarly under different graphs. For DPRGD, a densely connected graph (e.g., ER $p=0.6$) will help to obtain a better final solution. For Figure \ref{fig:num-pca-alg}, we see DPRGT and DRGTA have very close trajectories on the consensus error, the objective function, the gradient norm, and the distance to the global optimum. This also occurs for DPRGD and DRDGD. In fact, it follows from Lemma \ref{lemma-project} that the difference between the updates in DPRGD and DRDGD is controlled by the square of the consensus error. As shown in Figure \ref{fig:num-pca-alg}, DPRGD and DRDGD do not converge to stationary points under the choice of fixed step sizes.

\begin{figure}[htp]
	\centering
	\includegraphics[width = 0.45 \textwidth]{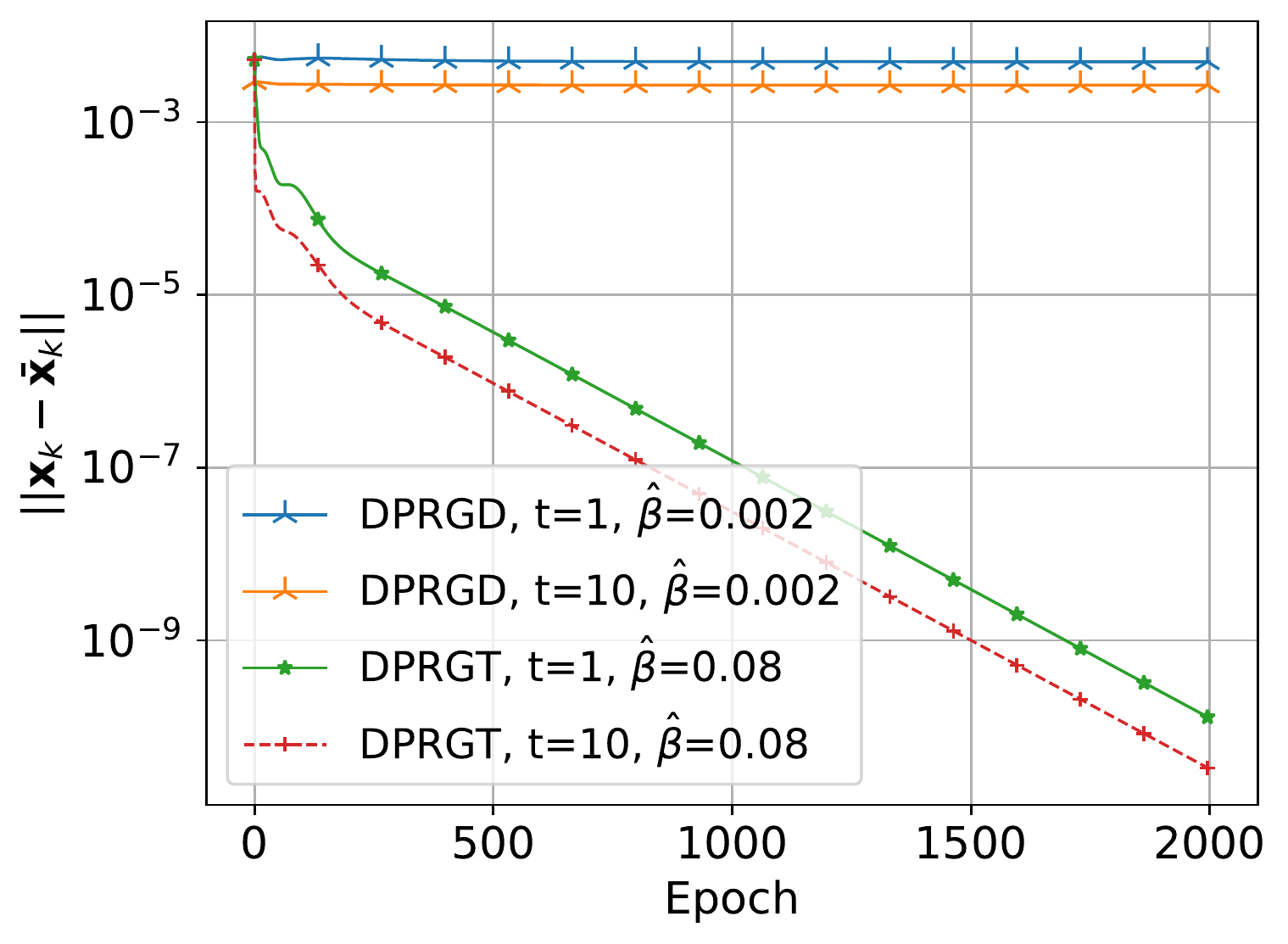}
	\includegraphics[width = 0.45 \textwidth]{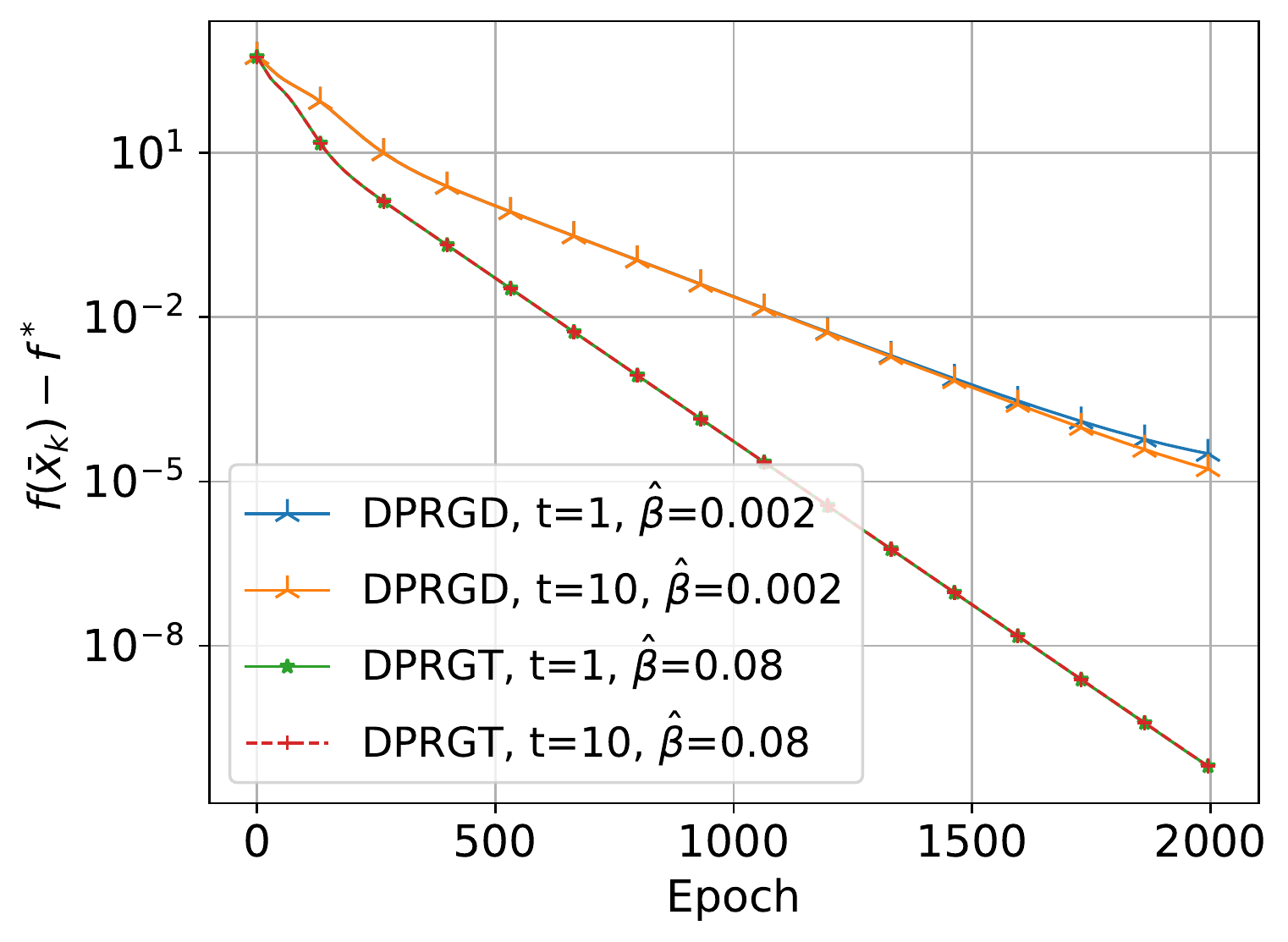} \\
	\includegraphics[width = 0.45 \textwidth]{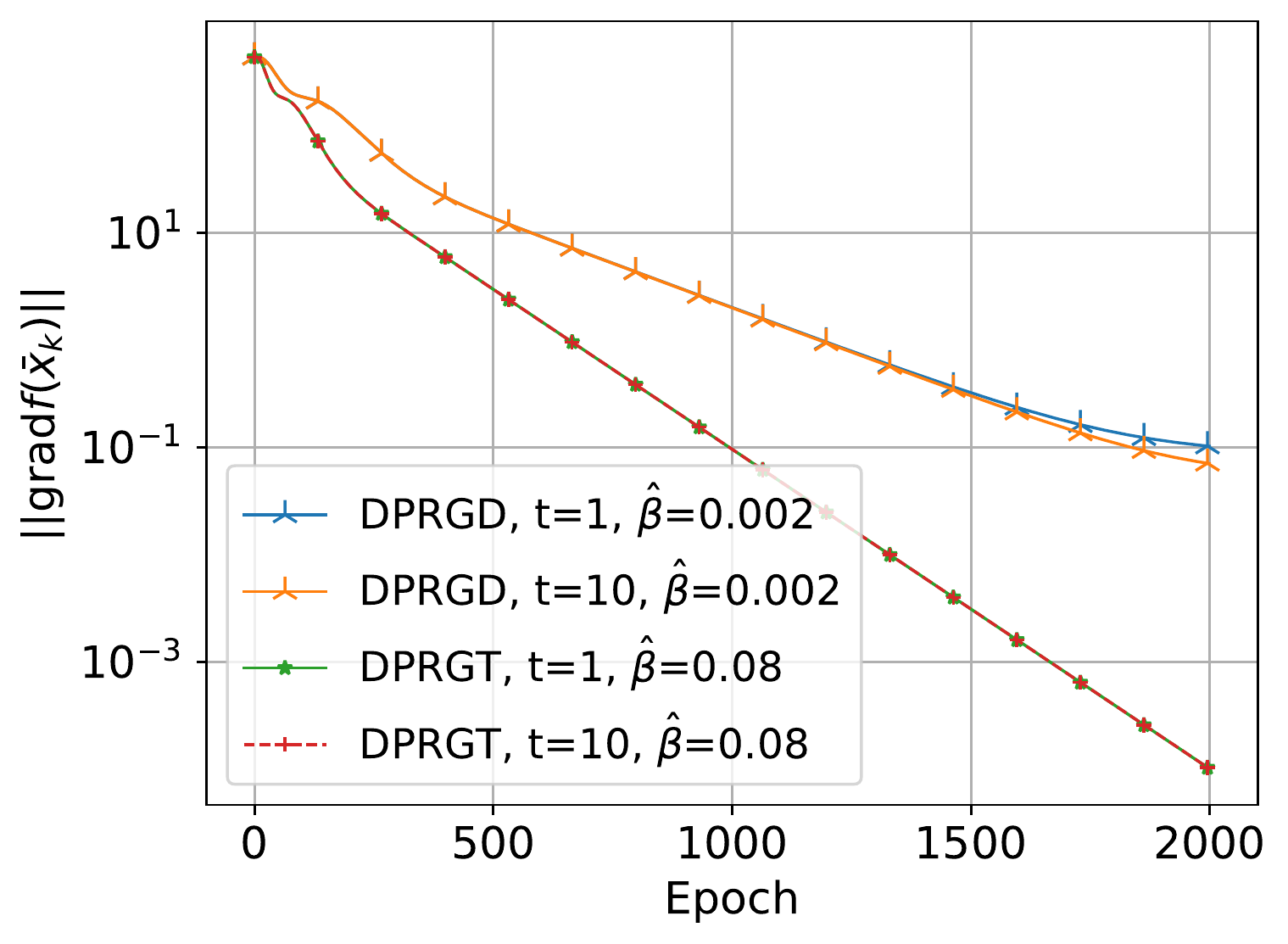}
	\includegraphics[width = 0.45 \textwidth]{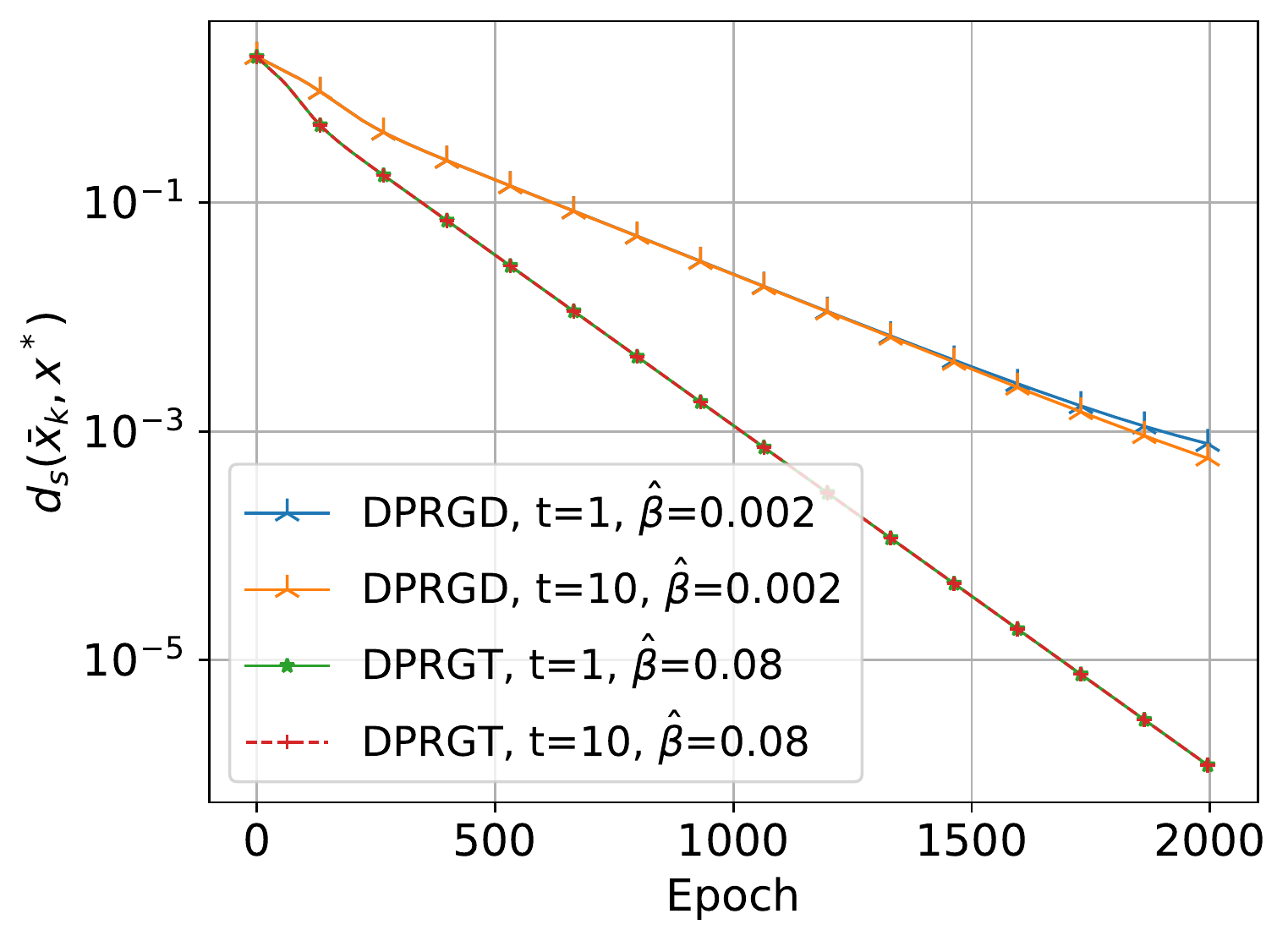}
	\caption{Numerical results on synthetic dataset with different numbers of consensus steps on graph ER $p=0.6$.}	
	\label{fig:num-pca-consensus}
\end{figure}

\begin{figure}[htp]
	\centering
	\includegraphics[width = 0.45 \textwidth]{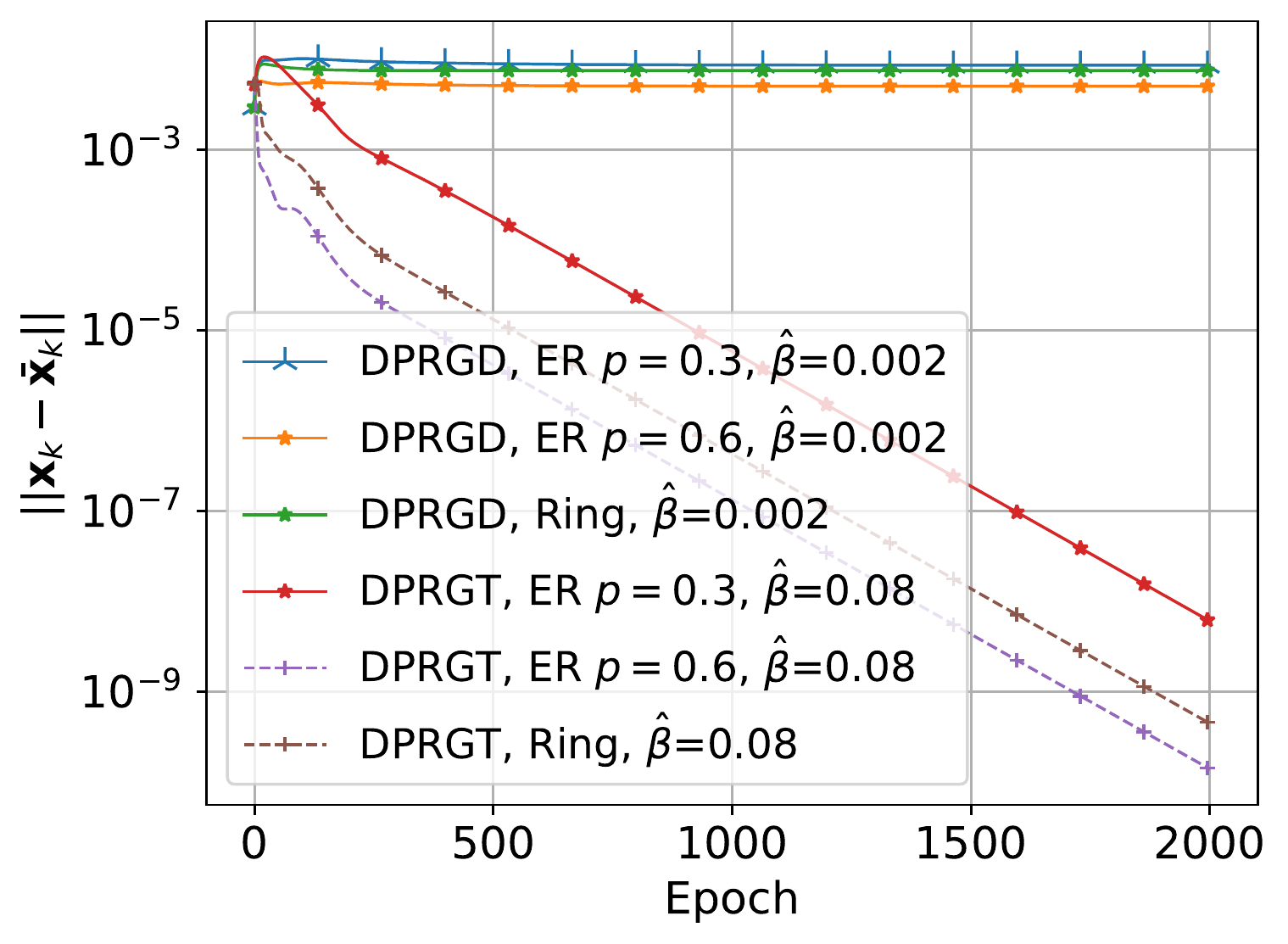}
	\includegraphics[width = 0.45 \textwidth]{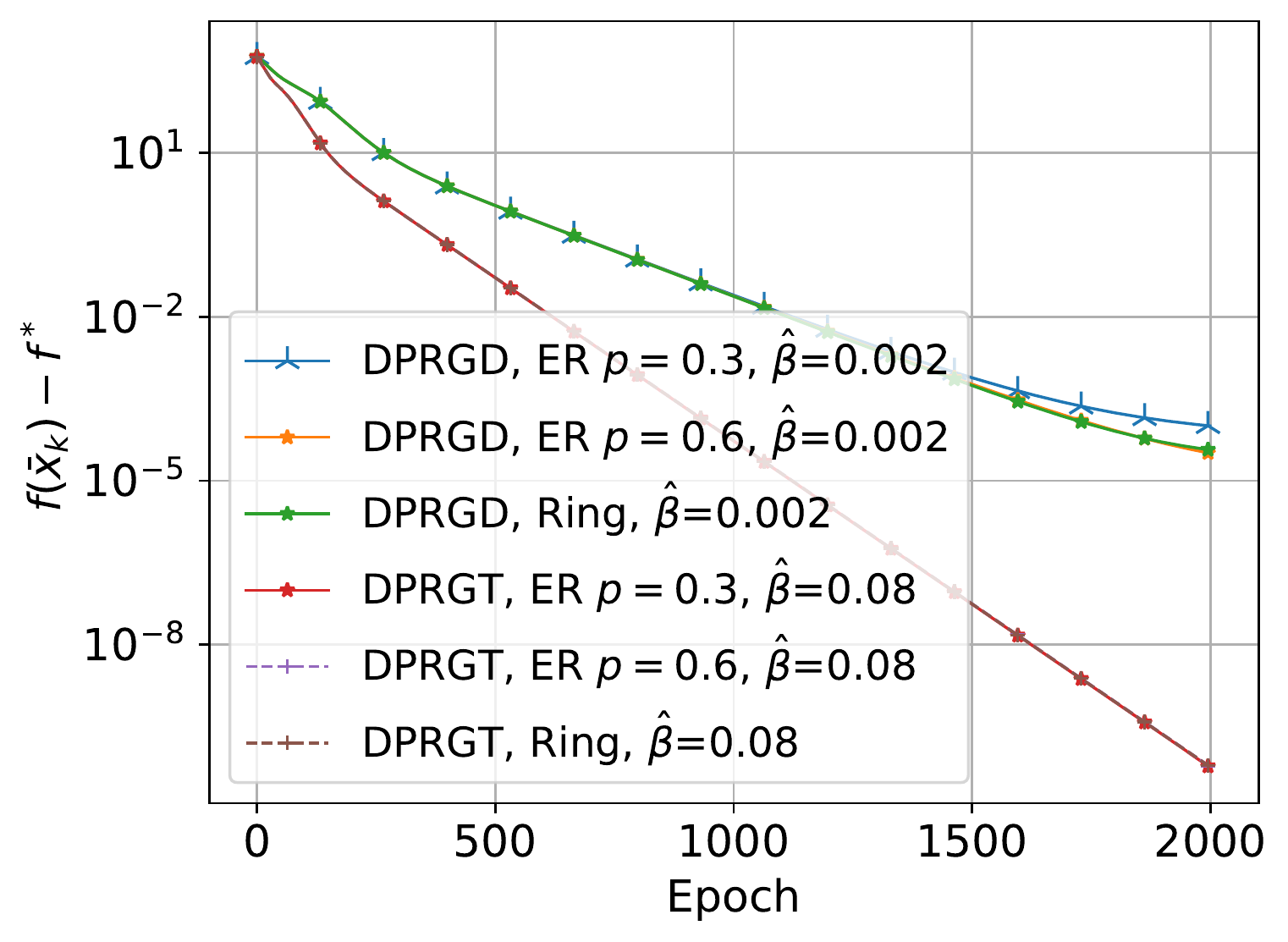} \\
	\includegraphics[width = 0.45 \textwidth]{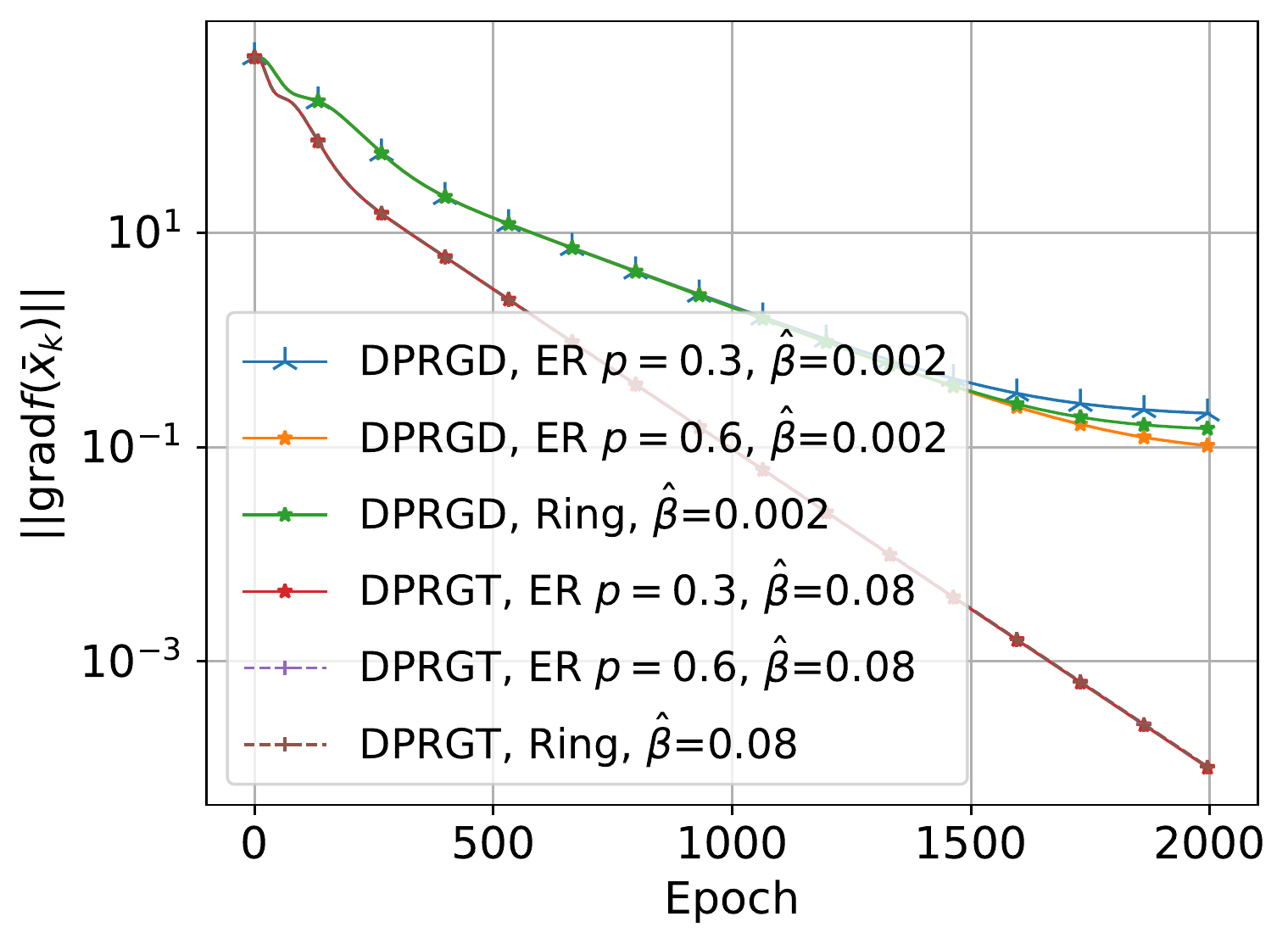}
	\includegraphics[width = 0.45 \textwidth]{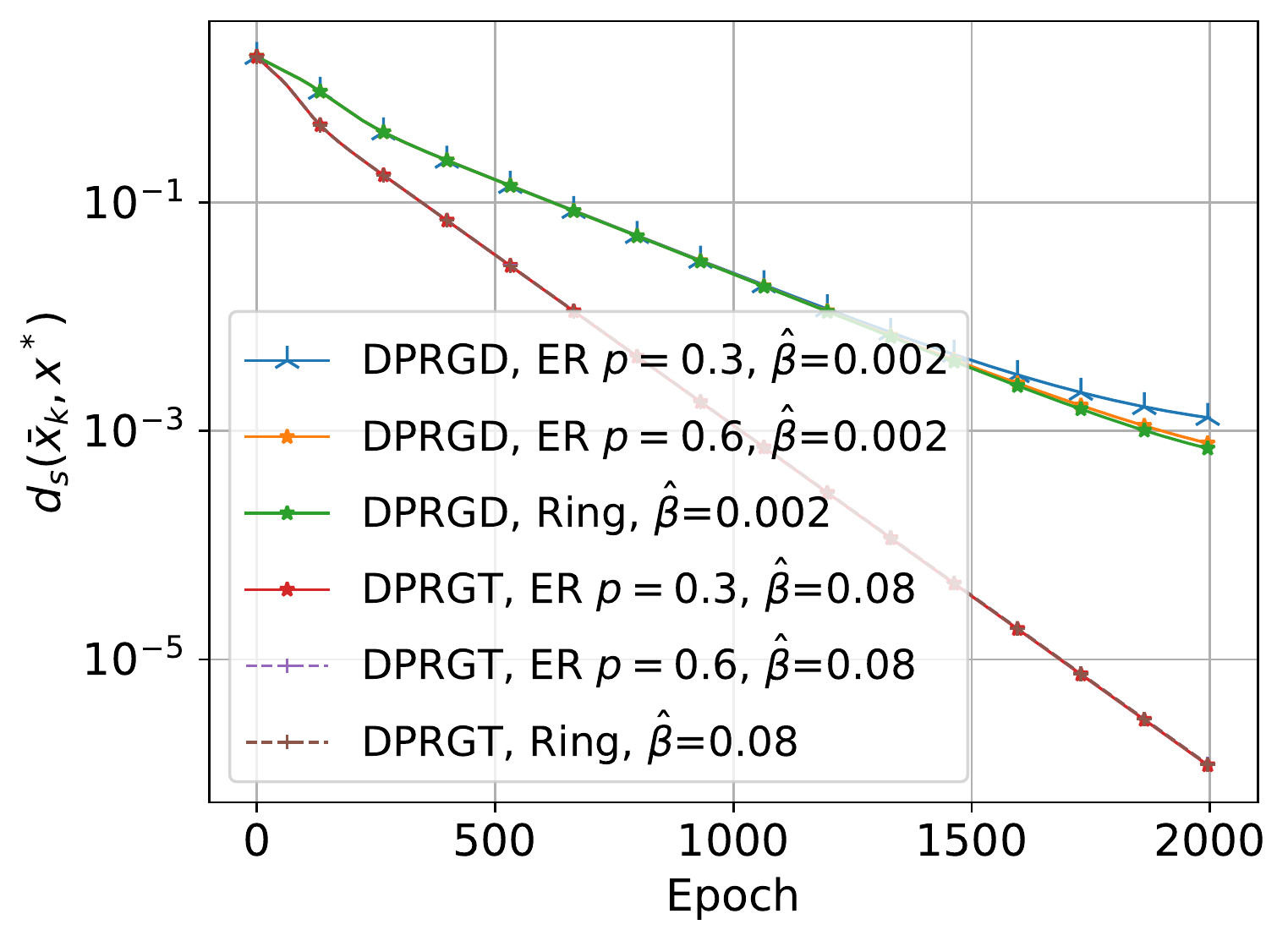}
	\caption{Numerical results on the synthetic dataset with different network graphs and single-step consensus.}	
	\label{fig:num-pca-graph}
\end{figure}

\begin{figure}[htp]
	\centering
    \includegraphics[width = 0.45 \textwidth]{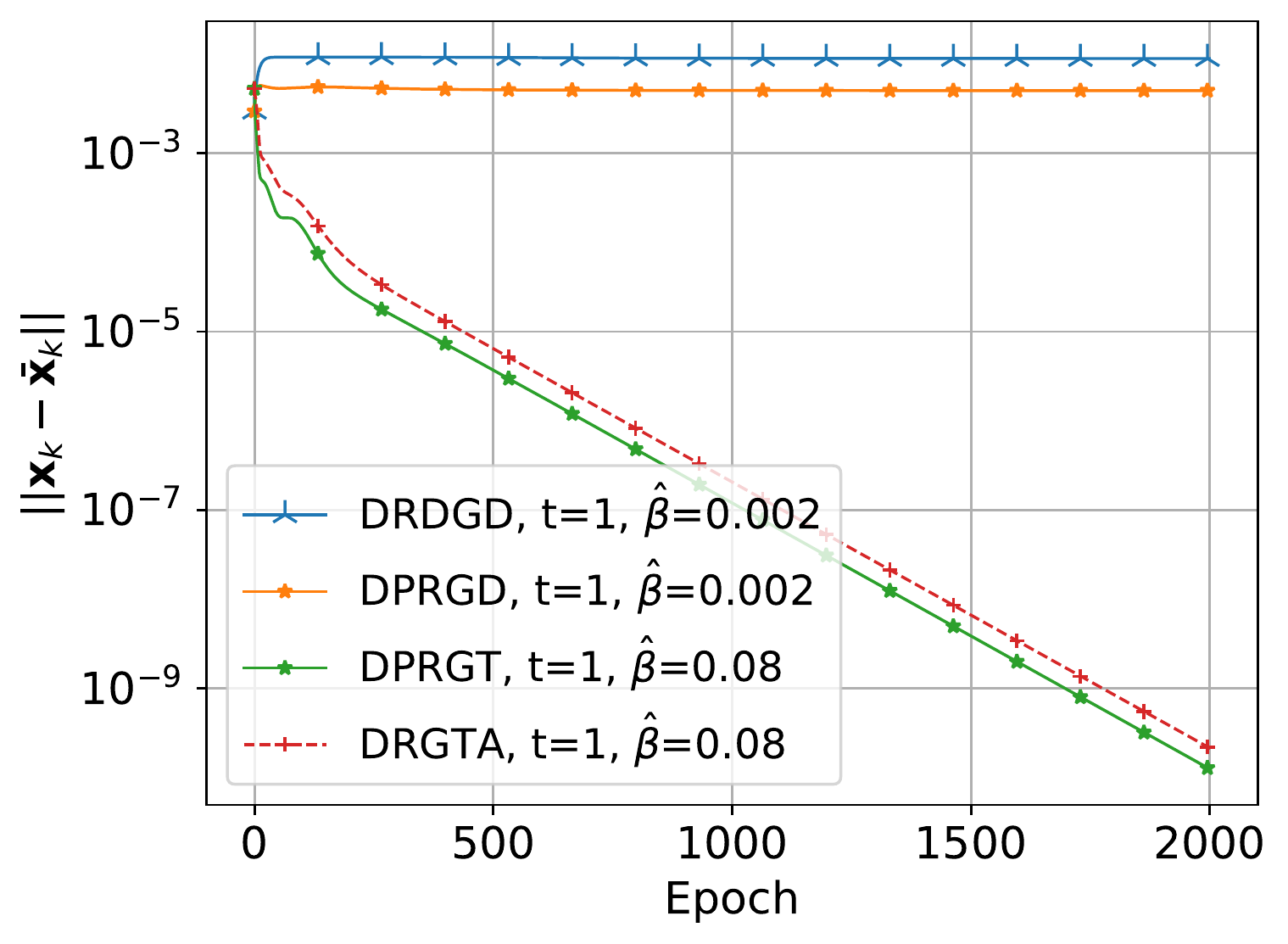}
    \includegraphics[width = 0.45 \textwidth]{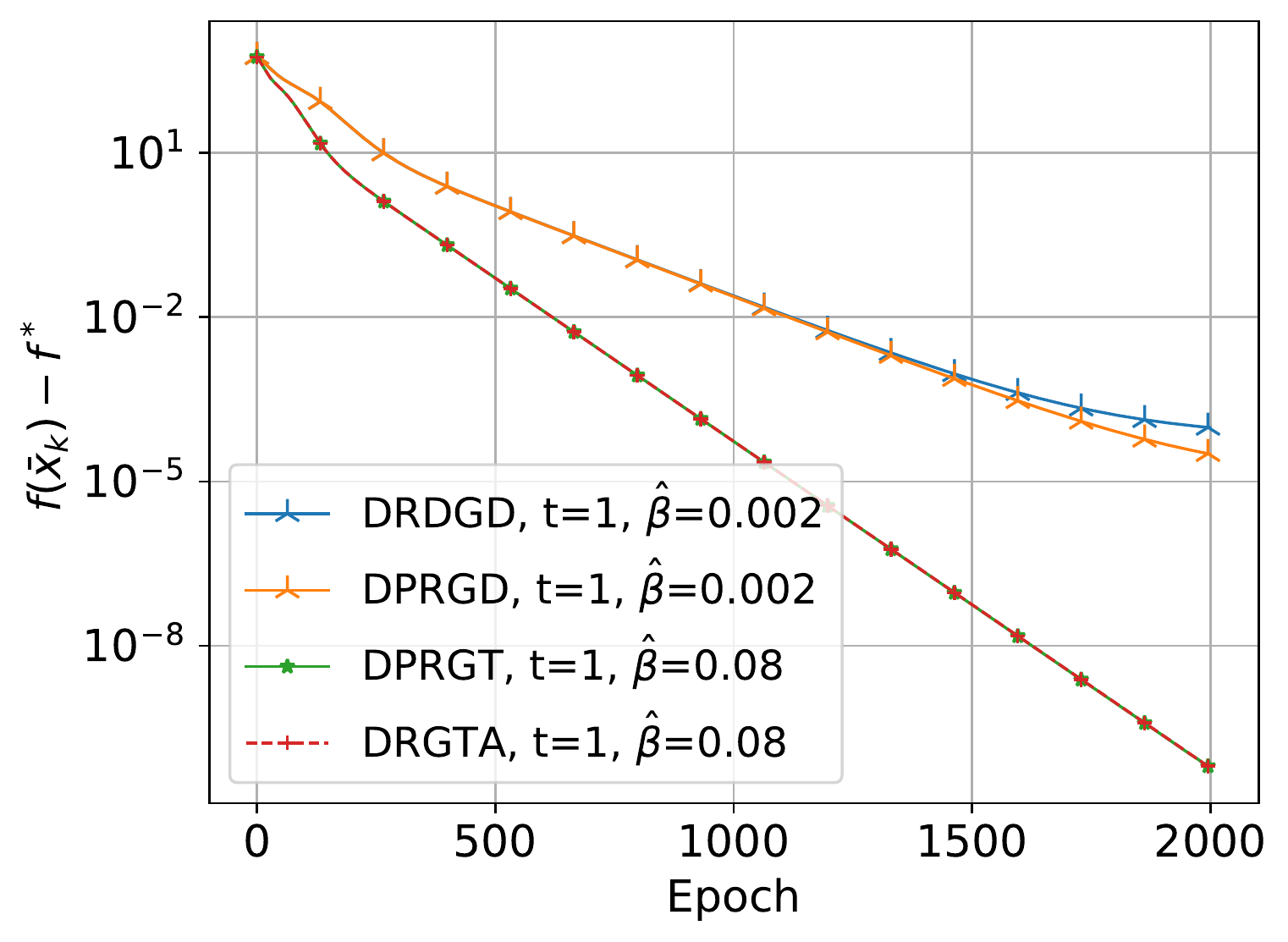} \\
    \includegraphics[width = 0.45 \textwidth]{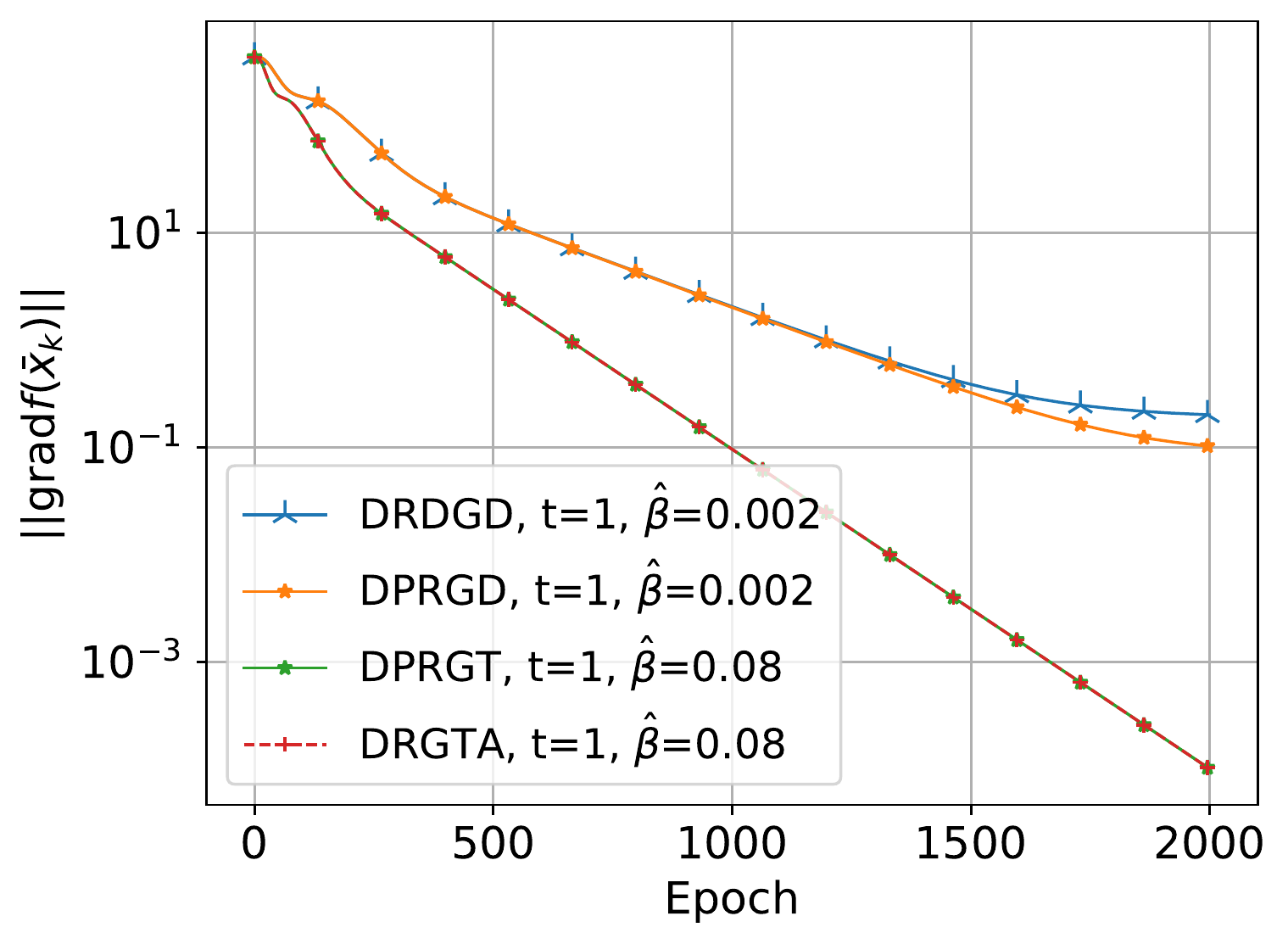}
    \includegraphics[width = 0.45 \textwidth]{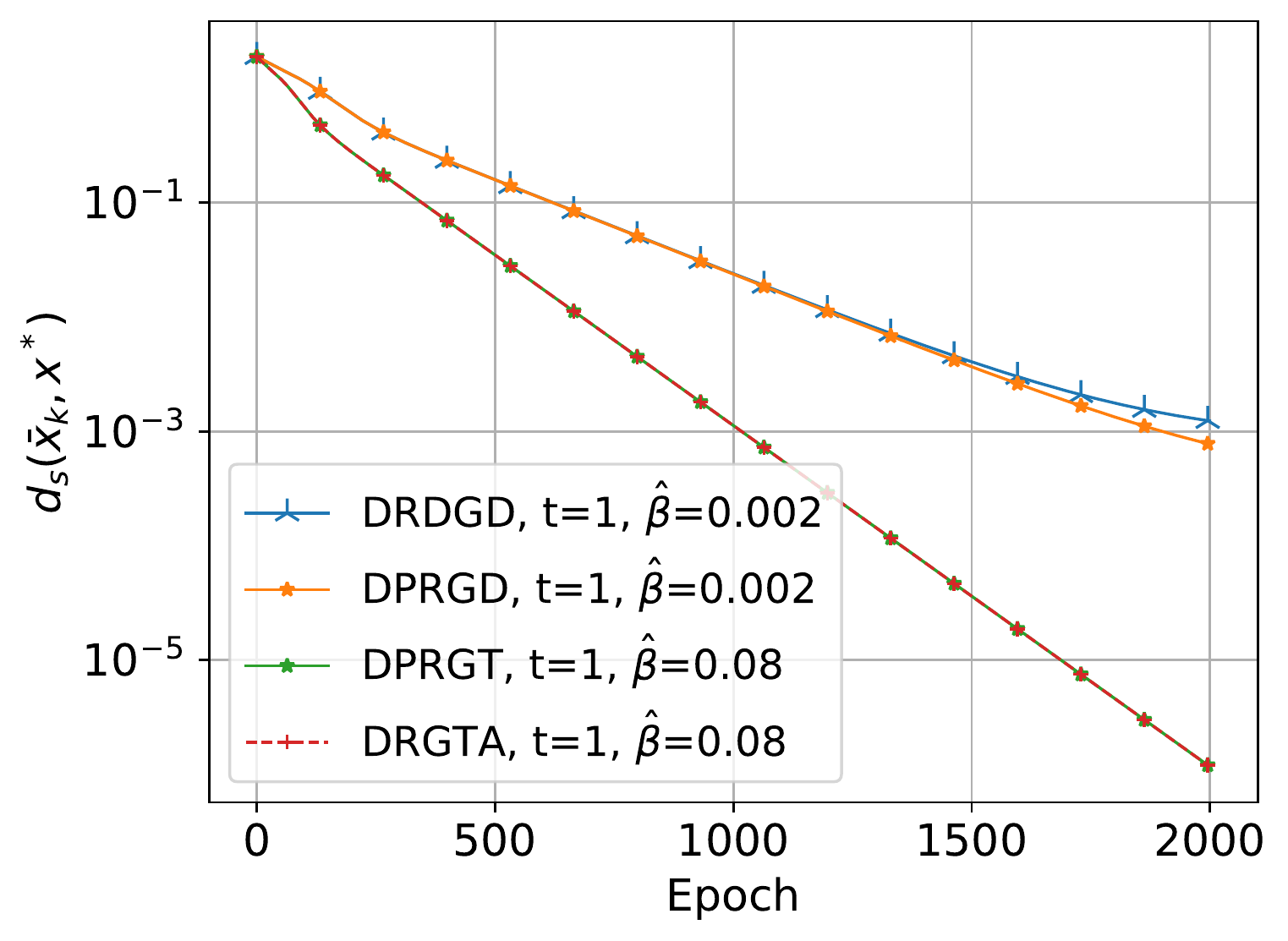}
	\caption{Numerical results on the synthetic dataset with ER $p=0.6$.}	
	\label{fig:num-pca-alg}
\end{figure}



\subsubsection{Mnist dataset}
To evaluate the efficiency of our proposed method, we also do numerical tests on the Mnist dataset \cite{lecun1998mnist}. The testing images consist of 60000 handwritten images of size $32 \times 32$ and are used to generate $A_i$'s. We first normalize the data matrix by dividing 255 and randomly split the data into $n=8$ agents with equal cardinality. Then, each agent holds a local matrix $A_i$ of dimension $\frac{60000}{n} \times 784$. We compute the first 5 principal components, i.e., $d=784, r=5$.

For all algorithms, we use the fixed step sizes $\alpha = \frac{\hat{\beta}}{60000}$ with a best-chosen $\hat{\beta}$.
Similar to the above setting, we see from Figure \ref{fig:num-pca-real} that DPRGT is very close to DRGTA, and both converge much faster than DPRGD and DRDGD.

\begin{figure}[htp]
	\centering
	\includegraphics[width = 0.45 \textwidth]{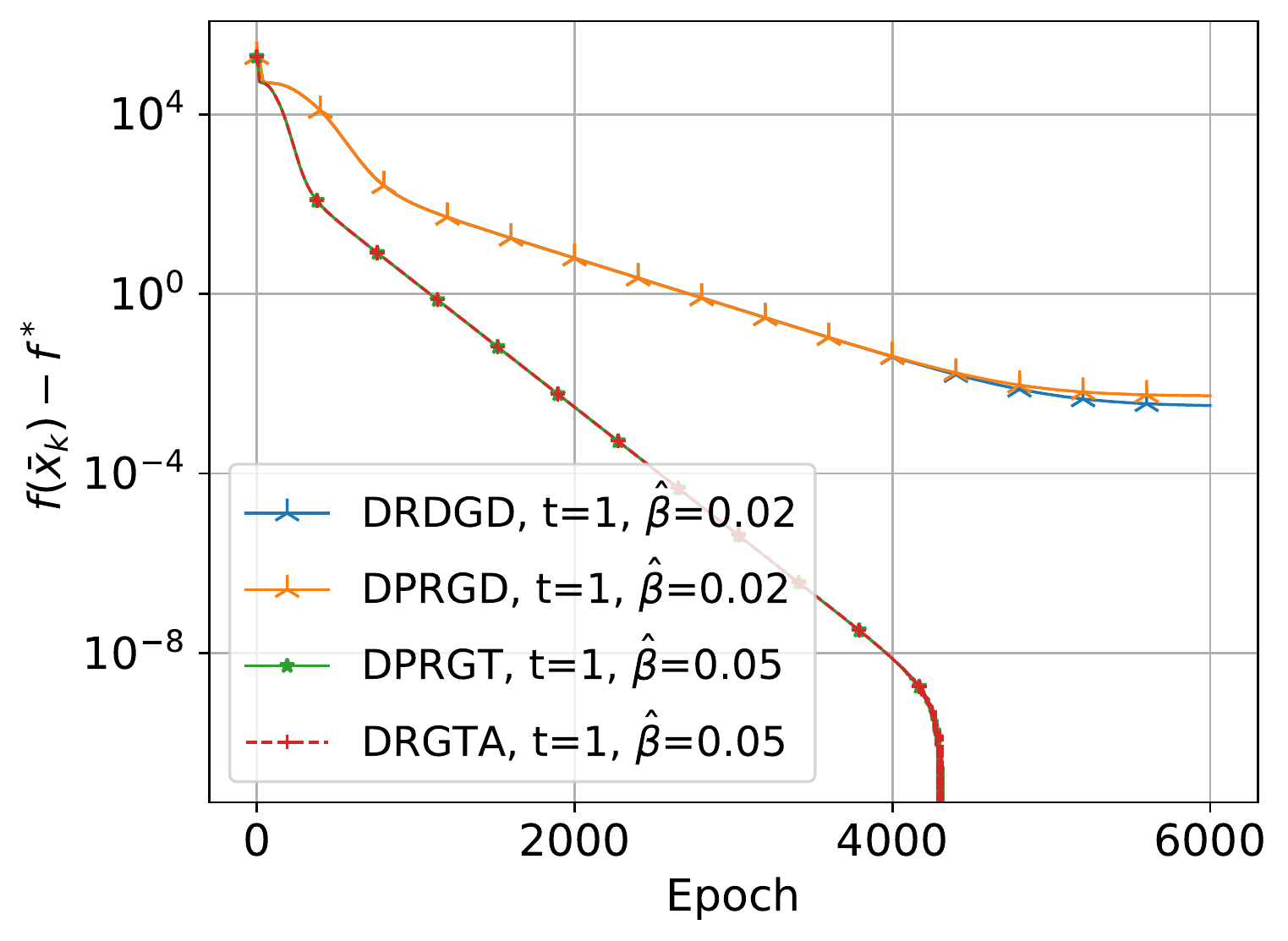}
	\includegraphics[width = 0.45 \textwidth]{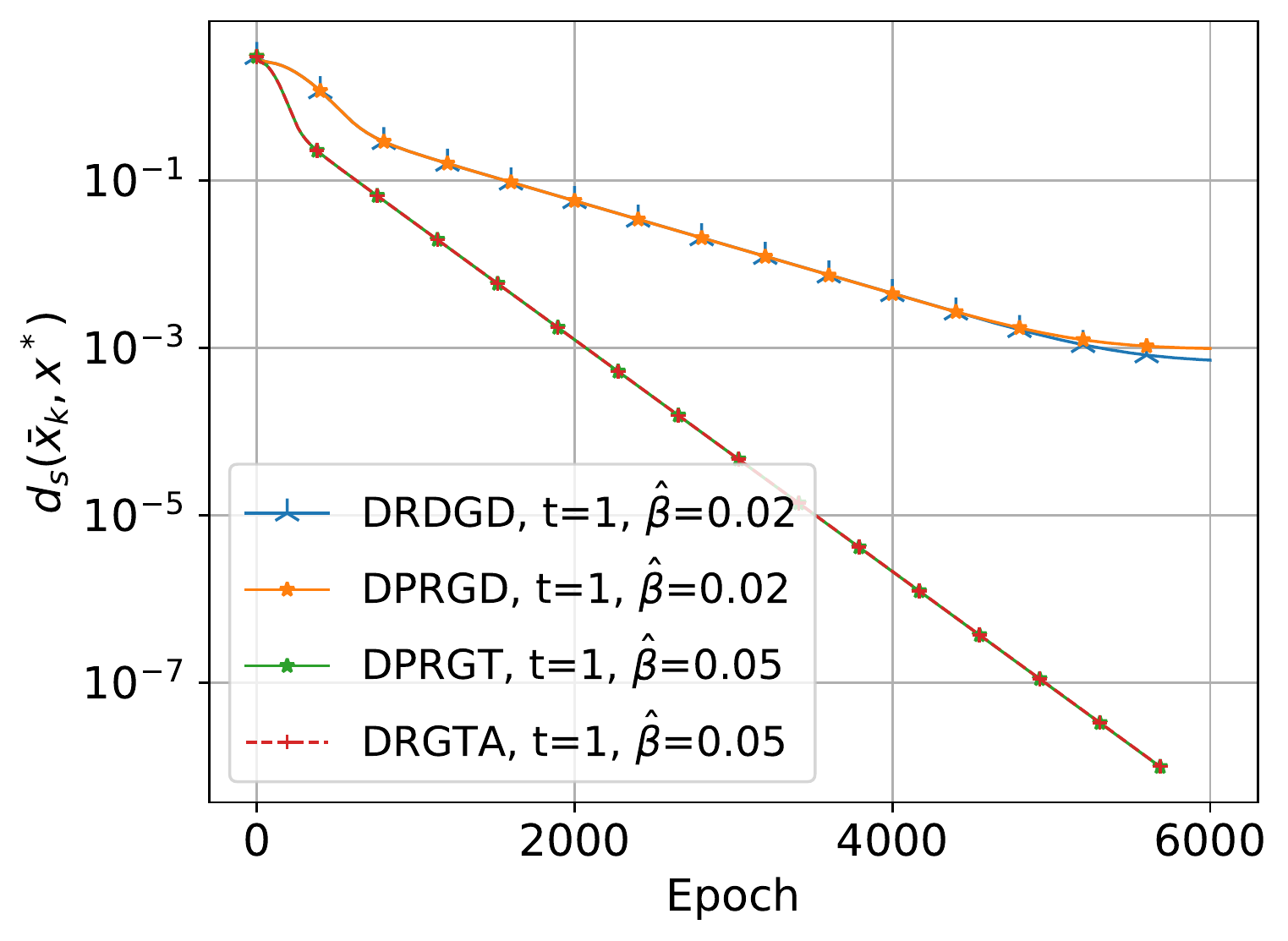}
	\caption{Numerical results on Mnist dataset with graph ER $p=0.3$.}	
	\label{fig:num-pca-real}
\end{figure}



\subsection{Decentralized generalized eigenvalue problem}
Consider the following decentralized generalized eigenvalue problem:
\be \label{prob:geig}
\begin{aligned}
\min_{\bx} \quad &\frac{1}{2n} \sum_{i=1}^{n} x_i^\top A_i^\top A_i x_i,  \\
\st \quad & x_1=\cdots = x_n, \\
& x_i^\top B x_i = I_r, \quad \forall i \in [n],
\end{aligned} \ee
where $B$ is a positive definite matrix. The optimization algorithms for solving the centralized version of \eqref{prob:geig} have been well studied in \cite{golub2013matrix}. Applications with form \eqref{prob:geig} appear in the kernel supervised PCA \cite{barshan2011supervised}, Fisher discriminant analysis \cite{fisher1936use}, and canonical correlation analysis \cite{hotelling1992relations,chaudhuri2009multi}.

In the test, we set $n=8$ and use the same way in Section \ref{sub:pca} to generate $A_i$'s. For $B$, we first generate an orthogonal matrix $Q \in \R^{d \times d}$ and then set $B = Q \Lambda Q^\top$ with $\Lambda = {\rm diag}(1.1, 1.1^{0.5}, \ldots, 1.1^{d/2-0.5})$. We set $d=10$ and $r=5$, i.e., $x_i \in \R^{10\times 5}$ for any $i \in [n]$. For the network topology matrix, we use the ER network with $p = 0.6$. For the generalized Stiefel manifold ${\rm St}_B(d,r):={x \in \R^{d\times r}: x^\top B x = I_r}$, the projection is the polar decomposition on ${\rm St}_B(d,r)$ defined in \cite{shustin2023riemannian}. We also use it as the retraction operator for DRDGD and DRGTA. The results are presented in Figure \ref{fig:num-gpca}. DPRGT outperforms all other algorithms in terms of the objective function value and the distance to the ground truth.

\begin{figure}[htp]
	\centering
	\includegraphics[width = 0.45 \textwidth]{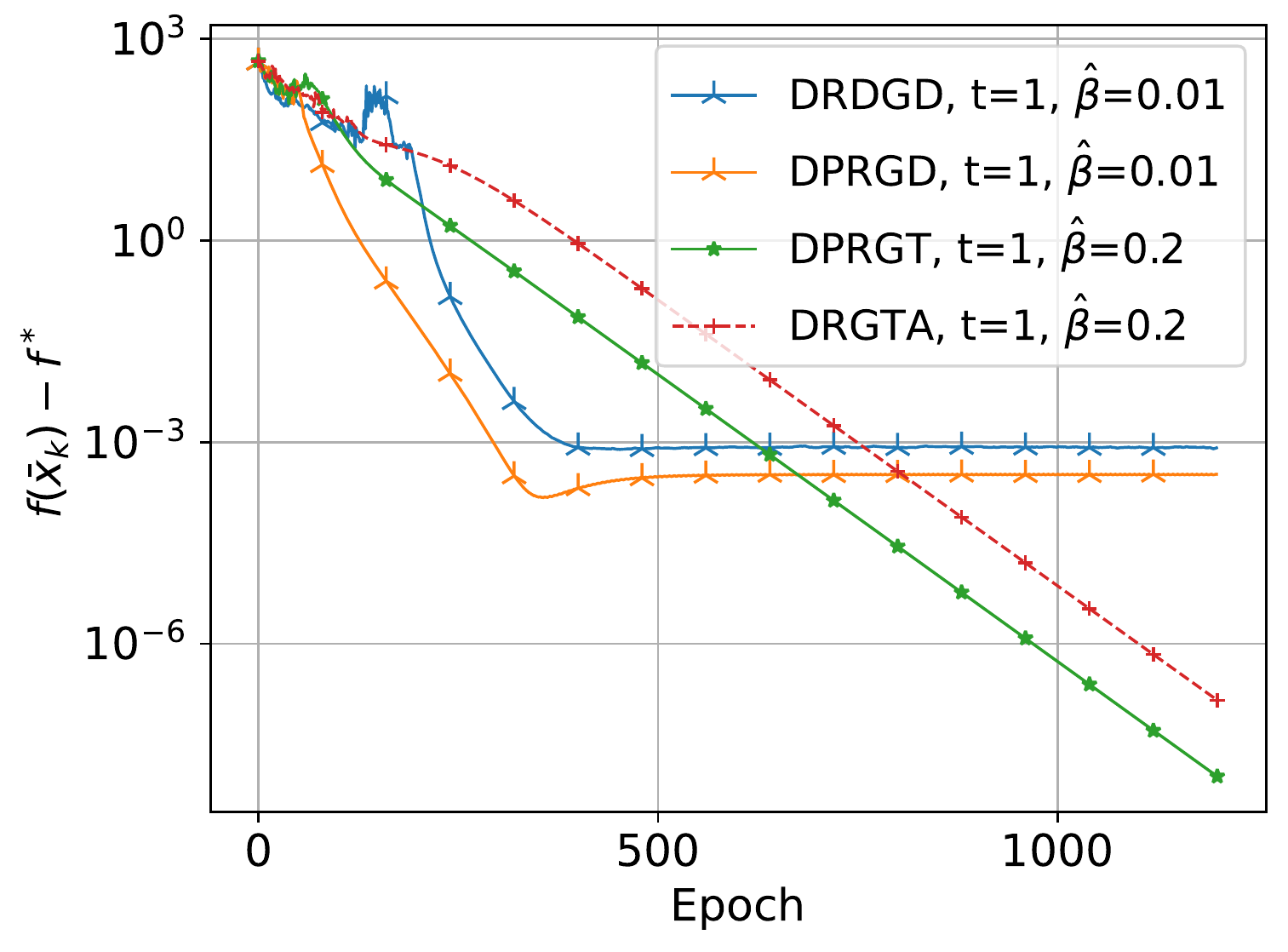}
    \includegraphics[width = 0.45 \textwidth]{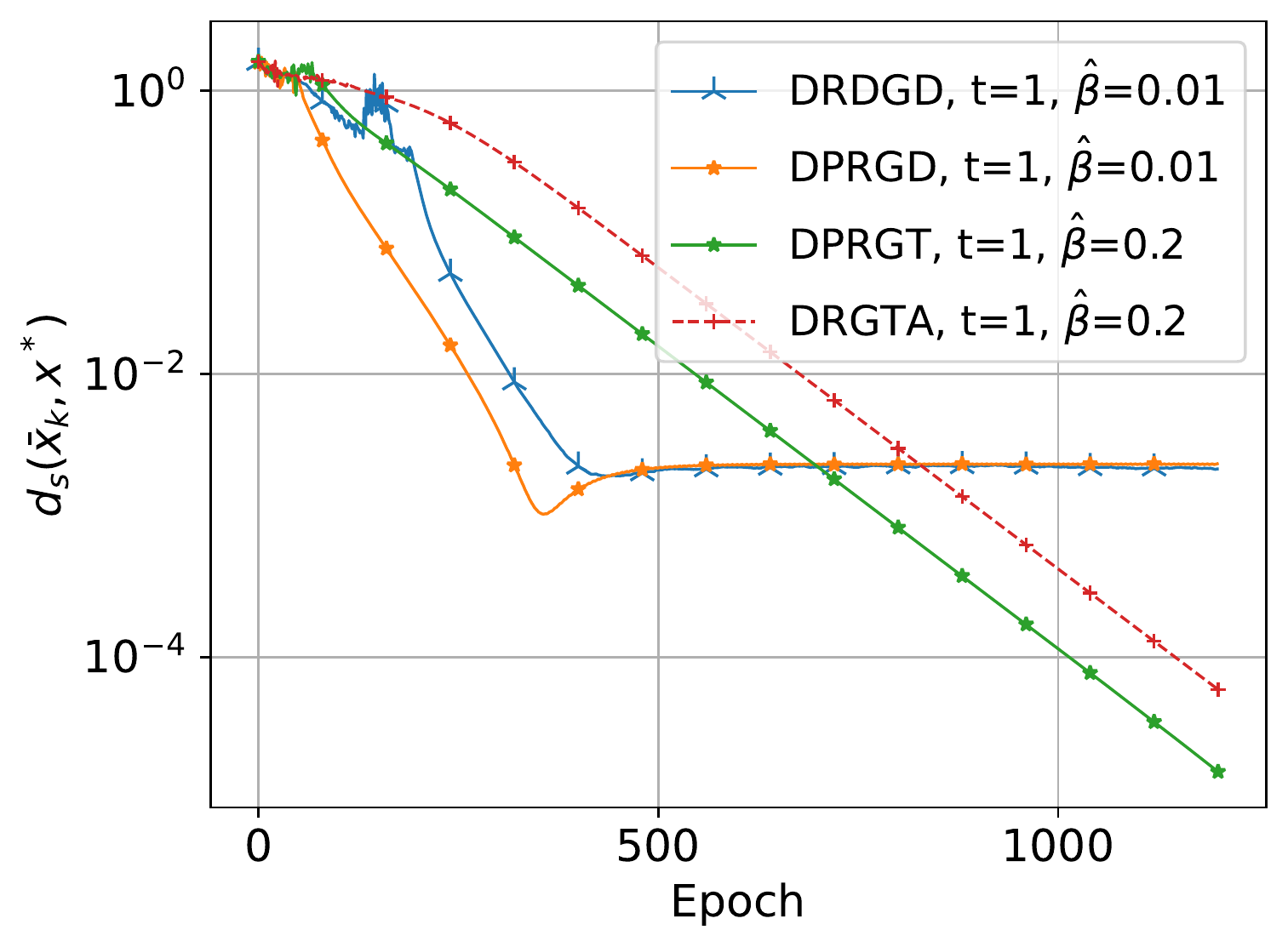}
	\caption{Numerical results for decentralized generalized eigenvalue problem on a synthetic dataset with graph ER $p=0.6$.}
	\label{fig:num-gpca}	
\end{figure}

\subsection{Decentralized low-rank matrix completion}
The goal of the low-rank matrix completion problem (LRMC) is to recover a low-rank matrix $A \in \R^{m \times T}$ from its partial observations. Let $\Omega$ be the set of indices of known entries in $A$, the rank-$r$ LRMC problem can be written as
\be \label{prob:lrmc} \min_{X \in {\rm Gr}(d,r), V\in \R^{r \times T}} \quad \frac{1}{2} \| \Pcal_{\Omega}(XV- A) \|^2, \ee
where ${\rm Gr}(d,r):=\{ {\rm all~} r {\rm ~dimensional~subspaces~in~}\R^d \}$ is the Grassmann manifold and the projection operator $\Pcal_{\Omega}$ is defined in an element-wise manner with $(\Pcal_{\Omega}(A))_{ij} = A_{ij}$ if $(i,j) \in \Omega$ and $0$ otherwise.

For the decentralized setting, let us consider the case when the data matrix $\Pcal_{\Omega}(A)$ is equally divided into $n$ agents by columns, denoted by $A_1, A_2, \ldots, A_n$. As the Grassmann manifold ${\rm Gr}(n,d)$ is a quotient manifold with its total space being ${\rm St}(n,d)$, we can also regard \eqref{prob:lrmc} as an optimization on the Stiefel manifold, which is similar to the case of the principal component analysis in Section \ref{sub:pca}. We refer to \cite{absil2009optimization,hu2022riemannian} for the equivalence of the optimization algorithms between the quotient manifold and its total space.
Then, the decentralized LRMC problem is
\be \label{prob:dlrmc}
\begin{aligned}
\min \quad & \frac{1}{2} \sum_{i=1}^n \| \Pcal_{\Omega_i}(X_i V_i(X) - A_i) \|^2,  \\
\st \quad & X_1 = X_2 = \cdots = X_n, \\
& X_i \in {\rm St}(d,r), \quad \forall i \in [n],
\end{aligned}
\ee
where $\Omega_i$ is the corresponding indices set of $\Omega$ and $V_i(X):= {\rm argmin}_{V} \| \Pcal_{\Omega_i} (XV - A_i)\|$.

For numerical tests, we consider random generated $A$. To be specific, we first generate two random matrices $L \in \R^{m \times r}$ and $R \in \R^{r \times T}$, where each element obeys the standard Gaussian distribution. For the indices set $\Omega$, we generate a random matrix $B$ with each element following from the uniform distribution, then set $\Omega_{ij} = 1$ if $B_{ij} \leq \nu$ and $0$ otherwise. The parameter $\nu$ is set to $r(m+T-r)/(mT)$. In the implementations, we set $T=1000, m=100, r=5$. The fixed step sizes are chosen for all algorithms. The Ring graph is used. We set $\alpha = n \hat{\beta}$ For DPRGT and DRGTA and $\alpha = \frac{\hat{\beta}}{\sqrt{K}}$ with $K$ is the maximal number of iterations for DPRGD and DRDGD. $\hat{\beta}$ is tuned to get the best performance for each algorithm individually.
The results are reported in Figure \ref{fig:num-mc}. We see that DPRGT performs slightly better than DRGTA for both cases. Similarly, DPRGD can achieve lower objective function values compared to DRDGD.
\begin{figure}[htp]
	\centering
	\includegraphics[width = 0.45\textwidth]{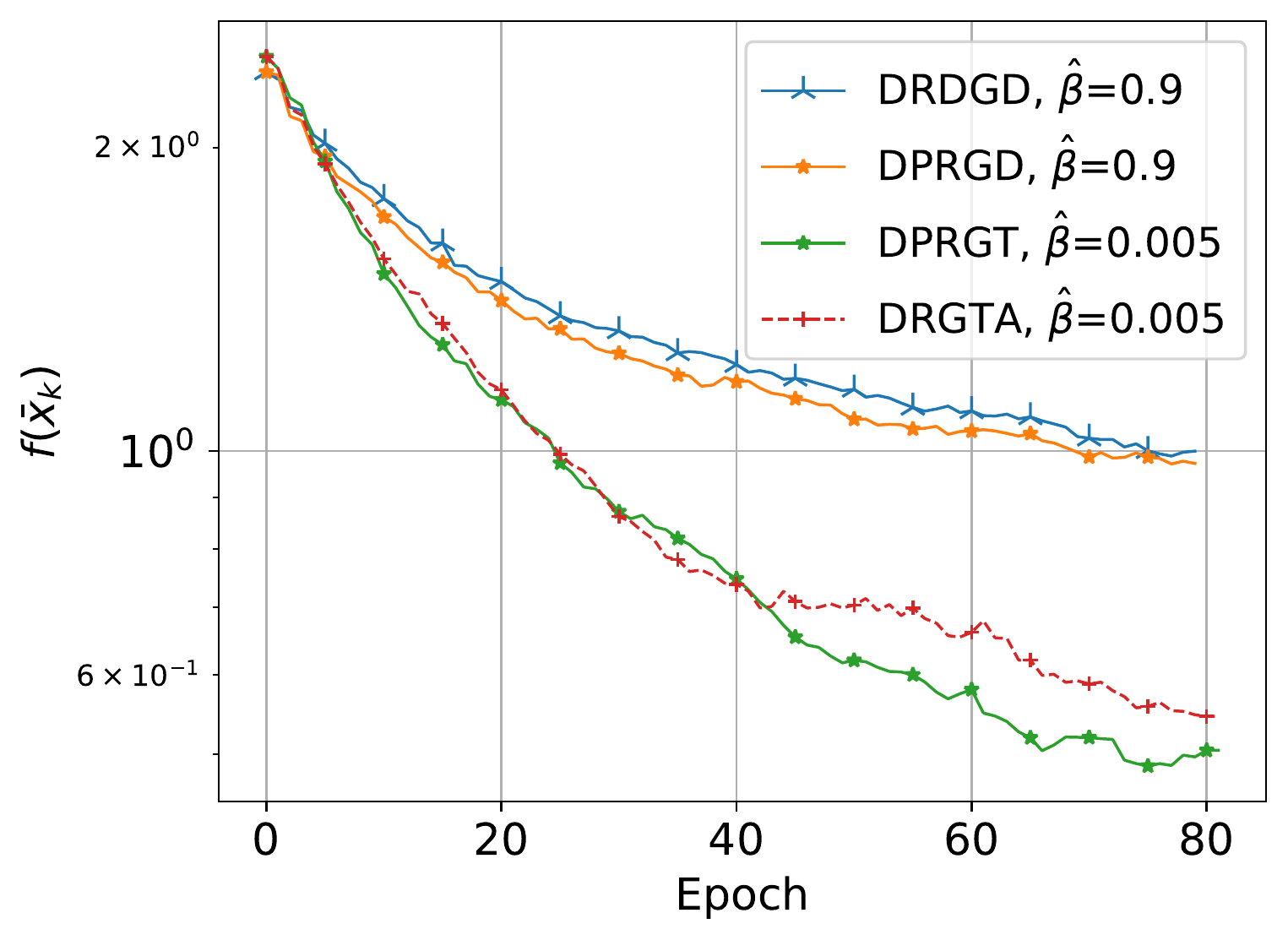}
    \includegraphics[width = 0.45\textwidth]{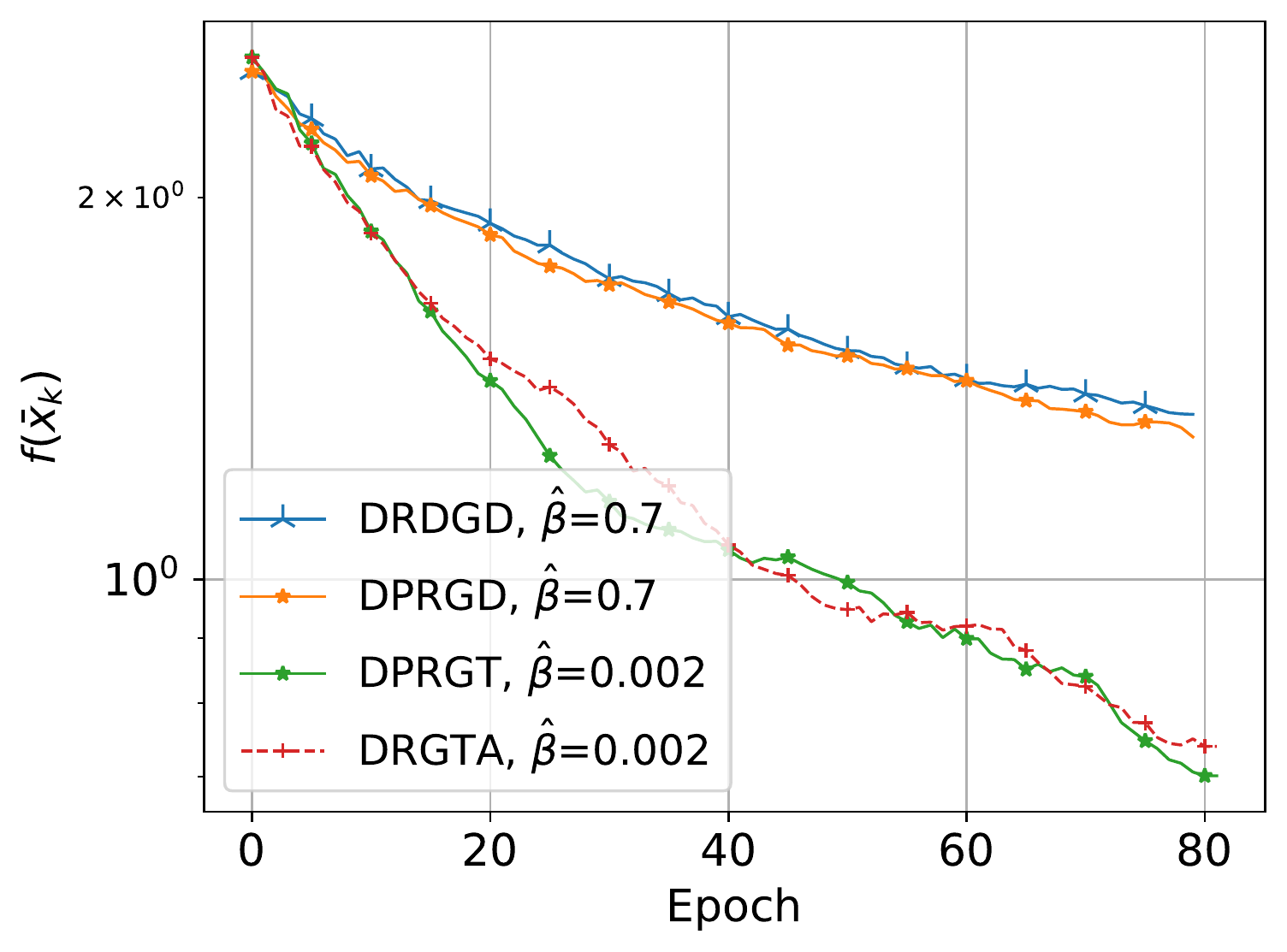}
	\caption{Numerical results for the decentralized LRMC problem with the Ring graph. Left: $n=16$, right: $n=32$.}
	\label{fig:num-mc}	
\end{figure}

\section{Conclusion}
We propose two distributed algorithms for solving decentralized optimization problems over a compact submanifold. To analyze these algorithms, we study the linear convergence of the projected gradient method with unit step size for solving the consensus problem, utilizing the proximal smoothness of the compact submanifold and analyzing several essential Lipschitz-type inequalities. By putting these together, we present the complexity results of the proposed algorithms, which match the best-known results in the Stiefel manifold case \cite{chen2021decentralized}. 
Numerical results demonstrate the effectiveness of our proposed algorithms. It is worth mentioning that the linear convergence of the projected gradient method and the Lipschitz-type inequality established here can be applied to analyze more general manifold optimization and decentralized optimization problems, e.g.,  problem \eqref{prob:original} with weakly convex $f_i$ \cite{wang2023decentralized,peng2022riemannian,wang2023smoothing} and decentralized composite optimization with strongly prox-regular regularizer \cite{hu2023projected}.  

\bibliographystyle{siamplain}
\bibliography{ref}
\end{document}